\documentclass[reqno,11pt,letterpaper]{amsart}
\usepackage{amsxtra,amscd,amsthm,amsmath,amssymb}
\usepackage{longtable} 
\usepackage{enumerate}
\usepackage{verbatim}
\usepackage{graphicx}
\usepackage{xspace}
\usepackage{ifthen}
\usepackage{tabularx}

\newcommand{\quash}[1]{}  

\quash{ 
 \textwidth 5.56in
  \textheight 8.41in
 \oddsidemargin 0.53in
 \evensidemargin 0.53in 
 \topmargin -0.10in
   
}

\usepackage{amsthm,amsfonts,amssymb,amsmath,amsxtra}
\usepackage[all]{xy}
\usepackage{color}
\usepackage{mathrsfs}

\makeatletter
\renewcommand*\env@matrix[1][*\c@MaxMatrixCols c]{%
  \hskip -\arraycolsep
  \let\@ifnextchar\new@ifnextchar
  \array{#1}}
\makeatother

\usepackage{flexisym}
\usepackage[utf8]{inputenc}
\usepackage{amsfonts}
\usepackage{multicol}
\usepackage{collectbox}

\usepackage{flexisym}
\usepackage{amsfonts}
\usepackage{amsmath}

 \usepackage{hyperref}

\usepackage{multicol}
\usepackage{collectbox}


\usepackage{amsthm}

\theoremstyle{plain}
\numberwithin{equation}{subsection}
\newtheorem{theorem}[equation]{Theorem}

\newtheorem{proposition}[equation]{Proposition}
\newtheorem{lemma}[equation]{Lemma}

\newtheorem{Remark}[equation]{Remark}

\newtheorem{conjecture}[equation]{Conjecture}
\newtheorem{corollary}[equation]{Corollary}

\theoremstyle{remark}

\theoremstyle{plain}
\theoremstyle{definition}\newtheorem*{notation}{Notation}

\newcommand{\nc}{\newcommand}
\nc{\on}{\operatorname}

\renewcommand{\phi}{\varphi}

\newcommand{\bbA}{{\mathbb A}}

\newcommand{\bbF}{{\mathbb F}}

\newcommand{\bbQ}{{\mathbb Q}}

\newcommand{\bbZ}{{\mathbb Z}}

\newcommand{\calF}{{\mathcal F}}
\newcommand{\calG}{{\mathcal G}}

\newcommand{\calS}{{\mathcal S}}

\newcommand{\ov}{\overline}

\nc{\al}{{\alpha}} \nc{\be}{{\beta}}

\nc{\ve}{{\varepsilon}} \nc{\Ga}{{\Gamma}}

\nc{\La}{{\Lambda}}

\def\0{\circ}

\newcommand{\cal}{\mathcal}

\newcommand{\A}{{\mathcal A}}

\newcommand{\Q}{{\mathbb Q}}

\newcommand{\Hom}{{\rm Hom}}

\newcommand{\Z}{{\mathbb Z}}

\newcommand{\M}{{\mathcal M}}

\newcommand{\K}{{\mathcal K}}

\def\thfill{\null\nobreak\hfill}

\def\endproof{\thfill\vbox{\hrule
  \hbox{\vrule\hbox to 5pt{\vbox to 5pt{\vfil}\hfil}\vrule}\hrule}}

\newcommand{\lan}{\langle}
\newcommand{\ran}{\rangle}

\newcommand{\BL}{{\mathbb {L}}}

\newcommand{\bb}{\mathbb}

\newcommand{\op}{\operatorname}
\newcommand{\quis}{\stackrel{\sim}{\to}}

\newcommand{\To}{\longrightarrow}
\newcommand{\xto}{\xrightarrow}

\renewcommand{\cal}{\mathcal}
\newcommand{\indlim}{\varinjlim}

\quash{
\documentclass{article}
\usepackage{amssymb}
\usepackage{amsfonts}
\usepackage{amsmath}

\setcounter{MaxMatrixCols}{10}

\newtheorem{theorem}{Theorem}

\newtheorem{corollary}[theorem]{Corollary}

\newtheorem{lemma}[theorem]{Lemma}

\newtheorem{proposition}[theorem]{Proposition}

\newenvironment{proof}[1][Proof]{\noindent\textbf{#1.} }{\ \rule{0.5em}{0.5em}}
}

\newcommand{\QTP}[1]{}

\newcommand{\RK}{\mathrm{K}}
\newcommand{\RG}{\mathrm{G}}
\newcommand{\RF}{\mathrm{F}}
\newcommand{\RNF}{\mathrm{NF}}
\newcommand{\RNK}{\mathrm{NK}}
\newcommand{\NK}{\mathrm{NK}}

\newcommand{\BZ}{\mathbb{Z}}

\newcommand{\HC}{\mathrm{HC}}

\newcommand{\prolim}{\varprojlim}

\newcommand{\wh}{\widehat}

\newcommand{\BF}{{\mathbb F}}

\newcommand{\rSK}{{\mathrm{SK}}}

\begin{document}

\title{The  Nil $\RK$-groups of finite groups}

 
 \author[T. Chinburg]{T. Chinburg}

\author[G. Pappas]{G. Pappas}

\author[M. Taylor]{M. J. Taylor  \\ \\ { {\scriptsize with  an Appendix by}} M. Morrow}
 \thanks{2000  MSC: Primary 19D35; Secondary  19B28, 19D50, 19D55}

 \thanks{T.~C. was supported in part by Simons Foundation grant No. MP-TSM-00002279 and by NSF FRG grant DMS-2411702. G.~P. was supported in part by Simons Foundation grant SFI-MPS-TSM-00013296. M.~T. was supported by a grant from Merton College, Oxford.
M.~M was supported in part by the European Research Council (ERC) under the European Union’s Horizon 2020 research and innovation programme (grant agreement No. 101001474).}

\address{  Dept. of
Mathematics\\
Univ. of Pennsylvania\\
Philadelphia\\
PA 19104\\
USA} \email{ted@upenn.edu}

\address{ 
Dept. of
Mathematics\\
Michigan State
Univ.\\
E. Lansing\\
MI 48824\\
USA}
\email{pappasg@msu.edu}

\address{ Merton College\\ University of Oxford\\ OX1 4JD\\ U.K. }
\email{martin.taylor@merton.ox.ac.uk}
 
\address{ CNRS \& Laboratoire de Mathématiques d’Orsay\\
Faculté des Sciences d’Orsay, Université Paris-Saclay\\
F-91405 Orsay Cedex\\
France}
\email{matthew.morrow@universite-paris-saclay.fr}

\date{\today}

\begin{abstract}

We show that for every finite group $G$ and every integer $n$, the Nil group $\RNK_n(\BZ[G])$ of the integral group ring
 of $G$ has finite exponent, and give a bound depending on $n$ and the order $|G|$. This bound is explicit when 
  every prime divisor of $|G|$ is at least $3+n/2$.
 More generally, our finite exponent result applies to $\RNK_n(R[G])$   whenever $R$ is a regular, torsion-free, Noetherian commutative ring such that $R/p$ is regular for every prime $p$ that divides $|G|$.
 In the  Appendix, M. Morrow 
provides an alternative approach and also shows that 
the Nil group $\RNK_n(X)$ of an excellent Noetherian scheme $X$, with $X[1/p]$ regular,
is annihilated by a finite power of $p$, provided $X$ admits a suitable resolution of singularities. 
By extending his argument to certain noncommutative rings, we also show that, for $R$ as above and $\alpha$ any automorphism of $G$, the Farrell Nil groups $\RNK_n(R[G],\alpha)$ have finite exponent. As a consequence, for every virtually cyclic group $\Gamma$, the group $\RK_n(\BZ[\Gamma])$ is a direct sum of a finitely generated abelian group and an infinite countable direct sum of copies of a finite abelian group.
\end{abstract}
 
 \maketitle

\tableofcontents
 
   \maketitle

\date{\today}

 \section{Introduction} 
 
 \subsection{}  
 
The Nil $\RK$-groups of finite groups are among the most ubiquitous and poorly understood terms appearing in the algebraic $\RK$-theory of group rings. They arise already in the Bass–Heller–Swan decomposition for direct products with $\BZ$, and their twisted analogues occur naturally in the
 $\RK$-theory of group rings of virtually cyclic groups. These Nil groups are torsion and, when nontrivial, infinitely generated, but little else has been shown about their structure. The main result of this paper shows that, despite this nonfiniteness, the orders of their elements are uniformly bounded.
This leads to a description of the structure of the $\RK$-groups of the integral group ring  for every virtually cyclic group $\Gamma$; see Theorem \ref{cor2_Intro} below.

 \subsection{}   Let $G$ be a finite group and  denote by $\BZ[G]$ the group ring with integral coefficients. Fix an integer $n$. The subject of this paper is the Nil $\RK$-group $\RNK_n(\BZ[G])$, which can be defined as the kernel
 \[
 \RNK_n(\BZ[G])=\ker(\RK_n(\BZ[G][t])\xrightarrow{t\mapsto 0} \RK_n(\BZ[G])),
 \]
where $ \BZ[G][t]$ is the polynomial ring with coefficients $\BZ[G]$ and variable $t$. 
 More generally, for $\alpha: G\to G$ an automorphism, we also consider the Farrell Nil $\RK$-group  
  \[
\RNK_n(\BZ[G], \alpha):=   \ker(\RK_n(\BZ[G]_\alpha[t])\xrightarrow{t\mapsto 0} \RK_n(\BZ[G])).
  \]
 Here, $\BZ[G]_\alpha[t]$ is the skew polynomial ring with coefficients in $\BZ[G]$, variable $t$, and multiplication satisfying $t\cdot g = \alpha(g)t$, for all $g\in G$. When $\alpha={\rm id}$ one has $\RNK_n(\BZ[G], {\rm id})=\RNK_n(\BZ[G])$.

 The groups $ \RNK_n(\BZ[G])$ and $\RNK_n(\BZ[G], \alpha)$ are objects of classical interest with numerous connections to topology; see the recent survey by W.~L\"uck \cite{Luck}.
Indeed,   the Laurent polynomial ring $\BZ[G][t, t^{-1}]$ is the integral group ring of the direct product $ G\times \BZ$. Hence, the Nil group $\RNK_n(\BZ[G])$ is very closely related to the $\RK$-groups of the group ring of the ``infinite virtually  cyclic group" $\Gamma=G\times \BZ$ by the Bass--Heller--Swan formula; see Theorem \ref{thm:BHS}.
Similarly, $\RNK_n(\BZ[G], \alpha)$ features in the calculation of the $\RK$-groups of the group ring of the infinite virtually  cyclic group $\Gamma=G\rtimes_\alpha \BZ$.
In particular,  the first Nil group $ \RNK_1(\BZ[G],\alpha)$ relates to the Whitehead group ${\rm Wh}(\Gamma)$ of  $\Gamma=G\rtimes_\alpha \BZ$; see \cite{FaHs}.
The Whitehead group ${\rm Wh}(\Gamma)$ of a group $\Gamma$ is the receptacle of 
 ``Whitehead torsion": an element in  ${\rm Wh}(\Gamma)$ measures the obstruction for a homotopy equivalence between connected finite CW complexes with 
fundamental group $\Gamma$ to be a simple homotopy equivalence.
It also describes (by the $s$-cobordism theorem) the obstruction to exhibiting an $h$-cobordism between two manifolds in dimension $\geq 5$ with fundamental group $\Gamma$ as a product cobordism. With related topological applications in mind, 
such Nil groups have been studied by Farrell, Farrell--Hsiang,  Farrell--Jones and others.  For instance, the Farrell--Jones conjecture predicts, very roughly, that the $\RK$-groups of virtually cyclic groups provide the building blocks for the $\RK$-theory of all groups. This emphasizes the importance of understanding 
Nil groups like $ \RNK_n(\BZ[G])$ and $\RNK_n(\BZ[G], \alpha)$. For additional information, we refer the reader to \cite{Luck} and the references therein.

Nevertheless, the groups  $\RNK_n(\BZ[G], \alpha)$ have proven hard to understand and, so far, there have been only a few general results. It is known that
$ \RNK_n(\BZ[G], \alpha)$ is a torsion group. If nonzero, it is \emph{not} finitely generated: this follows by extensions of a construction of Farrell \cite{Fa},  see \cite{LPW} and the references therein. 

\subsection{}  We first discuss the ``untwisted case" $\RNK_n(\BZ[G])$, i.e. $\alpha={\rm id}$.
A few partial results determining these groups have been obtained in special  cases, mostly for $n=0$ or $1$. For example, for $p$ a prime, if $p^2\nmid |G|$ and $n\leq 1$,  then the localization $\RNK_n(\BZ[G])_{(p)}$ is trivial, see \cite{HLuck}. In particular, if the order $|G|$ is square-free, then $\RNK_1(\BZ[G])=(0)$, see also \cite{Harmon}. Weibel \cite{WNK} calculated $\RNK_0(\BZ[G])$ and $\RNK_1(\BZ[G])$ for  $G=C_4$, $C_2\times C_2$, and (almost for) $D_4$; these Nil groups are not trivial. 
 
By Weibel \cite{WMV} (Cor. \ref{cor:localization}, see also  Hambleton-L\"uck \cite{HLuck} and Kuku--Tang \cite{KukuTang}), 
every prime dividing the order of an element of $\RNK_n(\BZ[G])$ divides the order $|G|$ of $G$. 
However, there was no known uniform bound on the powers of these primes that could occur in the orders of elements. A basic question which had remained unanswered was whether there exists a \emph{finite exponent} for the  group $\RNK_n(\BZ[G])$, i.e. a positive integer which annihilates all  elements. In this paper, we give an affirmative answer.

\begin{theorem}\label{ZresultIntro}
For every finite group $G$ and $n\geq 0$, the group $\RNK_{n}(\BZ[G])$ has finite exponent.
\end{theorem}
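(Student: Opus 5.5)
We reduce the theorem to $p$-primary statements, then compare $\BZ[G]$ with a regular ring via a conductor square and use the $p$-adic behaviour of Nil groups of $\BF_p$-algebras together with pro-excision.

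\textbf{Reduction to one prime at a time.} By Weibel's localization result (Cor.~\ref{cor:localization}), $\RNK_n(\BZ[G])$ is a torsion group and every prime dividing the order of one of its elements divides $|G|$. So it suffices to show, for each prime $p\mid |G|$, that the $p$-primary part is annihilated by some fixed power $p^{N}$, with $N$ depending only on $n$ and $|G|$.

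\textbf{Milnor square.} Let $\mathfrak M\supset \BZ[G]$ be a maximal $\BZ$-order in $\BQ[G]$. Then $|G|\mathfrak M\subset \BZ[G]$, so we have a conductor square
\[
\begin{CD}
\BZ[G] @>>> \mathfrak M\\
@VVV @VVV\\
\BZ[G]/I @>>> \mathfrak M/I
\end{CD}
\]
with $I=|G|\mathfrak M$ and finite quotients. Since $\mathfrak M$ is hereditary, hence regular, $\RNK_*(\mathfrak M)=0$, and likewise after passing to polynomial rings.

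\textbf{Bounding the failure of excision.} This square need not be $\RK$-excisive. The discrepancy is measured by birelative $\RK$-groups, which, after applying $\RNK$, control $\RNK_n(\BZ[G])$ up to $\RNK_{n}$ and $\RNK_{n+1}$ of the finite rings $\BZ[G]/I$ and $\mathfrak M/I$. Two inputs are needed to bound everything:

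1. For a finite (more generally $p$-power torsion) ring $A$ of characteristic $p^k$, $\RNK_n(A)$ is a $p$-group of bounded exponent. By Weibel, $\RNK_n(A)$ is a module over the big Witt vectors $W(\BZ/p^k)$, and it is annihilated by a bounded power of $p$ controlled by $k$. One proves this through the big Witt module structure, reducing nilpotently via Goodwillie-type arguments to characteristic-$p$ rings, where the Witt vectors have bounded exponent.

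2. The birelative terms are bounded by the Geisser–Hesselholt / Cortiñas description: birelative $\RK$ agrees with birelative $\mathrm{TC}$ $p$-adically, or rationally with cyclic homology. Since the birelative groups here are $p$-torsion $\RNK$-pieces, one estimates them with birelative $\mathrm{THH}$/$\mathrm{TC}$ of the pro-system $I^m$ (pro-excision). That yields a bounded exponent once $m$ is large enough relative to $n$.

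\textbf{Alternative route and expected obstacle.} Instead of the Milnor square, one can work $p$-adically: pass to $\BZ_p[G]$ and use that $\RNK_n$ is $p$-adically controlled by $\mathrm{TC}$ of $\BZ_p[G][t]$ modulo $t$-terms, since the Dundas–Goodwillie–McCarthy theorem applies to the nilpotent extension $\BZ_p[G]\to \BF_p[G]/\mathrm{rad}$. The main obstacle is step 2: getting a \emph{uniform} exponent bound on the birelative (or $\mathrm{NTC}$) terms. $\mathrm{NTHH}$ of a $\BZ/p^k$-algebra is an infinite sum of pieces indexed by $t$-degree, and the bound must be uniform in degree. I would try the Witt-vector/Verschiebung filtration on $\RNK$ (Weibel's $W(\BZ)$-module structure), showing that $p^{N}$ kills each graded piece with $N$ independent of degree. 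This uses the $p$-adic valuation of $|G|$ to bound the nilpotence degree of the relevant ideals.
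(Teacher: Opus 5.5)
Your skeleton is the right one: a regular maximal order $\mathfrak M$ with $|G|\mathfrak M\subset \BZ[G]$, pro-excision, and Nil groups of finite quotient rings. But both of your key inputs are either wrongly justified or left open, so as written this is not a proof.

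\textbf{Input 1.} Neither the big nor the $p$-typical Witt vectors of $\BF_p$ have bounded exponent, since $W(\BF_p)$ contains $\BZ_p$. Weibel's continuous Witt-module structure only shows that \emph{each element} of $\RNK_n(A)$ is killed by some power of $p$. Goodwillie's theorem is rational. Neither gives a uniform bound. The claimed dependence on the characteristic alone is also false. For $A=\BF_p[x]/(x^e)$, the unit $1+xt$ is an element of $\RNK_1(A)$ of order $p^{\lceil\log_p e\rceil}$. For $\BF_p[x_1,x_2,\dots]/(x_i^{p^i})$ there is no bound at all.

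What is true, and is all you need, is the following: for each \emph{finite} ring $A$ (such as $\BZ[G]/I^r$ or $\mathfrak M/I^r$), the group $\RNK_n(A)$ is killed by some power of $|G|$, with the power allowed to depend on $A$. The tool is Geisser--Hesselholt's theorem (Theorem~\ref{GH_app}), applied to $A[t]\supset J[t]$ with $J$ the nilpotent Jacobson radical. Combine this with $\RNK_n(A/J)=0$, which holds because $A/J$ is semisimple, hence regular.

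\textbf{Step 2.} You identify a uniform bound on the birelative (or $\mathrm{NTC}$) terms as the main obstacle and leave it unresolved. Pro-excision gives pro-vanishing of the birelative groups, not a bound on any single one of them, so it does not settle this as you suggest. You also cannot work with the single quotient by $I$; you must pass to the tower $I^r$. The missing idea is that no uniform bound is needed.

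Since $I=|G|\mathfrak M$ is generated by a central element, pro-excision says the pro-groups $\{\RK_n(\BZ[G][t],\mathfrak M[t],I^r[t])\}_r$ vanish. Together with $\RNK_*(\mathfrak M)=0$, this yields an exact sequence of pro-abelian groups
\[
\{\RNK_{n+1}(\mathfrak M/I^r)\}_r\to \RNK_n(\BZ[G])\to \{\RNK_n(\BZ[G]/I^r)\}_r
\]
whose middle term is \emph{constant}. Lemma~\ref{lemma_app} then shows that the middle term is killed by a power of $|G|$ as soon as each individual term of the outer systems is, even when those bounds grow with $r$. This is exactly the argument of \S\ref{s:AppendixB} with $\alpha=\mathrm{id}$, and of Theorem~\ref{thm1_app} for cyclic $p$-groups.

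The paper's main proof takes a different route. It reduces to cyclic $p$-groups via Hambleton--L\"uck and Artin induction, then bounds the two localization pieces explicitly using the $p$-adic Beilinson isomorphism and Hesselholt--Madsen.

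\textbf{Alternative route.} This misapplies Dundas--Goodwillie--McCarthy. The kernel of $\BZ_p[G]\to\BF_p[G]/\mathrm{rad}$ contains $p$ and is not nilpotent. Passing to that quotient would require continuity or henselian rigidity results, not DGM.
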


Note that the groups $\RNK_n(\BZ[G])$ are trivial for $n<0$, by work of Farrell--Jones \cite{FaJo}. For $n=0$, Theorem \ref{ZresultIntro}  was already known by work of Connolly--Prassidis \cite{ConPra}. 

Combining Theorem \ref{ZresultIntro}  with work of Lafont--Prassidis--Wang \cite[Theorem C]{LPW}, which is based on Farrell's construction,  we obtain

 \begin{theorem}\label{thm:Countable}
For every finite group $G$ and $n\geq 0$, there is a finite abelian group $H$, whose exponent divides a power of  $|G|$, such that 
$\RNK_{n}(\BZ[G])$ is isomorphic to the infinite countable direct sum $ \oplus_\infty H$ of copies of $H$.
\end{theorem}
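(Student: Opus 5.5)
The plan is to deduce Theorem \ref{thm:Countable} formally from Theorem \ref{ZresultIntro} together with the structural input of Lafont--Prassidis--Wang \cite[Theorem C]{LPW}. The relevant form of that result is the following. If $\RNK_n(\BZ[G])$ is a nontrivial group of finite exponent, then it is isomorphic to $\oplus_\infty H$ for some finite abelian group $H$. This rests on two facts. First, $\RNK_n$ is a countable module over the ring of big Witt vectors $W(\BZ)$, via the Frobenius and Verschiebung operators coming from Farrell's construction. Second, a countable abelian group of bounded exponent is a direct sum of cyclic groups by Prüfer's first theorem. Step one is therefore to check the hypotheses: $\RNK_n(\BZ[G])$ is countable, because $\BZ[G][t]$ is a countable ring and its $\RK$-groups are countable, and it has finite exponent $N$ by Theorem \ref{ZresultIntro}. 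If $\RNK_n(\BZ[G])=0$ we take $H=0$, so we assume it is nonzero.

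Step two applies Prüfer--Baer. A bounded abelian group is a direct sum of cyclic groups of prime power order $p^k$ with $p^k \mid N$. So $\RNK_n(\BZ[G])\cong \oplus_{p^k\mid N} (\BZ/p^k)^{(\kappa_{p,k})}$ for cardinals $\kappa_{p,k}\le\aleph_0$. To get the form $\oplus_\infty H$, we need each nonzero $\kappa_{p,k}$ to be infinite. We then take $H=\oplus_{\kappa_{p,k}\neq 0}\BZ/p^k$, which is finite since only finitely many pairs $(p,k)$ occur, and use $\aleph_0\cdot 1=\aleph_0$. This is exactly the content of \cite[Theorem C]{LPW}. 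We can also see it directly, since $\kappa_{p,k}$ is the $\BF_p$-dimension of $p^{k-1}A[p]/p^{k}A[p]$ for $A=\RNK_n(\BZ[G])$. Farrell's Frobenius trick shows that any nonzero subquotient of this functorial kind is infinite. Concretely, the Verschiebung maps $V_m$ for suitable $m$ prime to $|G|$ are injective, and the maps $t\mapsto t^m$ embed infinitely many independent copies of any given element. These operations commute with multiplication by $p$, so they preserve the Ulm-type invariants.

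Step three controls the exponent. By Weibel's result (Cor. \ref{cor:localization}), every prime dividing the order of an element of $\RNK_n(\BZ[G])$ divides $|G|$. Hence only primes $p\mid |G|$ occur, and the exponent of $H$, which equals that of $\RNK_n(\BZ[G])$, divides a power of $|G|$.

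The only substantive issue is the infinitude of each nonzero $\kappa_{p,k}$, i.e. the input from \cite{LPW}. I would cite it directly rather than reprove it. If a self-contained argument were needed, I would use the $W(\BZ)$-module structure and the injectivity of $V_m\colon \RNK_n\to\RNK_n$ for primes $m\nmid |G|$ combined with $F_mV_m=m$, which is invertible on the $p$-part. This makes $\oplus_m V_m$ an injection from a countable sum of copies of the $p^k$-socle layer into itself.
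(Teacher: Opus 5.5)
Your proposal is correct and follows the paper's route: finite exponent from Theorem \ref{ZresultIntro}, the $\oplus_\infty H$ structure from \cite[Theorem C]{LPW}, and the restriction to primes dividing $|G|$ from Weibel's localization result (Corollary \ref{cor:localization}), or equivalently from the explicit annihilator in Theorem \ref{thm:general}. (Two small points on your optional self-contained sketch: proving that the $V_m$-translates are independent also requires the fact that $F_m$ kills any fixed element for $m$ large, together with $F_lV_m=V_mF_l$ for coprime $l,m$; and the Ulm invariant should be written $(p^{k-1}A)[p]/(p^kA)[p]$.)
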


In particular, despite their nonfiniteness, these Nil groups have a strikingly rigid structure.

The proof brings recent advances in $\RK$-theory stemming from the introduction of trace methods to bear on this classical problem about integral group rings. Key tools
  are  pro-excision as developed by Geisser--Hesselholt \cite{GHbi} and Morrow \cite{Morrowpro2}, and the $p$-adic Beilinson fiber square construction of Antieau--Mathew--Morrow--Nikolaus \cite{NikolausBFSq}. 

Our methods apply to more general coefficient rings. We show:

\begin{theorem}\label{thm:MainIntro}
Suppose $G$ is a finite group, $n\geq 0$, and $R$ is a commutative ring
which is Noetherian, regular, and flat over $\BZ$. Suppose that for all primes $p$
dividing $|G|$  the quotient $R/p$ is regular.   Then there exists a positive integer  which  annihilates $\RNK_n(R[G])$ and depends only on $n$, $G$, and the set of primes  that  are not units in $R$. 
\end{theorem}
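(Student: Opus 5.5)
Write $A=R[G]$ and $N=|G|$. The plan is to localize at one prime at a time, and then handle each prime by pro-excision combined with trace methods.

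\emph{Step 1: reduction to primes dividing $N$.} I would first invoke Weibel's localization result (Cor.~\ref{cor:localization}). Over $A[1/N]$, the ring $R[1/N][G]$ is regular, because $R$ is regular and $N$ is invertible; hence $\RNK_*(A[1/N])=0$. The Nil group $\RNK_n(A)$ is a module over the big Witt vectors $W(R)$, and its localization away from $N$ vanishes. So $\RNK_n(A)$ is $N$-primary torsion. It then suffices to show, for each prime $p\mid N$, that $\RNK_n(A)\otimes\BZ_{(p)}$ is killed by $p^{c}$, where $c$ depends only on $n$, $G$ and $p$. If $p$ is a unit in $R$, the $p$-part is already zero. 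This is why the bound depends only on the set of non-unit primes.

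\emph{Step 2: pro-excision to a regular ring.} Fix $p$. Choose a hereditary-type overorder: take the maximal order $\Gamma\supset A$ in $R[1/N][G]$, or better $A\subset \Gamma=\prod R'_i$-matrix algebras after a suitable extension, with $N\Gamma\subset A$. Let $I=N\Gamma$. Then $A/I^m\to\Gamma/I^m$ gives a Milnor-type square, and the pro-excision theorem of Geisser--Hesselholt and Morrow shows that the birelative pro-groups $\{\RK_n(A,\Gamma,I^m)\}_m$ vanish. The same holds after adjoining $t$. Hence $\RNK_n(A)$ fits, up to pro-isomorphism, into a long exact sequence with $\RNK_n(\Gamma)$, $\{\RNK_n(A/I^m)\}$ and $\{\RNK_n(\Gamma/I^m)\}$. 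To control $\RNK_n(\Gamma)$, I would use the regularity hypothesis on $R/p$ to arrange that the relevant $p$-adic pieces of $\Gamma$ are regular; this may require working $p$-adically, with $\Gamma\otimes\BZ_p$ Morita equivalent to regular rings. This forces $\RNK_n(\Gamma)$ to vanish $p$-locally. Everything then reduces to bounding the exponent of $\RNK_n$ of the finite-level rings $A/p^m A$ and $\Gamma/p^m$, uniformly in $m$.

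\emph{Step 3: exponent bound for $p$-torsion quotients.} For $B$ an $\BZ/p^m$-algebra that is a nilpotent thickening of a regular $\BF_p$-algebra (for example $R/p[G]$ modulo its radical), I would bound $\RNK_n(B)$ using the Beilinson fiber square of Antieau--Mathew--Morrow--Nikolaus. After $p$-completion, $\RK(B)\to \mathrm{TC}(B)$ and $\RK(\mathrm{red})$ are compared through cyclic homology $\HC(-/\BQ_p)$-type terms. For $\BF_p$-algebras, $\mathrm{TC}$ of a polynomial thickening has Nil terms computed by de Rham–Witt type objects. These are annihilated by $p^{c}$, with $c$ depending only on $n$ and on the nilpotence length, which is bounded by $v_p(N)$ and $n$. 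This would give an explicit bound when $p\ge 3+n/2$, since the Beilinson-square comparison is then an isomorphism in the relevant range. For small $p$ one only gets existence of a bound.

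\emph{Main obstacle.} I expect the hardest step to be Step 2, namely making the bound uniform in $m$ while passing from pro-statements to an honest exponent on $\RNK_n(A)$. Pro-vanishing only says that each class dies after increasing $m$, so I must show the pro-systems $\{\RNK_n(A/I^m)\}$ have a uniform exponent, independent of $m$. My first attempt would be a $W(R)$-module argument: Verschiebung and Frobenius, together with the $p$-typical decomposition, should bound the exponent independently of $m$.
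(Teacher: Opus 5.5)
There is a genuine gap, and it sits exactly at the point you flag as the main obstacle: the plan you propose there cannot succeed.

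You reduce everything to bounding $\RNK_n(A/p^rA)$ and $\RNK_n(\Gamma/p^r)$ uniformly in the level $r$ (your $m$). In Step 3 you also assert that the relevant nilpotence length is bounded in terms of $v_p(N)$ and $n$. Neither holds. The element $p$ has nilpotence index $r$ in $A/p^rA$, and the exponents really do grow with $r$. For example, $R/p^r$ is a retract of $R[G]/p^r$, so $\RNK_1(R/p^r)$ is a direct summand of $\RNK_1(R[G]/p^r)$. It contains the unit $1+pt$. Since $R$ is $\BZ$-flat and $p\notin R^*$, the linear coefficient $p^{k+1}$ of $(1+pt)^{p^k}$ is nonzero in $R/p^r$ for $k\le r-2$, so this class has order $p^{r-1}$. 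No Witt-vector, Frobenius or Verschiebung argument can give an $r$-independent exponent for these levelwise groups. Step 3 also misapplies the Beilinson comparison: the isomorphism of Theorem \ref{masterprop} is for $p$-adically complete, $p$-torsion-free rings, not for $\BZ/p^r$-algebras.

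The missing idea is to avoid needing bounds at all levels at once. The paper first reduces, via Hambleton--L\"uck and Artin induction (Lam's theorem with Frobenius reciprocity), to $G$ cyclic of order $p^m$, so that $M_G$ is commutative. It then passes to continuous $\RK$-theory $\prolim_a$ and uses the localization fiber sequences for the ideal $(p)$ in $\wh\A$ and $\wh\M$ (diagram \eqref{CDfib2}, sequence \eqref{pinFexact}). After that, only the mod-$p$ rings $R[G]/p\simeq (R/p)[x]/(x^{p^m})$ and $M_G\otimes_\BZ R/p$ need levelwise bounds. These come from Hesselholt's de Rham--Witt computation (Proposition \ref{lem:dumbbound}).

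The part relative to $(p)$ is never bounded term by term; only its fiber along $f\colon\wh\A\to\wh\M$ is bounded. The $p$-adic Beilinson isomorphism identifies this fiber, up to quasi-isogenies whose constants depend only on $p$ and $n$, with $\HC_{n-1}(f,(p);\BZ_p)$. That group is killed by $p^{(n+1)m}$, because the fiber complex is built from the quotients $(\wh\M)^{\hat\otimes k}/(\wh\A)^{\hat\otimes k}$ with $k\le n+1$, and $p^m\wh\M\subset\wh\A$. This conductor argument is what makes the exponent independent of $R$.

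Your worry that pro-vanishing only kills each class at some level is also misplaced. If $\{A_r\}\to B\to\{C_r\}$ is exact in $\op{Pro}\mathrm{Ab}$ with $B$ constant, a single $r$ has $\ker(B\to C_r)$ contained in the image of some $A_t$ (Lemma \ref{lemma_app}). So exponents depending on $r$ already give finite exponent; this is the appendix's route via Geisser--Hesselholt. However, the exponent it yields depends on the rate of pro-vanishing, hence a priori on $R$. It does not give the $R$-independent annihilator that the statement asserts.
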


In fact,   the annihilator in the above theorem  is very explicit when the order $|G|$ is \emph{not} divisible by a ``small" prime, where small here means less than $3+n/2$.
See \S \ref{ss:results} and in particular Theorem \ref{thm:general}. 

\subsection{}\label{ss:introtwisted}  We can also treat general Farrell Nil groups:
  \begin{theorem}\label{FarrellNilThm_Intro}
Let $G$ be a finite group,  $\alpha: G\to G$ an automorphism, and $R$ a commutative ring which  is regular, Noetherian, flat over $\BZ$, and such that $R/p$ is regular for all primes $p$ that divide $|G|$. Then, for any $n\in\BZ$, the group $\NK_n(R[G],\alpha)$ is annihilated by a power of $|G|$.
\end{theorem}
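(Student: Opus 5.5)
Since $R$ is flat over $\BZ$, I would first invert $|G|$. The ring $R[1/|G|][G]$ is regular: it is a separable $R[1/|G|]$-algebra over a regular base. A twisted polynomial extension of a regular ring by an automorphism is again regular, so regularity passes to $R[1/|G|][G]_\alpha[t]$, and $\RNK_n(R[1/|G|][G],\alpha)=0$. To compare the integral Nil group with this vanishing one, I would set $N=|G|$ and, for each prime $p\mid N$, use the Karoubi/Milnor arithmetic square. This square relates $\RK$ of $A=R[G]_\alpha[t]$, of $A[1/p]$, of the $p$-adic completion $\widehat{A}_p$, and of $\widehat{A}_p[1/p]$. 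After $p$-completion it is a homotopy pullback by Weibel's localization and excision for $p$-torsion-free rings (Cor.~\ref{cor:localization}). Since the Nil group is torsion and only primes dividing $N$ occur, it is enough to show that for each such $p$ the $p$-primary part of $\RNK_n(R[G],\alpha)$ is killed by a bounded power of $p$. After $p$-completion this reduces the problem to bounding the exponent of $\RNK_n(\widehat{R}_p[G],\alpha)$ (taken $p$-adically) together with a term coming from $\widehat{R}_p[1/p][G]$. The latter is a regular $\bbQ$-algebra, so its twisted Nil terms vanish.

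The core is therefore a bounded-exponent statement for the $p$-adic ring $S=\widehat{R}_p[G]$ with its automorphism. Write $G=P\rtimes$ or, more simply, use that $S$ contains $\widehat R_p[P]$ for a $p$-Sylow $P$. A transfer argument then shows that $\RNK_n(S,\alpha)$ is a retract of Nil groups over subgroups, up to multiplication by $[G:P]$, which is prime to $p$. Here I would have to handle $\alpha$ not preserving $P$; I would pass to the subgroup generated by the $\alpha$-orbit, or use $\alpha^m$ with $m$ the order of $\alpha$. There is a standard relation between $\RNK(S,\alpha)$ and $\RNK(S,\alpha^m)$ via the ring extension $S_{\alpha^m}[t^m]\subset S_\alpha[t]$, with transfer and restriction composing to $m$ times or to a sum over $\alpha$-conjugates. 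Since $\alpha^m$ is inner or trivial on $G$, I can reduce to the untwisted case up to a factor $m$ dividing $|\mathrm{Aut}(G)|$. Because the statement asks only for annihilation by a power of $|G|$, I must check that the primes introduced this way divide $|G|$. If they do not, this step fails and I would instead work directly with the twisted ring.

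For the untwisted or twisted $p$-adic statement, I would follow the Appendix strategy (Morrow). Take the square-zero-ish ideal $I=\ker(S\to \widehat{R}_p/p[G]/\mathrm{rad})$; more precisely, $S$ has a nilpotent-modulo-$p$ radical $J$ with $S/J$ a product of matrix algebras over regular $\BF_p$-algebras, using that $R/p$ is regular. Pro-excision (Geisser--Hesselholt, Morrow) identifies the relative $p$-adic $\RK$-theory of $S_\alpha[t]$ versus $(S/J^k)_\alpha[t]$ through relative $\mathrm{TC}$. Nil terms of $(S/J)_\alpha[t]$ vanish by regularity, because $\RNK$ of a regular $\BF_p$-algebra is zero. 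The Nil part of relative $\mathrm{TC}$ of a nilpotent extension in characteristic-$p$ style is then controlled by the Beilinson fiber square (Antieau--Mathew--Morrow--Nikolaus), which compares relative $\RK$ with relative cyclic homology $\HC$ over $\bbQ_p$ up to bounded $p$-torsion. Relative $\HC_{n-1}$ of $S[1/p]_\alpha[t]$ relative to $S/J$ has no Nil part rationally, since $S[1/p]$ is separable. Hence the Nil part is entirely torsion coming from the bounded error terms in the fiber square, whose exponent depends only on $n$ and $p$. The main obstacle is this last step: making the Beilinson fiber square and pro-excision work for the noncommutative twisted ring $S_\alpha[t]$ and getting a bound on the error terms that is uniform in the Nil direction, that is, in the $t$-grading. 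I would attack it by working with $\mathrm{TC}$ of the graded ring and using that $t$-degree-$d$ pieces of the bounded-error terms are killed by a fixed power of $p$ independent of $d$, which is where the uniform exponent, a power of $p\mid |G|$, comes from.
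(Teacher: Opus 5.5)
There is a genuine gap, and it sits at the step you call the core. You never compare $R[G]$ with a regular ring sharing an ideal with it. That comparison is what pro-excision needs, and it is the mechanism by which one reaches rings in which $p$ is nilpotent. Without it, the tools you list either do not apply or give no bound:

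\begin{itemize}
\item The radical $J$ of $S=\wh R_p[G]$ contains $p$ and is not nilpotent, so Theorem~\ref{GH_app} is unavailable.
\item The pair $(S[t],J[t])$ is not henselian ($1+pt$ is not a unit), so relative $\RK$ of $S_\alpha[t]$ is not identified with relative $\mathrm{TC}$.
\item Continuous invariants of $S$ alone have Nil parts that are not even torsion. For $G=1$, $R=\BZ$, the determinant maps $\prolim_a\RNK_1(\BZ/p^a)$ onto the non-torsion group $1+pt\,\BZ_p\lan t\ran$.
\item Rational cyclic homology does have a Nil part. For a $\Q$-algebra $A$ one has $\HC_n(A[t])\cong\HC_n(A)\oplus\mathrm{HH}_n(A)\otimes t\Q[t]$, and separability of $S[1/p]$ over its centre does not kill absolute Hochschild homology.
\end{itemize}

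So nothing reduces the problem to ``bounded error terms'' of the Beilinson square. In the paper's untwisted argument, that comparison yields a bound only because it is applied to the difference between $\BZ[G]$ and $M_G$, with $p^mM_G\subset\BZ[G]$ (Proposition~\ref{lemma:HCconductor}). Your arithmetic square does not help either: it merely replaces $R[G]$ by $\wh R_p[G]$, which is still $p$-torsion-free and non-regular.

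The missing idea is the Farrell--Jones order. This is an $\alpha$-invariant hereditary order $\M$ with $\BZ[G]\subseteq\M\subset\Q[G]$ and $q\M\subseteq\BZ[G]$, where $q=|G|$. The paper's argument then runs as follows.

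\begin{itemize}
\item Since each $R/p$ is regular, $\M\otimes_\BZ R$ is left regular (Proposition~\ref{propB}), so its twisted Nil groups vanish (Theorem~\ref{skewhomotopy}).
\item The ideal $I=q\M\otimes_\BZ R$ is central in $\M\otimes_\BZ R$. Artin--Rees then gives pro-Tor unitality, hence pro-excision for $R[G]_\alpha[t]\subset(\M\otimes_\BZ R)_\alpha[t]$.
\item This yields pro-exact sequences $\{\RNK_{n+1}(\M\otimes_\BZ R/q^r,\alpha)\}_r\to\RNK_n(R[G],\alpha)\to\{\RNK_n(R[G]/I^r,\alpha)\}_r$, which involve only rings in which $q$ is nilpotent.
\item Theorem~\ref{GH_app}, applied to the ideals generated by the Jacobson radicals of $\BF_p[G]$ and $\M/p$, reduces to products of rings $\mathrm{Mat}(\BF_{p^f})\otimes_{\BF_p}R/p$. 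These are regular, so their twisted Nil groups vanish. This is where your radical idea belongs.
\item Lemma~\ref{lemma_app} turns the $r$-dependent bounds into a bound for $\RNK_n(R[G],\alpha)$.
\end{itemize}

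This handles arbitrary $G$ and $\alpha$ directly, so your Sylow and $\alpha^m$ reductions are unnecessary. That is fortunate, since as stated they do not work:
\begin{itemize}
\item $\mathrm{Ind}_P^G\mathrm{Res}^G_P$ is multiplication by the class of $R[G/P]$, not by $[G:P]$.
\item $H\mapsto\RNK_n(R[H],\alpha)$ is defined only for $\alpha$-stable $H$.
\item Along $S_{\alpha^m}[t^m]\subset S_\alpha[t]$, only the composite $\RNK_n(S,\alpha^m)\to\RNK_n(S,\alpha)\to\RNK_n(S,\alpha^m)$ has a simple formula (a sum of $\alpha^i$-twists). It gives no control over $\RNK_n(S,\alpha)$.
\end{itemize}
Similarly, the claim that only primes dividing $|G|$ occur is not supplied by Corollary~\ref{cor:localization} in the twisted case. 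In the paper it is an output of the argument, not an input.
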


By \cite[Theorem C]{LPW}, this implies that, for every $n\in\BZ$ and every automorphism $\alpha:G\to G$, there is a finite abelian group $H$ such that  $\RNK_{n}(\BZ[G], \alpha)\simeq \oplus_\infty H$. Combining this with work of Davis--Khan--Ranicki \cite{DKR}, we obtain
\begin{theorem}\label{cor2_Intro}
For every $n\in \BZ$ and every virtually   cyclic group $\Gamma$, the group $\RK_n(\BZ[\Gamma])$ is isomorphic to a direct sum 
$
\Lambda\oplus (\oplus_\infty T)
$
of a finitely generated abelian group $\Lambda$
with an infinite countable direct sum  of copies of a finite abelian group $T$.
\end{theorem}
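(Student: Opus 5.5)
We will deduce Theorem \ref{cor2_Intro} from Theorem \ref{FarrellNilThm_Intro} together with the structure theory of virtually cyclic groups. Recall that a virtually cyclic group $\Gamma$ is either finite, or of the form $\Gamma\simeq G\rtimes_\alpha \BZ$ with $G$ finite (type I), or surjects onto the infinite dihedral group $D_\infty=\BZ/2*\BZ/2$ with finite kernel, so that $\Gamma\simeq G_1*_{G}G_2$ is an amalgamated free product of finite groups along a common index-two subgroup $G$ (type II).

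If $\Gamma$ is finite, $\RK_n(\BZ[\Gamma])$ is finitely generated: for $n\geq 0$ this is the classical theorem of Quillen (finite generation of $\RK$-groups of orders in semisimple $\bbQ$-algebras), and for $n<0$ it follows from Carter's computation. So we take $\Lambda=\RK_n(\BZ[\Gamma])$ and $T=0$.

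In the type I case, the Farrell--Hsiang twisted Bass--Heller--Swan decomposition gives
\[
\RK_n(\BZ[\Gamma])\simeq C_n\oplus \RNK_n(\BZ[G],\alpha)\oplus \RNK_n(\BZ[G],\alpha^{-1}),
\]
where $C_n$ sits in an exact sequence $\RK_n(\BZ[G])\xrightarrow{1-\alpha_*}\RK_n(\BZ[G])\to C_n\to \RK_{n-1}(\BZ[G])$. Hence $C_n$ is finitely generated by the finite case. By Theorem \ref{FarrellNilThm_Intro} with $R=\BZ$ (note that $\BZ$ and each $\BF_p$ are regular), both Nil terms have finite exponent. Then \cite[Theorem C]{LPW} says each of them is either $0$ or isomorphic to $\oplus_\infty H$ for a finite abelian group $H$. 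Since
\[
(\oplus_\infty H)\oplus(\oplus_\infty H')\simeq \oplus_\infty(H\oplus H'),
\]
we obtain the claimed form.

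In the type II case we use Davis--Khan--Ranicki \cite{DKR}. They show that the Waldhausen Nil term in the Mayer--Vietoris type sequence for $\BZ[G_1*_G G_2]$ is isomorphic to a Farrell Nil group $\RNK_n(\BZ[G],\alpha)$ for the canonical index-two subgroup $G\rtimes_\alpha\BZ\subset\Gamma$. Moreover the sequence splits, giving
\[
\RK_n(\BZ[\Gamma])\simeq C'_n\oplus \RNK_n(\BZ[G],\alpha),
\]
where $C'_n$ is an extension built from $\RK_n$ and $\RK_{n-1}$ of $\BZ[G],\BZ[G_1],\BZ[G_2]$. Therefore $C'_n$ is finitely generated, and the Nil summand is handled exactly as in type I.

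The main obstacle is the finite-exponent input for the twisted Nil groups, which is Theorem \ref{FarrellNilThm_Intro}. Granting it, the remaining care is bookkeeping. We must check that the finitely generated parts really split off as direct summands, and not merely appear as subquotients. This is what the Bass--Heller--Swan and \cite{DKR} splittings provide. Without such a splitting, an extension of a finitely generated group by $\oplus_\infty H$ need not decompose.
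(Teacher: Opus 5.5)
Your argument is correct and follows essentially the same route as the paper. You split $\RK_n(\BZ[\Gamma])$ into a finitely generated part built from $\RK$-groups of finite group rings, via the twisted Bass--Heller--Swan decomposition in type I and Waldhausen's splitting plus \cite[Thm 0.1]{DKR} in type II. The remaining summands are one or two Farrell Nil groups, which you handle with Theorem \ref{FarrellNilThm_Intro} for $R=\BZ$ and \cite[Theorem C]{LPW}.

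One small correction of attribution: finite generation of $\RK_n(\BZ[\Gamma])$ for finite $\Gamma$ and $n\geq 0$ is Kuku's extension to arbitrary orders of Quillen's theorem, which itself concerns rings of integers. That is what the paper cites as \cite{Kuku2}, together with Carter \cite{Carter}.
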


Thus, even though the $\RK$-groups of integral group rings of  virtually cyclic groups need
not be finitely generated, all of their ``infinite behavior" is accounted for by countably
many copies of a single finite abelian group.

These  results (see Theorems \ref{thm1_appB} and \ref{cor2_appB} in the text) generalize Theorems \ref{ZresultIntro}, \ref{thm:Countable} and \ref{thm:MainIntro} above. In the genuinely twisted case, i.e. when $\alpha:G\to G$ is not the identity, our method of proof of Theorem \ref{FarrellNilThm_Intro} differs from the one we use for the untwisted case: It is based on an idea described in the Appendix by Morrow, and it does not seem to lead easily to specific bounds for the exponent of $\NK_n(R[G],\alpha)$. Giving such  exponent bounds in general remains an interesting problem.


\subsection{}   Let us  sketch the proof of Theorem \ref{thm:MainIntro}.

We first reduce to the case of a $p$-group $G$ and then, by some  quite general induction arguments, we further reduce to handling the case that $G$ is a cyclic $p$-group. These reductions crucially use the results of Hambleton-L\"uck \cite{HLuck} and of Weibel \cite{WMV}.  

We now assume $G$ is a cyclic $p$-group. Suppose $M_G$ is the maximal order in $\Q[G]$ and set, for simplicity, $\A=R[G][t]$ and $\M=(M_G\otimes_\BZ R)[t]$.
Under our assumptions, $M_G\otimes_\BZ R$ is regular and so $\RK_n(\M)=\RK_n(M_G\otimes_\BZ R)$ by homotopy. We first observe that 
  $\RNK_n(R[G])$ is a subgroup of the kernel  of the natural map $\RK_n(\A)\to \RK_n(\M)$, by the above and the Bass-Heller-Swan formula.
We then use a  homotopy Cartesian diagram of  spectra
 \begin{equation}\label{CDintro}
\begin{aligned}
\xymatrix{
\RK(\A) \ar[r]\ar[d] & \RK(\M) \ar[d]\\
\RK^c(\wh \A)\ar[r] &\RK^c(\wh \M)\\
}
\end{aligned}
\end{equation}
which is obtained in work of Morrow \cite{Morrowpro}, \cite{Morrowpro2}, by using pro-excision.   
Here, we have ($p$-adically) continuous $\RK$-theory
\[
\RK^c(\wh \A):=\prolim_a \RK(\A/p^a),\qquad  \RK^c(\wh \M):=\prolim_a \RK( \M/p^a),
\]
where $\wh \A=\prolim_a  \A/p^a$ and $\wh \M=\prolim_a   \M/p^a$ are the $p$-adic completions. 

The diagram \eqref{CDintro} implies a homotopy equivalence 
between the fibers of $\RK(\A) \to  \RK(\M)$ and $\RK^c(\wh \A) \to  \RK^c(\wh\M)$.  
This can be used to bound the kernel of $\RK_n(\A)\to \RK_n(\M)$, and, by the above, also $\RNK_n(R[G])$, in terms of $\RK^c(\wh \A) \to  \RK^c(\wh\M)$.
Indeed, if $\RF$ is the common homotopy fiber, we have an exact sequence
\[
\pi_n(\RF)\to \RK_n(\A)\to \RK_n(\M).
\]
 Hence, it is enough to find an exponent  for $\pi_n(\RF)$.
 
 We find  this exponent in two steps by considering localization sequences for continuous $\RK$-theory with respect to the ideal $(p)$ in the $p$-adically complete algebras $\wh \A$ and $\wh \M$.  Localization introduces  a subgroup of the group $\pi_n(\RF)$ given via continuous $\RK$-groups   relative to $(p)$. We find an integer annihilating this subgroup in \S \ref{s:Beilinson} after comparing to a corresponding relative cyclic homology group via the $p$-adic Beilinson isomorphism of \cite{NikolausBFSq}. On the other hand, the quotient relates to $\RK$-groups of algebras in characteristic $p$.  We find an integer annihilating this quotient in \S \ref{s:truncated} by using results of Hesselholt--Madsen. 
 
\subsection{}   In response to a first version of the paper,  M. Morrow pointed out that the weaker result that $\RNK_n(R[G])$ has (unspecified) finite exponent when $G$ is a cyclic $p$-group can be shown more directly, by just combining pro-excision with the main result of \cite{GH2}. This shorter argument is explained in the Appendix, see the proof of Theorem \ref{thm1_app}.
It avoids the use of the $p$-adic Beilinson isomorphism but, in contrast to our approach, does not seem to lead to explicit bounds for the exponent. In fact, provided we are not aiming for specific exponent bounds, Theorem \ref{thm2_app}  provides a more general result: For each $n$, the Nil group $\RNK_n(X)$ of an excellent Noetherian scheme $X$, with $X[1/p]$ regular,
is annihilated by an unspecified finite power of $p$, provided $X$ admits a suitable resolution of singularities.

Our results in the twisted case described in \S \ref{ss:introtwisted} use the method in the proof of Theorem \ref{thm1_app}, extended to certain noncommutative rings.

\subsection{}   The paper is organized as follows. 

In \S \ref{s:1} we first give some preliminaries.  
 We then state our main results in the untwisted case $\alpha={\rm id}$, in \S \ref{ss:results}. These are stated for   general coefficient rings and we take some care 
 to give, as much as is feasible, explicit bounds for the exponents.

In \S \ref{s:reduction} we reduce the proofs to showing a single statement, Theorem \ref{thmexpo},  that gives an exponent when the group $G$ is a cyclic $p$-group. Sections 4--6 are devoted to proving this statement. 

In \S \ref{sec:ProMV},
we give the arguments regarding pro-excision, roughly following the outline above. In fact, we find that it is more effective to work throughout directly with nil $\RK$-theory, i.e. relative to the ideal $(t)$. We somewhat modify the argument in the sketch above and use instead a fibration sequence
\[
\RNK(\BZ[G])\to \RNK^c(\BZ_p[G])\to \RNK^c(M_G\otimes_\BZ\BZ_p),
\]
obtained from \eqref{CDintro}, see \eqref{nfib1}. We then apply localization relative to the ideal $(p)$.
With the resulting exact sequences, 
\eqref{pinFexact} and  \eqref{pinF/p}, we set the stage for obtaining the annihilators needed for the proof of Theorem \ref{thmexpo}, see \S \ref{ss:strategy} and the preceding discussion.

Finally, these annihilators are obtained in  Sections \ref{s:truncated}  and \ref{s:Beilinson}, as was also explained above. At the end of \S \ref{s:Beilinson},
we reexamine the case $n=1$ and show how a somewhat different argument leads to a lower exponent bound (see \S \ref{ss:n1case}).

In \S \ref{s:AppendixB}, we show the boundedness result  for the Farrell  groups $\NK_n(R[G],\alpha)$, Theorem \ref{FarrellNilThm_Intro}. We then deduce Theorem \ref{cor2_Intro}. The results of this section were obtained after the Appendix was written and the proofs use a similar method. In fact, 
the reader is advised to examine the proof of Theorem \ref{thm1_app} before proceeding to \S \ref{s:AppendixB}.

The paper concludes with the Appendix by M.~Morrow.

\smallskip 

\noindent {\bf Acknowledgements:} We would like to thank L. Hesselholt, A.~Mathew,  M.~Morrow, N.~Riggenbach and C.~Weibel for very useful email exchanges. 
We are especially grateful   to  C. Weibel, who suggested to us the use of pro-excision,  and to M.~Morrow for writing the Appendix, which also inspired the results in \S  \ref{s:AppendixB}.

 \section{Preliminaries and statement of results}\label{s:1}

\subsection{ Definitions and preliminaries}
Let $ R$ be a commutative ring, let $G$ be a finite group and $n$ a non-negative integer.

The following two results are central: see \cite[5.3.30]{Rosenberg}.

\begin{theorem}\label{thm:BHS}(Bass-Heller-Swan)
Let {\bf Nil}($S$) be the category whose objects are pairs $(P,A)$ consisting of a finitely generated projective $S$-module P and a nilpotent $S$-endomorphism $A$ of $P$.  The $\RK$-groups of ${\mathrm {Nil}}(S)$ split naturally as $\RK_i(S) \oplus \mathrm{Nil}_i(S)$.     
Define 
\[
\RNK_n(S) = \mathrm{Nil}_{n-1}(S).
\]
There are natural isomorphisms
\begin{equation*}
\RK_{n}( R[ t] [ G] )  \xrightarrow{\sim} \RNK_{n}( R [ G] ) \oplus \RK_{n}( R[ G]),
\end{equation*}
\begin{equation*}
\RK_{n}( R[ t,t^{-1}] [ G] )  \xrightarrow{\sim} \RNK_{n}( R [ G] ) \oplus \RK_{n}( R[ G]
) \oplus \RNK_{n}( R [ G] ) \oplus
\RK_{n-1}( R[ G] ) .
\end{equation*}
\end{theorem}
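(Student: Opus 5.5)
The plan is the classical Bass--Heller--Swan argument, carried out with $S=R[G]$ as the coefficient ring. Note that $R[t][G]=S[t]$ and $R[t,t^{-1}][G]=S[t,t^{-1}]$, so it is enough to prove the two decompositions for an arbitrary ring $S$.

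\emph{Step 1: the first isomorphism.} I would split $\RK_n(S[t])\to\RK_n(S)$ using the section $S\to S[t]$; then $\RK_n(S[t])=\RK_n(S)\oplus\RNK_n(S)$ holds with $\RNK_n(S)$ defined as the kernel, which is the definition in the introduction. The real content is to identify this kernel with $\mathrm{Nil}_{n-1}(S)$. For that I would use Quillen's localization theorem for the Ore localization $S[t]\to S[t,t^{-1}]$ with respect to the central element $t$, at the level of the polynomial variable $s=t^{-1}$. The category of $t$-torsion $S[t]$-modules of projective dimension $\le 1$ is equivalent to the category of pairs $(P,\nu)$ with $P$ finitely generated projective over $S$ and $\nu$ nilpotent, via $M\mapsto (M,\text{action of }t)$. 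This gives a fibre sequence
\[
\RK(\mathbf{Nil}(S))\to \RK(S[t])\to \RK(S[t,t^{-1}]).
\]

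\emph{Step 2: the fundamental theorem.} I would run the same localization once for $S[t]$ and once for $S[t^{-1}]$ and glue them in a Mayer--Vietoris style comparison. The transfer $\RK(S)\to\RK(\mathbf{Nil}(S))$, $P\mapsto(P,0)$, is split by the forgetful map, which is the splitting $\RK_i(\mathbf{Nil}(S))=\RK_i(S)\oplus\mathrm{Nil}_i(S)$. The composite $\RK(S)\to\RK(\mathbf{Nil}(S))\to\RK(S[t])$ is zero on homotopy, because of the exact sequence $0\to S[t]\xrightarrow{t}S[t]\to S\to 0$. Hence the long exact sequence breaks into short split exact sequences:
\[
0\to \RK_n(S[t])\oplus_{\RK_n(S)}\RK_n(S[t^{-1}])\to \RK_n(S[t,t^{-1}])\xrightarrow{\partial}\RK_{n-1}(S)\to 0.
\]
The map $\partial$ is split by cup product with the class of $t$ in $\RK_1(\BZ[t,t^{-1}])$. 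The same sequence also identifies $\mathrm{Nil}_{n-1}(S)$ with $\ker(\RK_n(S[t])\to\RK_n(S))$, which is the shift $\RNK_n=\mathrm{Nil}_{n-1}$. Substituting the first isomorphism for $S[t]$ and for $S[t^{-1}]$ then gives the four-term decomposition.

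\emph{Main obstacle.} The delicate point is showing that the boundary map $\RK_n(S[t,t^{-1}])\to\RK_{n-1}(\mathbf{Nil}(S))$ lands in the summand $\RK_{n-1}(S)$ and is surjective there; equivalently, that the $\mathrm{Nil}$ part of the torsion category is already realized in $\RK(S[t])$. I would first check this with the explicit element $t\cup x$, showing $\partial(t\cup x)=x$. I would then use naturality under $S\to S[t]$, together with a homotopy-invariance argument in which the map $\mathbf{Nil}(S)\to$ ($S[t]$-modules) factors through the full $t$-torsion category, to compare the two localization sequences for $t$ and $t^{-1}$. Since this is the classical theorem cited from \cite[5.3.30]{Rosenberg}, in the paper I would simply invoke that reference for these verifications.
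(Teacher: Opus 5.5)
The paper gives no argument for this statement; it simply quotes \cite[5.3.30]{Rosenberg}, which is also your fallback, so at the level of the paper you agree. The sketch you give before that fallback, however, has a genuine gap and does not go through as written.

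\emph{The deduction of your short exact sequence is a non sequitur.} Knowing that $\RK(S)\to\RK(\mathbf{Nil}(S))\to\RK(S[t])$ vanishes says nothing about $\mathrm{Nil}_*(S)\to\RK_*(S[t])$, so it does not break up the localization sequence. What is true, and what you should use, is that the \emph{entire} map $\RK_*(\mathbf{Nil}(S))\to\RK_*(S[t])$ vanishes. This follows from additivity applied to the natural exact sequence $0\to P[t]\xrightarrow{t-\nu}P[t]\to(P,\nu)\to 0$, whose two left-hand terms are the same exact functor. The localization sequence then gives $0\to\RK_n(S[t])\to\RK_n(S[t,t^{-1}])\xrightarrow{\partial}\RK_{n-1}(S)\oplus\mathrm{Nil}_{n-1}(S)\to 0$, which is not your displayed sequence.

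\emph{Your ``main obstacle'' is false.} The map $\partial$ is onto all of $\RK_{n-1}(\mathbf{Nil}(S))$; it does not land in the summand $\RK_{n-1}(S)$. Concretely, take commutative $S$ and a nilpotent $\nu$. Then $1+\nu t^{-1}=(t+\nu)t^{-1}$ and $\partial[1+\nu t^{-1}]=\pm\bigl([(S,-\nu)]-[(S,0)]\bigr)$. For $S=\mathbb{Q}[\epsilon]/(\epsilon^2)$ and $\nu=\epsilon$ this class is nonzero in $\mathrm{Nil}_0(S)$; it is detected by $(P,\mu)\mapsto\det(1-t\mu)$. This is precisely how $\RNK$ of the other chart $S[t^{-1}]$ shows up inside $\RK_{n-1}(\mathbf{Nil}(S))$.

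\emph{The missing ingredient is the one that actually proves $\RNK_n(S)\cong\mathrm{Nil}_{n-1}(S)$.} Your sequence with $\RK_n(S[t])\oplus_{\RK_n(S)}\RK_n(S[t^{-1}])$ contains no $\mathrm{Nil}$ term, so it cannot identify anything with $\mathrm{Nil}_{n-1}(S)$. Moreover, extracting it from the $t$-localization sequence already presupposes that identification, so the argument is circular exactly where the content lies.

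The classical input (Quillen, Grayson) is the projective line, which is what ``glue them in a Mayer--Vietoris style comparison'' has to mean:
\begin{itemize}
\item Use $\RK(\mathbb{P}^1_S)\simeq\RK(S)\oplus\RK(S)$, generated by $[\mathcal{O}]$ and $[\mathcal{O}(-1)]$.
\item Use the localization sequence $\RK(\mathbf{Nil}(S))\to\RK(\mathbb{P}^1_S)\to\RK(S[t^{-1}])$, obtained from the $t$-localization by excision.
\item Both generators restrict to $[S[t^{-1}]]$. The object $(P,0)$ maps to $(x,-x)$, via $0\to P\otimes\mathcal{O}(-1)\to P\otimes\mathcal{O}\to i_*P\to 0$ with $i$ the section $t=0$.
\item Hence the long exact sequence yields $0\to\RNK_n(S)\to\RK_{n-1}(\mathbf{Nil}(S))\to\RK_{n-1}(S)\to 0$, i.e.\ $\RNK_n(S)\cong\mathrm{Nil}_{n-1}(S)$.
\end{itemize}
By naturality of the two boundary maps, the image of $\RNK_n$ of the chart $S[t^{-1}]$ splits off the $\mathrm{Nil}_{n-1}(S)$-part of $\partial$. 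Cup product with $\{t\}$ splits off $\RK_{n-1}(S)$. Together these give both decompositions.
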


\begin{theorem}
\label{cor:easyregular}
If $R[G]$ is a (left) regular ring, then $\RNK_{n}(R[G]
) =(0) $.
\end{theorem}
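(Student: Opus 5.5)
This is the classical fundamental theorem of $\RK$-theory for regular rings. By Theorem \ref{thm:BHS}, $\RNK_n(S)$ is the kernel of the map $\RK_n(S[t])\to \RK_n(S)$ given by $t\mapsto 0$. So the claim is equivalent to homotopy invariance: for $S=R[G]$ left regular Noetherian, the map $\RK_n(S)\to \RK_n(S[t])$ is an isomorphism. I would prove this by passing to $\RG$-theory, i.e. Quillen $\RK$-theory of finitely generated modules, and then using resolution.

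First I show $\RK_n(S)\cong \RG_n(S)$, and similarly for $S[t]$. Here I take "regular" to mean left Noetherian with every finitely generated left module of finite projective dimension; $R[G]$ is Noetherian whenever $R$ is. By Quillen's resolution theorem, the inclusion of finitely generated projectives into finitely generated modules then induces $\RK_n(S)\xrightarrow{\sim}\RG_n(S)$. I also need $S[t]$ to be regular. It is left Noetherian by the Hilbert basis theorem. For a finitely generated $S[t]$-module $M$, the standard sequence
\[
0\to S[t]\otimes_S M\to S[t]\otimes_S M\to M\to 0
\]
(the first map being $t\otimes 1-1\otimes t$) together with finite projective dimension over $S$ bounds the projective dimension of $M$ over $S[t]$. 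So $\RK_n(S[t])\cong \RG_n(S[t])$ as well.

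Next comes Quillen's homotopy invariance of $\RG$-theory: for $S$ left Noetherian, $\RG_n(S)\to \RG_n(S[t])$, $M\mapsto S[t]\otimes_S M$, is an isomorphism. This is a statement for a left Noetherian ring $S$, and no regularity is needed. The standard proof uses the graded ring $S[t]$ and the localization sequence for $S[t]\to S[t,t^{-1}]$, or equivalently Quillen's argument with graded modules over $S[t_0,t_1]$ and the projective line. This step is the real content, and I expect it to be the main obstacle if one proves it rather than citing it; I would simply cite \cite[5.3.30]{Rosenberg} or Quillen's original paper.

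Finally I put the pieces together. The maps $S\to S[t]\to S$ are compatible with the identifications $\RK\cong\RG$, since extension of scalars along the flat map $S\to S[t]$ preserves projectives. Hence $\RK_n(S)\to\RK_n(S[t])$ is an isomorphism, and its one-sided inverse $t\mapsto 0$ is therefore also an isomorphism. The kernel of $t\mapsto 0$ is thus zero, which gives $\RNK_n(R[G])=0$ by Theorem \ref{thm:BHS}, for all $n\ge 0$. For negative $n$ the same conclusion follows from the Bass definition of negative $\RK$-groups, using that $S[t_1,\dots,t_k]$ and Laurent extensions of $S$ are again regular.
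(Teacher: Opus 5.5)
Your outline is the standard Quillen argument: the resolution theorem identifies $\RK$ with $\RG$, and homotopy invariance of $\RG$-theory holds for left Noetherian rings. This is the result the paper simply quotes from \cite[5.3.30]{Rosenberg}; the paper gives no independent proof, so your route is the same. The compatibility of $\RK\cong\RG$ with base change along the flat map $S\to S[t]$ is fine. So is the passage from ``$\RK_n(S)\to\RK_n(S[t])$ is an isomorphism'' to the vanishing of the kernel of $t\mapsto 0$, and so is the remark on negative $n$.

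The one step that does not follow as written is the regularity of $S[t]$. The sequence $0\to S[t]\otimes_S M\to S[t]\otimes_S M\to M\to 0$ gives $\mathrm{pd}_{S[t]}M\le \mathrm{pd}_S M+1$. But a finitely generated $S[t]$-module $M$ is in general not finitely generated over $S$. Your definition of regularity only controls finitely generated $S$-modules, with no uniform bound on their projective dimensions. So $\mathrm{pd}_S M<\infty$ is immediate only when $S$ has finite global dimension.

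The gap is easy to close:
\begin{enumerate}
\item Choose a finitely generated $S$-submodule $M_0$ generating $M$ over $S[t]$, and put $M_k=M_0+tM_0+\cdots+t^kM_0$. These are $S$-submodules, since $t$ is central.
\item Multiplication by $t^k$ induces a surjection $M_0\to M_k/M_{k-1}$ with kernel $\{m\in M_0: t^km\in M_{k-1}\}$. This kernel increases with $k$, so it stabilizes in the Noetherian module $M_0$.
\item Hence the quotients $M_k/M_{k-1}$ run through only finitely many finitely generated $S$-modules. This gives $\mathrm{pd}_S M_k\le d$ for one fixed $d$.
\item The countable increasing union $M=\bigcup_k M_k$ then satisfies $\mathrm{pd}_S M\le d+1$.
\end{enumerate}
Alternatively, cite the classical fact, due to Bass and Quillen, that $S[t]$ is left regular whenever $S$ is.
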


Note that by the above, the Nil  groups are  also relative $\RK$-groups (\cite{WeibelKbook})
\begin{equation}\label{nilrelative}
\RNK_n(R[G])=\RK_n(R[G][t], (t)).
\end{equation}

We will also use the following, shown by Weibel \cite{WMV}, after work of Vorst \cite{Vorst} and Stienstra \cite{Stienstra}.

\begin{theorem}\label{thm:localization} (cf. \cite[Cor. 6.4]{WMV}) Suppose that $R$ is flat over $\BZ$ and $S$ is a multiplicative subset of the integers $\BZ$. Then, there
are isomorphisms
\begin{equation}\label{eq:localization}
S^{-1}\BZ\otimes_\BZ \RNK_n(R[G])\xrightarrow{\sim} \RNK_n((S^{-1} R)[G]),
\end{equation}
where $S^{-1}\BZ$ and $S^{-1}R=S^{-1}\BZ\otimes_\BZ R$ denote the localizations. \qed
\end{theorem}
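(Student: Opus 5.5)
The isomorphism \eqref{eq:localization} can be derived from two facts. The first is that $\RNK_n$ commutes with filtered colimits of rings. The second is that $\RNK_n(A)$ carries a natural module structure over the ring of big Witt vectors $W(\BZ)$, or at least an action of the Verschiebung and Frobenius operators $V_m, F_m$. Recall that $S^{-1}R=\indlim_{s\in S} R$, where the transition maps are multiplication by elements of $S$. Equivalently, $S^{-1}R[G]$ is the filtered colimit of copies of $R[G]$ along maps that are not ring maps, so this presentation must be handled with care. Instead, the approach is to compute $\RNK_n(S^{-1}R[G])$ via the Nil-category description of Theorem \ref{thm:BHS} and show that inverting $s\in S$ on coefficients corresponds to inverting $s$ on the Nil group.

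\textbf{Step 1 (Witt module structure).} Following Stienstra and Weibel, $\RNK_n(A)=\mathrm{Nil}_{n-1}(A)$ is a continuous module over $W(\BZ)=1+t\BZ[[t]]$, and the operators satisfy $F_mV_m=m$. Every element of $\RNK_n(A)$ is killed by some $V_m$-filtration step, because it comes from $\RK_n(A[t],(t))$ with bounded degree. Hence the action of $W(\BZ)$ extends to a module structure over $W(\BZ)\otimes$ (suitable localizations). In particular, for $s\in S$, the action of $s\in\BZ\subset W(\BZ)$ coincides with the ghost-component operators, and one can form $\RNK_n(A)\otimes_{W(\BZ)}W(S^{-1}\BZ)$.

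\textbf{Step 2 (comparison).} For $A=R[G]$ flat over $\BZ$, consider the map $A\to S^{-1}A$, whose kernel is trivial by flatness. By a Vorst-type argument, $\RNK_n(S^{-1}A)\cong W(S^{-1}\BZ)\otimes_{W(\BZ)}\RNK_n(A)$. This follows because $S^{-1}A=A\otimes_\BZ S^{-1}\BZ$ is a filtered colimit of the rings $A[1/s]$. Moreover, the Nil groups of $A[1/s]$ differ from those of $A$ only through the $s$-primary part, which is detected via the Mayer–Vietoris/excision square for $A\to A[1/s]$ and $A\to \wh A_s$. Finally, $W(S^{-1}\BZ)\otimes_{W(\BZ)}M\cong S^{-1}\BZ\otimes_\BZ M$ for any continuous $W(\BZ)$-module $M$, since $W(S^{-1}\BZ)$ is the localization of $W(\BZ)$ at the image of $S$ under the Teichmüller-free inclusion $\BZ\subset W(\BZ)$.

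\textbf{Step 3.} Combining the two steps, the map $S^{-1}\BZ\otimes\RNK_n(R[G])\to\RNK_n(S^{-1}R[G])$ is an isomorphism, and its naturality is clear. The main obstacle is Step 2: showing that the localization of coefficients is exactly base change of Witt modules. To handle it, I would first prove that the kernel and cokernel of $\RNK_n(A)\to\RNK_n(A[1/s])$ are $s$-primary torsion. This would use the transfer/restriction argument, which multiplies by $s$ on $\RNK$ via $F_sV_s=s$. Having established that, I would deduce that $\RNK_n(A[1/s])$ is uniquely $s$-divisible using the $W(\BZ[1/s])$-module structure, in which $s$ is a unit.
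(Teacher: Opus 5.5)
The paper does not prove this statement; it quotes Weibel [WMV, Cor.~6.4]. Judged on its own, your proposal has a genuine gap. The comparison between $\RNK_n(A)$ and $\RNK_n(A[1/s])$, which is the whole content of the theorem, is never established. In Step~2, the isomorphism $\RNK_n(S^{-1}A)\cong W(S^{-1}\BZ)\otimes_{W(\BZ)}\RNK_n(A)$ is just the theorem restated in Witt-module language. The supporting sentence (the Nil groups ``differ only through the $s$-primary part, detected via the Mayer--Vietoris square'') assumes what is to be proved. Your auxiliary claim that $W(S^{-1}\BZ)$ is the localization of $W(\BZ)$ at $S$ is also false. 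The Teichm\"uller vector $[1/p]\in W(\BZ[1/p])$ has ghost components $p^{-1},p^{-2},p^{-3},\dots$ with unbounded denominators, so it is not in $W(\BZ)[1/p]$. What is true, and suffices for continuous modules, is $W_N(\BZ)[1/s]=W_N(\BZ[1/s])$ for the finite-length quotients.

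The skeleton of Step~3 is sound. If the kernel and cokernel of $\RNK_n(A)\to\RNK_n(A[1/s])$ are $s$-primary torsion and the target is uniquely $s$-divisible, you are done, and $S^{-1}A=\varinjlim_{s\in S}A[1/s]$ handles general $S$. The divisibility does follow from the continuous $W(\BZ[1/s])$-module structure. But the torsion claim rests only on a ``transfer/restriction argument via $F_sV_s=s$'', and that cannot work. $F_s$ and $V_s$ are induced by $t\mapsto t^s$ and act on $\RNK_*$ of one fixed ring, so they yield no map $\RNK_*(A[1/s])\to\RNK_*(A)$. There is also no transfer along $A\to A[1/s]$, since $A[1/s]$ is not a finitely generated projective $A$-module.

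You need an operation that genuinely links $A[t]$ with $A[1/s][t]$. For instance, the ring endomorphism $t\mapsto st$ of $A[t]$ acts on $\RNK_n(A)$ as the Teichm\"uller operator $[s]$. Moreover, $\varinjlim(A[t]\xrightarrow{t\mapsto st}A[t]\xrightarrow{t\mapsto st}\cdots)$ is the subring $A+tA[1/s][t]$ of $A[1/s][t]$ (using $\BZ$-flatness). Since $\RK$-theory commutes with filtered colimits, $\varinjlim_{[s]}\RNK_n(A)\cong\RK_n(A+tA[1/s][t],\,tA[1/s][t])$. On a continuous $W(\BZ)$-module, inverting $[s]$ is the same as inverting $s$, because both localizations of $W_N(\BZ)$ equal $W_N(\BZ[1/s])$. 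Even with this, you would still have to prove excision for the ideal $tA[1/s][t]$ between $A+tA[1/s][t]$ and $A[1/s][t]$, i.e.\ that $\RK_n(A+tA[1/s][t],\,tA[1/s][t])\to\RK_n(A[1/s][t],(t))$ is an isomorphism. That is substantive $\RK$-theoretic input, and nothing in your proposal supplies it.
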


 \begin{corollary}\label{cor:localization} 
 Suppose $R$ is regular and flat over $\BZ$. Then, for all $n\geq 0$,   
 \[
\BZ[ |G|^{-1}]\otimes_\BZ \RNK_n(R[G])=(0).
 \]
 \end{corollary}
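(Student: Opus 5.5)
Apply Theorem \ref{thm:localization} with $S$ the multiplicative subset of $\BZ$ generated by $|G|$, i.e. $S=\{1,|G|,|G|^2,\dots\}$. Then $S^{-1}\BZ=\BZ[|G|^{-1}]$ and $S^{-1}R=R[|G|^{-1}]$. Since $R$ is flat over $\BZ$, the theorem gives an isomorphism
\[
\BZ[|G|^{-1}]\otimes_\BZ \RNK_n(R[G])\xrightarrow{\sim} \RNK_n(R[|G|^{-1}][G]).
\]
It therefore suffices to show that the right-hand side vanishes.

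For the vanishing we use Theorem \ref{cor:easyregular}, so we must check that $R[|G|^{-1}][G]$ is (left) regular. Set $R'=R[|G|^{-1}]$. It is a localization of a Noetherian regular commutative ring, hence again Noetherian and regular. We will show that $R'[G]$ has finite global dimension, bounded by that of $R'$ (locally), and that it is Noetherian, being module-finite over $R'$.

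\begin{enumerate}[(i)]
\item The key input is the standard Maschke-type averaging argument. Let $M$ be a left $R'[G]$-module that is projective as an $R'$-module. The surjection $R'[G]\otimes_{R'}M\to M$ splits $R'$-linearly, and averaging that splitting via $s\mapsto |G|^{-1}\sum_g g s g^{-1}$ makes it $R'[G]$-linear. This uses that $|G|$ is invertible in $R'$.
\item Hence every $R'[G]$-module that is $R'$-projective is $R'[G]$-projective, being a direct summand of the induced module $R'[G]\otimes_{R'}M$, which is $R'[G]$-projective.
\item Now take a finitely generated $R'[G]$-module $N$. Over the regular ring $R'$ it has, locally, a finite projective resolution. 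Resolve $N$ by finitely generated free $R'[G]$-modules and truncate at a stage beyond the local projective dimension over $R'$. The resulting syzygy is $R'$-projective, so by (ii) it is $R'[G]$-projective.
\end{enumerate}

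One subtlety is that $R'$ need not have finite global dimension globally, only locally. We can handle this by working with regularity in the sense that every finitely generated module has a finite projective resolution, checked after localizing at primes of $R'$. Alternatively, note that the $\RK$-theoretic statement (vanishing of $\RNK$) is what is actually needed, and Theorem \ref{cor:easyregular} applies with the usual notion of regularity. Either way, $R'[G]$ is regular, so $\RNK_n(R'[G])=(0)$ and the corollary follows.

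The main (mild) obstacle is exactly this regularity step for the noncommutative ring $R'[G]$. The averaging argument in (i)--(iii) is the tool we would use to settle it.
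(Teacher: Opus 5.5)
Your proposal is correct and follows essentially the paper's argument: apply Theorem~\ref{thm:localization} with $S=\{|G|^k\}_{k\geq 0}$, then Theorem~\ref{cor:easyregular}, using that $R[|G|^{-1}][G]$ is left regular. The paper only asserts this regularity; your Maschke-type averaging argument correctly supplies it, with your ``subtlety'' settled by the standard fact that every finitely generated module over a Noetherian regular commutative ring has finite projective dimension (the paper uses the same averaging in the proof of Proposition~\ref{propB}).
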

 \begin{proof}
 This follows from Theorem \ref{thm:localization} and Theorem  \ref{cor:easyregular} since when $R$ is regular, the localization
$\BZ[ |G|^{-1}] \otimes_\BZ R[G]$ is left regular.
\end{proof}
 
 \subsection{Results in the untwisted case}\label{ss:results}
 
 We always assume $n\geq 0$ and $G$ is a finite group. Let  $R$ be a commutative ring which satisfies the assumptions of Theorem \ref{thm:MainIntro}:
\smallskip

\begin{quote} $R$ is Noetherian, regular, and flat over $\BZ$ and 
 $R/p$ is regular for all primes $p$
dividing the order $|G|$.
\end{quote}
\smallskip

Our main results are given in the next paragraphs.  
These produce a very explicit bound for the exponent of $\RNK_n(R[G])$ when the order of $G$ is not divisible by any prime $p<3+n/2$.

\subsubsection{General coefficients}

 We denote by $e(G)$ the exponent of $G$. 
 For each prime $p$ which divides $|G|$ we denote by $G_p$ a $p$-Sylow subgroup of $G$. 

\begin{theorem}\label{thm:general}  
Assume that $R$ satisfies the assumptions of Theorem \ref{thm:MainIntro}. 
There exist integers $a(p, n)\geq 0$, which are equal to $0$ if $p\geq 3+n/2$ and which are independent of 
$R$ and $G$, so that  $\RNK_{n}(R[G])$ is annihilated by 
\[
 \prod_{p||G|, p\not\in R^*} e(G_p)^{n+3}|G_p|\, p^{ a(p, n) +3},
\]
  the product being  over the prime divisors of $|
G|$ which are not units in $R$.
\end{theorem}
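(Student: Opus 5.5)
We follow the outline sketched in the introduction: first reduce to $p$-primary parts and Sylow subgroups, then to cyclic $p$-groups, and then bound the cyclic case with pro-excision and trace methods.

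\emph{Step 1: $p$-local reduction.} By Corollary \ref{cor:localization}, $\RNK_n(R[G])$ is a torsion group whose torsion is supported at primes dividing $|G|$. If $p$ is a unit in $R$, then Theorem \ref{thm:localization} with $S=\{p^k\}$ shows that the $p$-primary part is already killed. So it suffices, for each prime $p$ dividing $|G|$ with $p\notin R^*$, to show that the $p$-localization $\RNK_n(R[G])_{(p)}=\RNK_n(R_{(p)}[G])$ is annihilated by $e(G_p)^{n+3}|G_p|\,p^{a(p,n)+3}$.

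\emph{Step 2: passing to $G_p$, and then to cyclic groups.} For the passage from $G$ to $G_p$, I would use that $\RNK_n$ is a Mackey functor, with restriction and induction along $G_p\subset G$. The composite $\mathrm{ind}\circ\mathrm{res}$ is multiplication by the class $[\BZ[G/G_p]]$ in the Swan ring, which is prime to $p$ after localization. This is essentially the argument of Hambleton--L\"uck \cite{HLuck}. Hence $\RNK_n(R_{(p)}[G])$ is a direct summand of $\RNK_n(R_{(p)}[G_p])$.

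For a $p$-group $P$, I would next use induction theory (Dress/Swan) for modules over the Green functor $\RK_0(\BZ[-])$. The Artin induction defect for $p$-groups relative to cyclic subgroups is controlled by $|P|$: one has $|P|\cdot 1\in\sum_{C\ \mathrm{cyclic}}\mathrm{ind}_C^P(\cdots)$ in the rational representation ring, and in fact in the Burnside-ring sense up to a factor $|P|$. So $|P|\cdot\RNK_n(R[P])$ lies in the image of $\bigoplus_C\RNK_n(R[C])$. This yields the factor $|G_p|$, and reduces the statement to a cyclic $p$-group $C$ with $|C|\le e(G_p)$. It then suffices to show that $\RNK_n(R[C])_{(p)}$ is killed by $|C|^{n+3}p^{a(p,n)+3}$. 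I will state this cyclic case as a separate theorem, Theorem \ref{thmexpo}, which is the reduction target.

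\emph{Step 3: the cyclic case.} Let $M$ be the maximal order of $\bbQ[C]$, tensored with $R$. Under the hypotheses $M\otimes R$ is regular, so its Nil groups vanish. Pro-excision (Morrow) then gives a fibration
\[
\RNK(R[C])\to\RNK^c(\wh{R}[C])\to\RNK^c(\wh M),
\]
and it is enough to bound the homotopy groups of the fiber $F$ of the second map. To do this I would localize with respect to $(p)$, which splits the problem into two pieces:
\begin{enumerate}[(a)]
\item The relative continuous part. Here I would compare with relative cyclic homology of the $(t)$-adic ideal via the $p$-adic Beilinson fiber square \cite{NikolausBFSq}, which is an isomorphism in a range. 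The conductor of $R[C]$ in $M$ contains $|C|$, which produces powers of $|C|$. The factorials coming from divided powers in degrees $\le n+2$ cost $p^{a(p,n)}$ only when $p<3+n/2$.
\item The characteristic $p$ part. This involves $\RNK$ of $(R/p)[C]$ and of $M/p$. Here I would use Hesselholt--Madsen computations of $\RK$ of truncated polynomial algebras: $(R/p)[C]\cong(R/p)[x]/x^{|C|}$, and $R/p$ is regular. Their results give annihilation by $|C|$ to a bounded power in each degree.
\end{enumerate}

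\emph{Main obstacle.} Combining the pieces, I expect an exponent of the form $|C|^{n+3}p^{a+3}$. The main obstacle will be (a): making the Beilinson comparison precise in the nil-relative, pro setting, and tracking exactly which factorials occur. This is where the condition $p\ge 3+n/2$ arises. I would try first to use the range of the Beilinson comparison and bound the Hodge-completed relative cyclic homology of $(R[C][t],(t))$ explicitly.
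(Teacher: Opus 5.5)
Your overall architecture matches the paper's: reduce to Sylow subgroups, induct down to cyclic subgroups, then treat cyclic $C$ by pro-excision, localization at $(p)$, the $p$-adic Beilinson isomorphism and Hesselholt's truncated-polynomial computations. Two steps fail as written.

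\textbf{The passage from $G$ to $G_p$.} Frobenius reciprocity does give $\mathrm{ind}\circ\mathrm{res}=[\BZ[G/G_p]]\cdot(-)$. But this element of the Swan ring is not a unit after localizing at $p$; only its augmentation $[G:G_p]$ is prime to $p$. Already for $G=C_3$ and $p=2$ you have $\RG_0(\BZ[C_3])\cong R_\Q(C_3)=\BZ[\chi]/(\chi^2-\chi-2)$ with $[\BZ[C_3]]=1+\chi$. Since $(1+\chi)(\chi-2)=0$, this class is a zero divisor.

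In general, $p$-local induction theory for the Swan ring only reduces you to $p$-hyperelementary subgroups. No formal Green-module argument makes $\RNK_n(R_{(p)}[G])$ a direct summand of $\RNK_n(R_{(p)}[G_p])$. This is exactly why the paper invokes Hambleton--L\"uck, Theorem~\ref{HambletonL}. Their maps $\Phi_P$ are not inductions: they use $p'$-elements and Verschiebung operators. What they give is only surjectivity of $\bigoplus_P\RNK_n(R[P])_{(p)}\to\RNK_n(R[G])_{(p)}$ over all $p$-subgroups $P$. That suffices, because each such $P$ is conjugate into $G_p$, so $e(P)\le e(G_p)$ and $|P|\le |G_p|$. (Note also that the Green ring acting here is the Swan ring $\RG_0$, not $\RK_0(\BZ[-])$.)

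\textbf{The exponent bookkeeping does not close.} You use Artin's bound $|P|$ for the induction defect uniformly in $P$. That forces you to prove the cyclic case in the form $p^{(n+3)m+3+a(p,n)}\cdot\RNK_n(R[C])=0$ for $|C|=p^m$, with $a(p,n)=0$ forced when $p\ge 3+n/2$. The ingredients of your Step 3 give one more factor of $p$:
\begin{itemize}
\item $\HC_{n-1}(f,(p);\BZ_p)$ is a subquotient of sums of $\wh\M^{\hat\otimes k}/f^{\hat\otimes k}(\wh\A^{\hat\otimes k})$ with $k\le n+1$, hence is killed by $p^{(n+1)m}$.
\item $\pi_n(\RNF(/p))$ sits between $\RNK_{n+1}(M_G\otimes_\BZ R/p)$ and $\RNK_n(R[G]/p)$, each killed by $p^{m+2}$ when $n\ge 2$ and $p\ge 3+n/2$.
\end{itemize}
The total is $p^{(n+3)m+4}$, which is Theorem~\ref{thmexpo}. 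Your expected $|C|^{n+3}p^{a+3}$ has no argument behind it. As planned, your proof yields only $e(G_p)^{n+3}|G_p|\,p^{a(p,n)+4}$.

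The paper recovers the missing $p$ in two ways. When $G_p$ is cyclic, it skips induction entirely, and then $|G_p|\ge p$ absorbs the extra factor. When $G_p$ is non-cyclic, it uses Lam's theorem that the Artin exponent divides $p^{m-1}$ (Corollary~\ref{killNKR}), rather than Artin's $p^m$.

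\textbf{A related point in (a).} The Beilinson comparison must be applied to the $(p)$-relative continuous $\RK$-theory of the map $f:\wh\A\to\wh\M$; injectivity of $f^{\hat\otimes k}$ comes from $\BZ_p$-flatness. The $(t)$-relative cyclic homology of $R[C][t]$ on its own is not torsion: it contains $tR[C][t]$ in degree $0$. So trying to bound it explicitly cannot work. Only the relative-to-$f$ term carries the conductor bound.
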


The integers $a(p, n)$ are defined in \S \ref{ss:Finale}.
This obviously implies Theorem \ref{thm:MainIntro} of the introduction.
We obtain a  sharper result when $G$ is a cyclic $p$-group with $p\geq 3+n/2$.

\begin{theorem}\label{pcyclicIntro}
Assume that $R$ is as above and that $G$ is a non-trivial cyclic group of order $p^m$,  with $p\geq 3+n/2$.
Then $\RNK_{n}(R[G])$ is annihilated by
$
p^{(n+3)m+4 } .
$
\end{theorem}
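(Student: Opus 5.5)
We follow the strategy sketched in the introduction for a cyclic $p$-group $G=C_{p^m}$. Since $p$ divides $|G|$, the case $p\in R^*$ is trivial by Corollary \ref{cor:localization}: then $R[G]$ is regular, so $\RNK_n(R[G])=0$. We may therefore assume $p\notin R^*$. By Theorem \ref{thm:localization} the group $\RNK_n(R[G])$ is $p$-primary torsion, so it suffices to bound its $p$-power exponent. Let $M_G$ be the maximal order of $\bbQ[G]$; it is the product $\prod_{j=0}^m \bbZ[\zeta_{p^j}]$. Since $R$ and $R/p$ are regular and flat over $\bbZ$, the ring $M_G\otimes_\bbZ R$ is regular, and hence $\RNK_n(M_G\otimes R)=0$.

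We use pro-excision (Morrow, Geisser--Hesselholt) for the conductor square of $R[G]\subset M_G\otimes R$. The conductor contains $p^m$, so the square gives a fibration
\[
\RNK(R[G])\to \RNK^c(\wh{R}[G])\to \RNK^c(M_G\otimes \wh R),
\]
with continuous Nil-$\RK$-theory taken $p$-adically. It therefore suffices to annihilate $\pi_n$ and $\pi_{n+1}$ of the relative term $\RF$, the fiber of the last map. For each of the two $p$-complete rings we use the localization-type sequence relative to the ideal $(p)$. This splits the relevant homotopy into two pieces: a ``relative to $p$'' part, and a characteristic-$p$ part involving $\RNK$ of $(R/p)[G]$ and of $M_G\otimes R/p$, the latter being a product of truncated polynomial rings $(R/p)[x]/(x^{p^{j-1}(p-1)})$.

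For the characteristic-$p$ part we use Hesselholt--Madsen's computation of the $\RK$-theory of truncated polynomial algebras and of $\RNK$ of $\bbF_p$-algebras, expressed via big de Rham--Witt groups. Because $R/p$ is regular, $\RNK$ of $R/p$ itself vanishes. The Nil-groups of $(R/p)[C_{p^m}]=(R/p)[x]/(x^{p^m})$ and of the truncated factors are then Witt-vector-type groups $W_k(\cdot)$ with truncation length about $p^m$ times a factor linear in $n$. Each is killed by $p^{m+\lceil\log_p(\text{degree})\rceil}$, which accounts for roughly $p^{(n+1)m+O(1)}$ of the bound.

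For the relative-to-$p$ part we apply the $p$-adic Beilinson fiber square of Antieau--Mathew--Morrow--Nikolaus. This identifies relative continuous $\RK$-theory with relative (negative/periodic) cyclic homology in degrees $n<p-2$ up to bounded error, and the hypothesis $p\ge 3+n/2$ is exactly what puts us in that range. The relative $\HC_{n-1}$ of $\wh R[G][t]$ versus $M_G\otimes\wh R[t]$ relative to $(t)$ is then estimated via the index of $\bbZ_p[G]$ in $M_G\otimes\bbZ_p$ and powers of the conductor. The index is annihilated by $p^m$, and in degree $n$ the powers of the conductor contribute about $p^{m(n+2)/2}$-type factors. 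We aim for $p^{(n+2)m+3}$ here so that the total is $p^{(n+3)m+4}$. We expect this cyclic homology estimate, in particular controlling the Hochschild homology of $\bbZ_p[C_{p^m}]$ relative to its normalization uniformly in $n$, together with the error terms in the Beilinson comparison, to be the main obstacle. We would first bound relative $\mathrm{HH}$ using the explicit periodic resolution for cyclic groups, and then pass to $\HC$ via the SBI sequence, losing one factor of $p^m$ per step.

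Finally we combine the two parts through the exact sequences, multiplying the annihilators of the subgroup and of the quotient, and count exponents to reach $p^{(n+3)m+4}$.
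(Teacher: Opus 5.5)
\textbf{There is a genuine gap: the bound on the relative cyclic homology is never supplied.} Your skeleton is the paper's: the pro-excision fibration, localization along $(p)$, Hesselholt's truncated-polynomial computation for the characteristic-$p$ terms, and the $p$-adic Beilinson comparison for the rest. But the one estimate that produces the $n$-dependent part of the bound is left open: you call the relative cyclic homology bound ``the main obstacle''. The route you suggest would not deliver it. The periodic resolution computes $\mathrm{HH}_*(\BZ[C_{p^m}])$, not Hochschild homology of $\wh R\lan t\ran[G]$ relative both to $(p)$ and to $M_G\otimes\wh R\lan t\ran$. Passing to $\HC$ through the SBI sequence multiplies together annihilators of $\mathrm{HH}_{n-1},\mathrm{HH}_{n-3},\dots$ instead of using a single one, so even good $\mathrm{HH}$ bounds would overshoot.

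The missing idea is that no computation is needed:
\begin{itemize}
\item Represent $\HC(S,(p);\BZ_p)$, for $S=\wh\A,\wh\M$, by the total complex of the cyclic complex of the $p$-completed cyclic object built from the DGA $S\oplus Sx$ with $\partial x=p$.
\item Its terms are direct sums of completed tensor powers $S^{\hat\otimes k}$, and in total degree $n-1$ only $k\le n+1$ occur.
\item Because $\wh R\lan t\ran^{\hat\otimes k}$ is $\BZ_p$-flat, $f^{\hat\otimes k}$ is injective for the inclusion $f:\wh\A\to\wh\M$. So the relative complex has terms $(\wh\M)^{\hat\otimes k}/f^{\hat\otimes k}((\wh\A)^{\hat\otimes k})$.
\item These terms are killed by $p^{km}$, since $p^m\wh\M\subset\wh\A$.
\item Homology is a subquotient of the chain group, so $p^{(n+1)m}$ kills $\HC_{n-1}(f,(p);\BZ_p)$.
\end{itemize}

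Two further points also need fixing.

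\textbf{The comparison range.} The relative zig-zag of quasi-isogenies is an equivalence after $(2p-6)$-truncation. So for $n\le 2p-6$, which is exactly $p\ge 3+n/2$, you get an honest isomorphism $\pi_n(\RF(-,(p)))\cong\HC_{n-1}(f,(p);\BZ_p)$ with no error terms. Your range $n<p-2$ is not implied by the hypothesis (take $n=4$, $p=5$). Any ``bounded error'' would also spoil the constant $4$.

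\textbf{The exponent count does not close.} The characteristic-$p$ term $\pi_n(\RNF(/p))$ is an extension of a subgroup of $\RNK_n(R[G]/p)$ by a quotient of $\RNK_{n+1}(M_G\otimes R/p)$. Each of these is killed by $p^{m+2}$ via Hesselholt's exact sequence when $p\ge 3+n/2$. So this side contributes $p^{2m+4}$, not $p^{(n+1)m+O(1)}$. Together with $p^{(n+1)m}$ from the relative part this gives exactly $p^{(n+3)m+4}$. Your split into $p^{(n+1)m+O(1)}$ and $p^{(n+2)m+3}$ does not sum to the stated bound.
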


\begin{Remark}
{\rm It is clear that the exponents provided by these theorems are not always optimal. This can be seen   by comparing with the previous results.
 For example, $\RNK_1(\BZ[C_p])=0$ by \cite[Cor. 1.1]{HLuck} or \cite{BassMurthy}, while   the above result only gives $p^8\cdot
\RNK_1(\BZ[C_p])=0$ when $p\geq 5$. However, see \S \ref{ss:n1case} for an improvement.}
\end{Remark}

 \subsubsection{Rings of integers}
 
The above  immediately apply to $R=O_L$, the ring of integers in a number field $L$,
when $L$ is unramified over $\mathbb{Q}$  at all prime divisors of the group order 
$|G|$.  In particular, they apply to $L=\bbQ$ and we obtain Theorem \ref{ZresultIntro} in the introduction. Theorem \ref{thm:Countable} 
then follows by using \cite[Theorem C]{LPW}.

\section{Reduction to cyclic $p$-groups}\label{s:reduction}

In this section we will show how to reduce the proofs to considering the case where $G$ is a cyclic $p$-group. 
More precisely, we show how Theorems   \ref{thm:general}  and \ref{pcyclicIntro}  follow from  Theorem \ref{thmexpo} below,  which pertains to cyclic $p$-groups. We accomplish this in two steps: first we reduce to the case where $G$ is a $p$-group; then we introduce the notions of Frobenius functors and Frobenius modules; this then permits us to use induction techniques to reduce to considering cyclic $p$-groups. 

\subsection{Reduction to $p$-groups}

Let $p$ be a prime and denote by $M_{(p)}$ the localization of an abelian group $M$ at $\mathbb{Z} - (p)$. Fix  $p$ and 
let $\mathcal{P}( G) $ denote the set of $p$-subgroups of $G$.
For an arbitrary ring $R $ and for each $P\in \mathcal{P}( G) $, Hambleton and L\"uck define in \cite{HLuck} a homomorphism
\begin{equation*}
\Phi _{P}:\RNK_{n}( R[ P] ) \rightarrow \RNK_{n}( R%
[ G] ). 
\end{equation*}
They show:

\begin{theorem}\label{HambletonL}  (\cite[Theorem A]{HLuck})
 The homomorphism
\begin{equation*}
\Phi =\sum_P \Phi _{P}:\bigoplus _{P\in \mathcal{P}( G)
}\RNK_{n}( R[ P] ) _{( p) }\rightarrow
\RNK_{n}( R[ G] ) _{( p) }
\end{equation*}
is surjective.
\end{theorem}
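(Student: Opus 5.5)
This statement is quoted from Hambleton–Lück \cite[Theorem A]{HLuck}, so here is how I would reprove it rather than just cite it. The approach is induction theory: view $\RNK_n(R[-])$ as a Mackey functor on subgroups of $G$ that is a module over the Swan ring (or the Burnside ring), and then use Dress induction localized at $p$.

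\textbf{Step 1: Mackey structure.} The restriction maps come from viewing $R[G][t]$ as a finitely generated free $R[H][t]$-module. The induction maps come from $R[G]\otimes_{R[H]}-$. Both preserve the relative groups $\RK_n(R[-][t],(t))$, using the description \eqref{nilrelative} or the Nil category of Theorem \ref{thm:BHS}. The double coset formula holds at the level of exact functors.

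\textbf{Step 2: module structure.} The Swan ring $\mathrm{Sw}(G,\BZ)$ (or the Burnside ring $A(G)$, via permutation modules) acts on $\RNK_n(R[G])$. An $R$-free $\BZ[G]$-lattice $M$ acts by $(P,\nu)\mapsto (M\otimes_\BZ P,\ 1\otimes\nu)$ with diagonal $G$-action, and this tensor product preserves nilpotence. One checks Frobenius reciprocity, so $\RNK_n(R[-])$ is a Green module over the Green functor $\mathrm{Sw}(-,\BZ)$.

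\textbf{Step 3: Dress induction at $p$.} After localizing at $p$, the unit of $\mathrm{Sw}(G,\BZ)_{(p)}$ lies in the sum of the images of inductions from $p$-hyperelementary subgroups $C\rtimes P$, with $C$ cyclic of order prime to $p$. Any Green module over it is therefore a quotient of the sum of the values on these subgroups. This is the standard Swan/Dress theorem.

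\textbf{Step 4: reduce from $H=C\rtimes P$ to $P$.} Because $|C|$ is invertible in $\BZ_{(p)}$, one would like to write $R[H]$ as $R[C]\rtimes P$ and split $R_{(p)}[C]$ into idempotent factors. The components are twisted group rings $S\ast P$, where $S$ is an étale extension of $R_{(p)}$. I would try to show that the Nil groups of these twisted group rings are hit by induction from $\RNK_n(R[P])_{(p)}$.

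I expect Step 4 to be the main obstacle. The twisting by the conjugation action of $P$ on $C$ means this is not a plain base change, so one needs some control over Nil groups of such twisted/crossed products. This is exactly the content of Hambleton–Lück's construction of the maps $\Phi_P$, which is a twisted induction rather than ordinary induction. In this paper I would simply invoke their result, as the statement does, rather than reprove it.
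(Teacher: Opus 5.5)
The paper gives no proof of this statement. It is quoted from \cite[Theorem A]{HLuck}, with only the remark that the $\Phi_P$ are not plain inductions but involve elements of order prime to $p$ and Verschiebung maps. Your proposal ends at the same citation, so in what it actually establishes it agrees with the paper. Steps 1--3 are a fair outline of the standard framework: the Frobenius-module structure over $\RG^{\BZ}_0(\BZ[-])$ is exactly what the paper records in its discussion of Frobenius structure.

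One caveat on Step 3: it genuinely needs the Swan ring, as you in fact use there, and not the Burnside ring alternative you mention. $A(G)_{(p)}$ is not $p$-hyperelementary computable. For a nonsolvable $G$ such as $A_5$, the mark of $1$ at $G$ is $1$, while anything induced from a proper subgroup has mark $0$ there.

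As a reproof, Step 4 is a genuine gap, and the plan you state for it cannot succeed, because you locate the difficulty in the wrong place. The obstruction is not the crossed-product structure coming from $P$ acting on $C$. It already appears when nothing is twisted. Take $G=C$ cyclic of prime order $q\neq p$ and $R=\BF_p[\epsilon]/(\epsilon^2)$, so that $P=1$.
\begin{itemize}
\item Since $q\in R^\times$, we have $R[C]\cong R\times R'$ with $R'=R\otimes_{\BZ}\BZ[\zeta_q]$.
\item The image of induction from the only $p$-subgroup is the graph $\{(x,\mathrm{bc}(x))\}$ of base change $\RNK_n(R)\to\RNK_n(R')$.
\item By Frobenius reciprocity this image is already stable under the Swan ring, so no Green-module operation enlarges it.
\item But $\RNK_1(R')\neq 0$: the units $1+\epsilon a t$ give nonzero classes of order $p$.
\end{itemize}
So ordinary induction from $p$-subgroups misses $0\oplus\RNK_1(R')$, even after localizing at $p$.

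The missing idea is an operation on the polynomial variable rather than on the group. For $g\in C_G(P)$ of order prime to $p$, consider the ring map $\sigma_g\colon R[P][t]\to R[G][t]$, $t\mapsto gt$. It is well defined because $g$ centralizes $P$, and it preserves $(t)$. One combines these maps with the Verschiebung operators, i.e.\ with the Witt-vector module structure of Nil groups.

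In the example, let $g$ generate $C$ and let $x\in\RNK_1(R)$ be the class of $1+\epsilon a t$ with $a\in\BF_p^\times$. Then $\sigma_g(x)-\sigma_1(x)$ has trivial component $0$. Its $\zeta_q$-component is the class of $1+\epsilon a(\zeta_q-1)t\neq 1$, since $\zeta_q-1$ is a unit there. Ordinary induction never produces such a class.

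Assembling the $\Phi_P$ from such twists and Verschiebung maps, and proving that they exhaust $\RNK_n(R[G])_{(p)}$, is the actual content of Hambleton--L\"uck's theorem. Your sketch identifies where this content must go but does not supply it, so in the end, like the paper, it rests entirely on the citation.
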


\begin{Remark}
{\rm The maps $\Phi _{P}$ are not given simply as induction maps from $p$-subgroups: they are more complicated and use elements of $G$ of order prime to 
$p$ and the Verschiebung homomorphisms. }
\end{Remark}

Assume now $R$ is regular and flat over $\BZ$.  

By Corollary \ref{cor:localization},   $\RNK_n(R[P])[1/p]=0$, hence
\[
\RNK_{n}( R[ P] )\xrightarrow{\sim}  \RNK_{n}( R [ P] )_{(p)}.
\]
By Theorem  \ref{thm:localization}, there is an isomorphism
\[
\RNK_{n}( R[ P] )_{(p)}\xrightarrow{\sim}   \RNK_{n}( R_{(p)}[ P] ).
\] 
Combining the two, we obtain an isomorphism
\begin{equation}\label{localizationiso}
\RNK_{n}( R[P] ) \xrightarrow{\sim}  \RNK_{n}( R_{(p)}[ P] ).
\end{equation}
 
 \subsection{Frobenius structure}
 In this subsection, $R$ is an arbitrary commutative $A$-algebra, where $A$ is either $\BZ$ or $\BZ_{(p)}$, and $H$ is a finite group. From \cite{CR94}, we have the Grothendieck ring of finitely generated $A[H]$-modules which are projective as $A$-modules, which is denoted by $\RG^{A}_{0}( A[ H])$. 
Since $A=\mathbb{Z}$ or $\BZ_{(p)}$ is regular, from \cite[p. 22]{CR94} we know that the Grothendieck group  $\RG_{0}(A[H] )$ of finitely generated $A[H]$-modules is naturally isomorphic to the ring  $\RG_{0}^{A}(A[ H] )$.

 We recall the notions of a Mackey functor and a Green ring acting on a Mackey
functor, see  e.g. \cite[p. 246]{Ol88}.
From   \cite[p. 200]{HLuck}  the functor $H\mapsto \RNK_{n}( R[ H]
) $ is a Mackey functor on the subgroups $H$ of $G$.  Furthermore, as in \emph{loc. cit.} page 210,
  the Green ring $\RG^A_0(A[G])$ acts on the Mackey functor $\RNK_{n}(R[H])$; that is to say that the 
 functor $\RNK_{n}(R[H])$ is a Green module for the Green ring  $\RG^A_0(A[G])$\footnote{In   \emph{loc. cit.} this is only stated for $A=\BZ$, but the case $A=\BZ_{(p)}$ is similar.}. Thus, in the terminology of Curtis--Reiner  \cite[\S 38.1, \S 38.7]{CR94}, 
we know that $H\mapsto \RNK_{n}(R[H])$ and $
H\mapsto  \RG^A_0(A[H])$ are Frobenius functors; and that $H\mapsto \RNK_{n}( R[ H] ) $ is
a Frobenius module for the Frobenius functor $H\mapsto \RG^A_0(A[H])$.
(See also  \cite[Cor. 2]{Harmon} for the case $n=1$.)

\subsection{$p$-groups and cyclic groups.}

  In  this subsection, initially we reduce to the case that $G$ is a $p$-group and subsequently a cyclic $p$-group.

\subsubsection{Artin induction}

Recall that the Artin exponent $A(G)$ of 
$G$ is defined (\cite{Lam}) to be the exponent of the quotient group 
\begin{equation*}
\frac{\RG_{0}^\Q( \mathbb{Q}[ G] ) }{\sum_{C}
\mathrm{Ind}_{C}^{G}\,\RG^\Q_{0}( \mathbb{Q}[ C] ) }
\end{equation*} 
where the sum is over cyclic subgroups $C$ of $G$.
In fact, this quotient is   a commutative ring, since the denominator is an ideal of $ 
\RG^\Q_{0}( \mathbb{Q}[ G] )$. 

From Lam's theorem (\cite{Lam}, \cite[Theorem 76.15]{CR94})  we know that
if $G$ is a non-cyclic $p$-group of order $p^{m}$ then 
\begin{equation}\label{ArtinExpValue}
A( G)\, |\, p^{m-1},
\end{equation}
with equality when $p$ is odd.

By Swan's Theorem \cite[Thm. 39.10]{CR94} the natural map
\[
\RG^{\BZ_{(p)}}_{0}( \mathbb{Z}_{(p)}[ G] ) \to \RG^\Q_{0}( \Q[ G] )
\]
is an isomorphism. Hence, by \cite[p. 785, Cor. 76.8]{CR94}, in $\RG^{\mathbb{Z}_{(p)}}_{0}( \mathbb{Z}_{(p)}[ G] )$  we can write 
\begin{equation}\label{Indformula}
A(G) =\sum_{C}\; \mathrm{Ind}_{C}^{G}(\kappa _{C})
\end{equation}
with   $\kappa _{C}\in \RG^{\mathbb{Z}_{(p)}}_{0}(\mathbb{Z}_{(p)}
[ C] )$.   We now obtain
 \begin{theorem}\label{reduction to cyclic}
 Suppose that $R$ is a $\BZ_{(p)}$-algebra.
For $n\geq 0$, if $M\geq 1$ annihilates $\RNK_{n}( R[ C] ) $ for all cyclic
subgroups $C$ of $G,$  then $A( G) M$ annihilates $\RNK_{n}( R[ G
] )$. 
\end{theorem}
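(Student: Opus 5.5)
The argument is the standard Frobenius-module induction, using the Green module structure recalled above. Write $A=\BZ_{(p)}$. Since $R$ is a $\BZ_{(p)}$-algebra, the Green ring $\RG^{A}_0(A[H])$ acts on the Mackey functor $H\mapsto \RNK_n(R[H])$. The unit $1=[A]$ (trivial module) acts as the identity, so multiplication by the integer $A(G)$ on $\RNK_n(R[G])$ equals the action of the element $A(G)\cdot 1\in \RG^A_0(A[G])$. By \eqref{Indformula} this element is $\sum_C \mathrm{Ind}_C^G(\kappa_C)$.

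The key step is the Frobenius reciprocity axiom for a Frobenius module over a Frobenius functor (\cite[\S 38]{CR94}). For $\kappa\in \RG^A_0(A[C])$ and $x\in \RNK_n(R[G])$ it reads
\[
\mathrm{Ind}_C^G(\kappa)\cdot x=\mathrm{Ind}_C^G\bigl(\kappa\cdot \mathrm{Res}^G_C(x)\bigr),
\]
where on the right $\mathrm{Ind}$ and $\mathrm{Res}$ are the induction and restriction maps of the Mackey functor $\RNK_n(R[-])$. This gives
\[
A(G)\,x=\sum_C \mathrm{Ind}_C^G\bigl(\kappa_C\cdot \mathrm{Res}^G_C(x)\bigr).
\]

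Each term $\kappa_C\cdot \mathrm{Res}^G_C(x)$ lies in $\RNK_n(R[C])$, and $\mathrm{Ind}_C^G$ is a group homomorphism. Since $M$ annihilates $\RNK_n(R[C])$,
\[
M\,A(G)\,x=\sum_C \mathrm{Ind}_C^G\bigl(M\cdot\kappa_C\cdot\mathrm{Res}^G_C(x)\bigr)=0.
\]

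The only point needing care is whether the cited Green module structure (\cite{HLuck}, p.~210) really satisfies Frobenius reciprocity with coefficients in $\BZ_{(p)}$, rather than only in $\BZ$, with the unit acting trivially. The paper's footnote asserts the $\BZ_{(p)}$ case is analogous. Concretely, $[V]$ acts on $\RNK_n(R[H])$ by the exact functor $P\mapsto V\otimes_{A} P$ with diagonal $H$-action, on the nil category over $R[H]$. Frobenius reciprocity is then the projection formula $V\otimes \mathrm{Ind}_C^G W\cong \mathrm{Ind}_C^G(\mathrm{Res}\,V\otimes W)$, which holds just as well over $A=\BZ_{(p)}$ because $R$ is an $A$-algebra. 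I expect this verification to be routine, so there is no real obstacle.
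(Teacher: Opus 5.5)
Your proposal is correct and is essentially the paper's own proof. You write $A(G)=\sum_C \mathrm{Ind}_C^G(\kappa_C)$ via \eqref{Indformula}, then apply Frobenius reciprocity for the Green module $\RNK_n(R[-])$ to get $A(G)M\cdot x=\sum_C \mathrm{Ind}_C^G\bigl(M\cdot\kappa_C\cdot \mathrm{Res}^G_C x\bigr)=0$. Your remark that the projection formula holds over $\BZ_{(p)}$ just spells out the paper's footnote; it adds nothing the argument needs beyond what the paper assumes.
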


\begin{proof} Suppose $\kappa \in \RNK_{n}( R[ G] ) $.  With the notation of (\ref{Indformula})  we have 
\begin{eqnarray*}
A( G) M\cdot \kappa = M\cdot\sum_{C}   ( \mathrm{Ind} 
_{C}^{G}\kappa _{C}) \cdot \kappa  
=\sum_{C}  \mathrm{Ind}_{C}^{G}( M\cdot \kappa _{C}\cdot \text{ 
\textrm{Res}}_{C}^{G}\kappa) =0, 
\end{eqnarray*}
 by using the Frobenius reciprocity relation for Frobenius modules. \end{proof}
 
 \begin{corollary}\label{killNKR} Assume that $R$ is regular and flat over $\BZ$.
Let $G$ be a non-cyclic $p$-group of order $p^{m}$ and
suppose $n\geq 0$ and that $p^{e}$ annihilates $\RNK_{n}( R[ C] ) $
 for all cyclic subgroups $C$ of $G$. Then  $p^{e+m-1}$
annihilates $\RNK_{n}( R[ G] )$. 
 \end{corollary}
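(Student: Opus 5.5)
The plan is to reduce to Theorem \ref{reduction to cyclic}, which needs $R$ to be a $\BZ_{(p)}$-algebra. We pass to $R_{(p)}$ using the isomorphism \eqref{localizationiso}. Since $R$ is regular and flat over $\BZ$, for every subgroup $H$ of $G$ (each a $p$-group) we have
\[
\RNK_n(R[H]) \xrightarrow{\sim} \RNK_n(R_{(p)}[H]).
\]
This isomorphism comes from Corollary \ref{cor:localization} and Theorem \ref{thm:localization}: the group $\RNK_n(R[H])$ is $p$-primary torsion, so it equals its localization at $(p)$. Because the isomorphism is a group isomorphism, an integer annihilates one side exactly when it annihilates the other. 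In particular, $p^e$ annihilates $\RNK_n(R_{(p)}[C])$ for every cyclic subgroup $C$ of $G$. Note that $R_{(p)}$ is a $\BZ_{(p)}$-algebra, and the Frobenius-module structure over $\RG_0^{\BZ_{(p)}}(\BZ_{(p)}[H])$ described above applies to it.

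Next we apply Theorem \ref{reduction to cyclic} to the ring $R_{(p)}$ with $M=p^e$. This shows that $A(G)p^e$ annihilates $\RNK_n(R_{(p)}[G])$. Since $G$ is a non-cyclic $p$-group of order $p^m$, Lam's theorem \eqref{ArtinExpValue} gives $A(G)\mid p^{m-1}$. Hence $p^{e+m-1}$ annihilates $\RNK_n(R_{(p)}[G])$.

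Finally, we transport this back along the isomorphism \eqref{localizationiso} for $H=G$ to conclude that $p^{e+m-1}$ annihilates $\RNK_n(R[G])$.

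The only step needing any care is checking that the hypotheses of Theorem \ref{reduction to cyclic} hold for $R_{(p)}$, namely the Green-module structure with $A=\BZ_{(p)}$ and the induction formula \eqref{Indformula}. Both were set up in the preceding subsections for an arbitrary commutative $\BZ_{(p)}$-algebra, so no further work should be needed.
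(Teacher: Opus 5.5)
Your proposal is correct and matches the paper's proof. You replace $R$ by $R_{(p)}$ using \eqref{localizationiso} for $G$ and all its subgroups, then apply Theorem \ref{reduction to cyclic} with $M=p^e$ together with Lam's bound \eqref{ArtinExpValue}.
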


\begin{proof}
By \eqref{localizationiso} applied to $P=G$ and all its subgroups, we can replace $R$ by its localization $R_{(p)}$ and so we can assume that $R$ is a $\BZ_{(p)}$-algebra.
The result then follows from Theorem \ref{reduction to cyclic} and \eqref{ArtinExpValue}.
\end{proof}

\subsubsection{Reduction to the cyclic $p$-group case}

This now reduces us to considering   cyclic $p$-groups. The bulk of the rest of the paper 
concerns the proof of the following result:

  \begin{theorem}\label{thmexpo}   For each prime $p$ and $n\geq 0$, there is an integer $a(p, n)\geq 0$ 
which is equal to $0$ when $p\geq 3+n/2$ and is such that the following holds. 
For any  cyclic group $G$ of order $p^m$ and ring $R$ satisfying the assumptions of Theorem \ref{thm:MainIntro} with $p\not\in R^*$, we have 
\[
p^{(n+3)m+4+a(p, n) }\cdot \RNK_n(R[G])=(0).
\]
  \end{theorem}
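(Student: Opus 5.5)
We follow the strategy from the introduction. By \eqref{localizationiso} we may replace $R$ by $R_{(p)}$, so $R$ is a $\BZ_{(p)}$-algebra and $\RNK_n(R[G])$ is $p$-primary torsion. Let $M_G$ be the maximal order of $\Q[G]$; since $G$ is cyclic of order $p^m$, it is $\prod_{0\le i\le m}\BZ[\zeta_{p^i}]$. The hypotheses that $R$ is regular and flat and $R/p$ is regular imply that $M_G\otimes_\BZ R$ is regular. This is because $\BZ_{(p)}[\zeta_{p^i}]$ is a DVR with uniformizer $1-\zeta_{p^i}$ and residue field $\BF_p$. Locally, $R\otimes\BZ[\zeta_{p^i}]$ is then $R[x]/(\text{Eisenstein})$, and it is regular because $R/p$ is regular. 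Hence $\RNK_n(M_G\otimes R)=0$ by Theorem \ref{cor:easyregular}, and $\RNK_n(R[G])$ is the kernel-type term in the relative theory for $R[G]\subset M_G\otimes R$. The conductor of this inclusion contains $p^m$; in fact $|G|M_G\subset \BZ[G]$.

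\textbf{Step 1 (pro-excision).} Apply pro-excision (Geisser--Hesselholt, Morrow) to the Milnor-type square formed by $\A=R[G][t]$, $\M=(M_G\otimes R)[t]$ and their $p$-adic completions. The ideal $p\M$ is contained, up to the power $p^m$, in $\A$, so the square is pro-cartesian in the $p^a$-adic tower. Taking nil parts (relative to $(t)$) gives a fibration
\[
\RNK(R[G])\to \RNK^c(\wh R[G])\to \RNK^c(M_G\otimes\wh R).
\]
So it suffices to bound the exponent of the homotopy fiber $\RF$ of the right-hand map in degree $n$, and one degree up for the boundary.

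\textbf{Step 2 (localization at $(p)$).} Split $\pi_n\RF$ into a part coming from continuous $\RK$-theory relative to the ideal $(p)$ and a part coming from the reductions mod $p$, namely the nil $\RK$-theory of $(R/p)[G][t]$ versus $(M_G\otimes R/p)[t]$. For the mod $p$ part, $(R/p)[G]=(R/p)[x]/(x^{p^m})$ is a truncated polynomial ring over a regular $\BF_p$-algebra. The target $M_G\otimes R/p$ is a product of the rings $(R/p)[y]/(y^{p^{i-1}(p-1)})$, again truncated polynomial rings. The Hesselholt--Madsen computation of $\RK$ of truncated polynomial algebras, via big de Rham--Witt groups, expresses their nil-$\RK$ groups in terms of $W_s\Omega^*_{R/p}$ with $s$ bounded by the truncation length. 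These groups are killed by $p^{s}$ with $p^s\approx p^m$ per degree. Tracking degrees gives a contribution of about $p^{(n+1)m+O(1)}$.

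\textbf{Step 3 (the relative $(p)$ part).} Use the $p$-adic Beilinson fiber square of Antieau--Mathew--Morrow--Nikolaus. It identifies the continuous $\RK$-theory relative to $(p)$ of a $p$-complete ring, rationally and integrally up to a bounded error, with relative cyclic homology $\HC$ shifted by one. The error is controlled by a constant vanishing when $p\ge 3+n/2$; this is the source of $a(p,n)$, since the comparison is integral in degrees below roughly $p-2$ times $2$. Relative $\HC_{n-1}$ of $R[G][t]$ versus $M_G\otimes R[t]$, relative to $(t)$, can be computed directly. Hochschild homology of group rings of cyclic groups and of the maximal order is explicit, and the comparison map is an isomorphism after inverting $p$. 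The cokernel and kernel are annihilated by powers of $|G|$, with one factor of $p^m$ per Hochschild degree, giving about $p^{(n+2)m+O(1)}$ after combining with Step 2 in the long exact sequences. The factors of $p$ lost at each extension step account for the additive constant $4$.

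The main obstacle is Step 3. The integral form of the Beilinson comparison in the range $p\ge 3+n/2$, together with the precise annihilators of relative cyclic homology of $R[G]\to M_G\otimes R$, must be obtained with the explicit exponent $(n+3)m$. I would first handle this via the conductor bound $p^m M_G\subset R[G]$, applied degreewise in the Hochschild complex.
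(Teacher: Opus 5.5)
Your architecture is the paper's. You use pro-excision for $R[G][t]\subset (M_G\otimes_\BZ R)[t]$ to get $\RNK(R[G])\to\RNK^c(\wh R[G])\to\RNK^c(M_G\otimes_\BZ\wh R)$. You then localize at $(p)$ to split the fiber into a relative-$(p)$ piece and a mod-$p$ piece. The first is handled by the $p$-adic Beilinson comparison plus the conductor, the second by Hesselholt--Madsen. But the statement is about a specific exponent, and your proposal does not produce it, because the mod-$p$ estimate in Step 2 is wrong. Taken at face value, $p^{(n+1)m+O(1)}$ from Step 2 and one factor $p^m$ per tensor degree from Step 3 give about $p^{(2n+2)m+O(1)}$. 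That is strictly weaker than the claim once $n\ge 2$. Your ``$(n+2)m$'' does not follow from either step, and you never determine the additive constant, in particular that it is $4$ when $p\ge 3+n/2$.

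The missing point is how Hesselholt's sequence controls $\RNK_{i-1}(B[x]/(x^e))$, with $B=R/p$ and $e=p^\nu e'$. The nil group sits in a single three-term segment between two direct sums of relative de Rham--Witt groups, so its exponent is a product of just two annihilators, not one per degree.
\begin{itemize}
\item On the left, the terms are $W_s\Omega^{i-2\mu}_{(B[t],(t))}$ with $\mu\le\lfloor i/2\rfloor$ and $p^{s-1}j\le\mu e$. They are therefore killed by $p^{\lfloor\log_p(e\lfloor i/2\rfloor)\rfloor+1}$.
\item On the right, the terms are $W_{s-\nu}\Omega^{i-1-2\mu}_{(B[t],(t))}$ with $j\in e'I_p$. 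Hence $s-\nu\le\log_p\mu+1$, and the truncation length drops out.
\end{itemize}
So each nil group is killed by one factor of size about $e$, times $p^{2+O(\log_p n)}$, independent of the degree.

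Apply this to $R[G]/p\cong (R/p)[x]/(x^{p^m})$ in degree $n$, and to the factors $(R/p)[x]/(x^{p^\nu(p-1)})$, $\nu\le m-1$, of $M_G\otimes_\BZ R/p$ in degree $n+1$. You get $p^{m+2+b(p,n)}$ and $p^{m+2+b(p,n+1)}$. So the mod-$p$ piece is killed by $p^{2m+4+b(p,n)+b(p,n+1)}$, with $b(p,n)+b(p,n+1)\le 0$ for $p\ge 3+n/2$. This is where the $4$ comes from, not from ``factors lost at extension steps.''

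The whole $(n+1)m$ belongs to the relative-$(p)$ piece. $\HC_{n-1}(f,(p);\BZ_p)$ is a subquotient of sums of $\wh\M^{\hat\otimes k}/\wh\A^{\hat\otimes k}$ with $k\le n+1$. Each of these is killed by $p^{km}$ because $p^m\wh\M\subset\wh\A$. For your degreewise argument to give exactly this, you also need $f^{\hat\otimes k}$ to be injective. This follows from $\BZ_p$-flatness of $\wh R\langle t\rangle^{\hat\otimes k}$, and it makes the fiber of the map of cyclic objects a degreewise quotient with no kernel terms.

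Adding $(n+1)m+c(p,n)$ and $2m+4+b(p,n)+b(p,n+1)$ gives exactly the theorem, with $a(p,n)=b(p,n)+b(p,n+1)+c(p,n)$.
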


  Note that Theorem \ref{thmexpo} immediately implies 
  Theorem \ref{pcyclicIntro}.

  \subsubsection{Proof of Theorem \ref{thm:general},  assuming Theorem \ref{thmexpo}.}
  First we see that, by Theorem \ref{HambletonL} and Corollary \ref{cor:localization}, it is enough to assume that 
  $G$ is a $p$-group, i.e. $G=G_p$, for $p$ not a unit in $R$. 
   For $G_p$ cyclic, Theorem \ref{thmexpo}
  already gives a stronger result. Suppose now $G_p$ is not cyclic.
 Since by Theorem \ref{thmexpo},  $e(G_p)^{n+3}p^{4+a(p,n) }$ annihilates $\RNK_n(R[C])$ for all cyclic subgroups $C\subset G_p$,
  we can apply Corollary \ref{killNKR} with $e=(n+3)\log_p(e(G_p))+4+a(p,n)$.  This gives the result. \qed

Note that Theorem \ref{thmexpo} is stronger, i.e. it
gives a sharper (smaller) exponent bound than  Theorem \ref{thm:general} specialized to cyclic $p$-groups.

 \section{Pro-excision, Mayer-Vietoris and localization}\label{sec:ProMV}
 
 \subsection{Preliminaries}
 
In all of \S \ref{sec:ProMV} we assume that $G$ is a {\sl cyclic $p$-group} and that $R$ satisfies the assumptions of Theorem \ref{thm:MainIntro}. In this section, we reduce the proof of Theorem \ref{thmexpo} to 
producing two exponent bounds (A) and (B) as in \S \ref{ss:strategy}. 

We let $M_G$ be the maximal order in $\Q[G]$ containing the integral group ring $\BZ[G]$.

 \begin{lemma}\label{lemma:conductor}
Assume $|G|=p^m$. Then
\[
p^m M_G\subset  \BZ[G]\subset M_G,
\]
so $I=p^mM_G$ is a  common ideal of $\BZ[G]$ and $M_G$. We then also have
\[
 p^{2m}\BZ[G]\subset I^2\subset p^m\BZ[G]\subset I,
\]
as ideals of $\BZ[G]$.
\end{lemma}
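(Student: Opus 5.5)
The plan is to describe $M_G$ explicitly and use the trace form to bound the conductor. Write $G=\langle g\rangle$ of order $p^m$. Then $\Q[G]\cong\prod_{d=0}^{m}\Q(\zeta_{p^d})$ via $g\mapsto\zeta_{p^d}$, and the maximal order is $M_G=\prod_{d=0}^m\BZ[\zeta_{p^d}]$. Since $\BZ[G]$ is a finitely generated $\BZ$-module, hence integral over $\BZ$, and its image lies in $\prod_d \BZ[\zeta_{p^d}]$, we get $\BZ[G]\subset M_G$.

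For the inclusion $p^mM_G\subset\BZ[G]$, I would use the standard trace-form argument. Let $\mathrm{tr}:\Q[G]\to\Q$ be the regular trace, so that $\mathrm{tr}(\sum a_h h)=p^m a_1$. The dual lattice of $\BZ[G]$ with respect to $(x,y)\mapsto\mathrm{tr}(xy)$ is then $p^{-m}\BZ[G]$, because the basis $\{h\}$ has dual basis $\{p^{-m}h^{-1}\}$. Now take $x\in M_G$ and $y\in\BZ[G]\subset M_G$. Then $xy\in M_G$ is integral over $\BZ$, so $\mathrm{tr}(xy)\in\BZ$, since the regular trace of an integral element is an integer (it is a sum of conjugate algebraic integers, and it is rational). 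Hence $M_G$ is contained in the dual of $\BZ[G]$, that is, $M_G\subset p^{-m}\BZ[G]$, which gives $p^mM_G\subset\BZ[G]$.

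It follows that $I=p^mM_G$ is an ideal of $M_G$, and it is contained in $\BZ[G]$. Because $\BZ[G]\subset M_G$, it is also an ideal of $\BZ[G]$.

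The chain of inclusions is then immediate from $\BZ[G]\subset M_G$ and $p^mM_G\subset\BZ[G]$:
\begin{itemize}
\item $p^{2m}\BZ[G]\subset p^{2m}M_G$, and $p^{2m}M_G=I\cdot I=I^2$ because $M_G$ is a ring containing $1$;
\item $I^2=p^m\cdot(p^mM_G)\subset p^m\BZ[G]$;
\item $p^m\BZ[G]\subset p^mM_G=I$.
\end{itemize}
Each of these is an ideal of $\BZ[G]$, since $I$ is an ideal and so are its products and integer multiples.

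No step is a real obstacle. The only point needing care is the claim that $\mathrm{tr}(xy)\in\BZ$ for $xy\in M_G$. This follows because the regular trace of $\Q[G]$ restricted to each factor $\Q(\zeta_{p^d})$ is the field trace $\mathrm{Tr}_{\Q(\zeta_{p^d})/\Q}$, and that trace sends algebraic integers to $\BZ$.
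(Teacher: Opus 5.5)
Your proof is correct and is essentially the paper's argument. The paper cites Curtis--Reiner, Prop.~27.1 for $p^mM_G\subset\BZ[G]$; that result is proved by exactly your regular-trace argument (for $x=\sum a_h h$ in an order containing $\BZ[G]$, $\mathrm{tr}(xh^{-1})=p^m a_h\in\BZ$). The chain of ideal inclusions is then verified the same way as in the paper.
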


\begin{proof}
Recall that $M_G$ is the normalization of $\BZ[G]$ in $\Q[G]$. 
 The first claim follows from \cite[Prop. 27.1]{CR90}. 
For the second, observe that 
\[
I^2=p^{2m}M_G\cdot M_G\subset p^{2m}M_G=p^m I\subset p^m\BZ[G],
\]
and $p^m\BZ[G]\subset p^mM_G=I\subset \BZ[G]$.
\end{proof}

\begin{lemma}\label{regular}
The ring $M_G\otimes_\BZ R$ is regular.
\end{lemma}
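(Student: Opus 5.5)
The plan is to identify $M_G$ explicitly and reduce regularity to that of rings of integers in cyclotomic fields tensored with $R$. Since $G$ is cyclic of order $p^m$, we have $\Q[G]\cong \prod_{j=0}^{m}\Q(\zeta_{p^j})$, and the maximal order is the product of the rings of integers:
\[
M_G\cong \prod_{j=0}^m \BZ[\zeta_{p^j}].
\]
Hence $M_G\otimes_\BZ R\cong \prod_j R[\zeta_{p^j}]$ with $R[\zeta_{p^j}]=R[x]/(\Phi_{p^j}(x))$, where $\Phi_{p^j}$ is the cyclotomic polynomial. A finite product of rings is regular exactly when each factor is, so it suffices to show that each $S_j:=R[x]/(\Phi_{p^j}(x))$ is regular. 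For $j=0$ this is just $R$.

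Fix $j\geq 1$. The ring $S_j$ is Noetherian and is a finite free $R$-module. We check regularity locally at a maximal ideal $\mathfrak n$ of $S_j$, lying over $\mathfrak m\subset R$; write $d=\dim R_{\mathfrak m}$.

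\emph{Case $p\notin\mathfrak m$.} Here $p$ is invertible in $R_{\mathfrak m}$, so $\Phi_{p^j}$ is separable over $R_{\mathfrak m}$ (its discriminant is a power of $p$ up to sign). Thus $S_j$ is étale over $R$ at $\mathfrak n$, and an étale extension of a regular local ring is regular.

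\emph{Case $p\in\mathfrak m$.} This is the main point. We use the identity $\Phi_{p^j}(x)\equiv (x-1)^{\varphi(p^j)}\pmod p$. Set $y=x-1$. Then $S_j/pS_j\cong (R/p)[y]/(y^{\varphi(p^j)})$, so $\mathfrak n$ contains $y$, and $S_j/(y)\cong R/(\Phi_{p^j}(1))=R/(p)$. Since $\Phi_{p^j}$ is Eisenstein at $p$ in the variable $y$, we have $p\in (y)$ in $S_j$; more precisely $p=y^{\varphi(p^j)}\cdot(\text{unit})$ locally at $\mathfrak n$, because the remaining coefficients are divisible by $p$. Hence $\mathfrak n=(\mathfrak m, y)=(\bar{\mathfrak m}\text{ lifted}, y)$, where $\bar{\mathfrak m}$ is the maximal ideal of $(R/p)_{\mathfrak m}$.

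Now $R$ is flat over $\BZ$, so $p$ is a nonzerodivisor in $R_{\mathfrak m}$. Since $R_{\mathfrak m}$ and $(R/p)_{\mathfrak m}$ are both regular, we get $\dim (R/p)_{\mathfrak m}=d-1$, and $\bar{\mathfrak m}$ is generated by $d-1$ elements $\bar r_1,\dots,\bar r_{d-1}$. Lift these to $r_i\in R$. Then $\mathfrak n S_{j,\mathfrak n}$ is generated by $r_1,\dots,r_{d-1},y$, because $(S_j)_{\mathfrak n}/(y)=(R/p)_{\mathfrak m}$, whose maximal ideal is generated by the $\bar r_i$. On the other hand $\dim (S_j)_{\mathfrak n}=d$, since $S_j$ is finite and torsion-free over $R$, hence integral and flat over it. So the maximal ideal is generated by $\dim$ many elements, and $(S_j)_{\mathfrak n}$ is regular.

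The main obstacle is this ramified case. The key input is precisely the hypothesis that $R/p$ is regular, together with the Eisenstein property of $\Phi_{p^j}$; note that $\mathfrak m$ itself need not contain $p$ minimally. The dimension count should be checked carefully: the element $p$ becomes redundant because it lies in $(y)$ locally, and $S_j$ is free of finite rank over $R$, so $\dim (S_j)_{\mathfrak n}=\dim R_{\mathfrak m}$ by going up and going down for flat integral extensions.
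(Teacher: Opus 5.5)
Your proof is correct and follows the same route as the paper: you decompose $M_G\otimes_\BZ R$ into the factors $R[\zeta_{p^j}]$, use étaleness away from $p$, and at primes over $p$ use that the uniformizer $y=\zeta_{p^j}-1$ cuts out the regular ring $R/p$. The only difference is in the last step: you verify regularity by an explicit generator and dimension count, using flatness of $S_j$ over $R$. The paper instead invokes the standard fact that a Noetherian local ring is regular if its maximal ideal contains a nonzerodivisor with regular quotient, using flatness of $R[\zeta_{p^r}]$ over $\BZ[\zeta_{p^r}]$.
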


\begin{proof}  This follows from the proof of Proposition \ref{propB}. However, in the cyclic case the argument is a lot simpler: The ring $  M_G\otimes_\BZ R$ is the direct product of rings of the form $R[\zeta_{p^r}]:=R\otimes_\BZ \BZ[\zeta_{p^r}]$, with $\BZ[\zeta_{p^r}]$ the $p^r$-th cyclotomic ring of integers. Let $\varpi_r$ be a uniformizer of $\BZ_p[\zeta_{p^r}]$. Then 
$
R[\zeta_{p^r}]/(\varpi_r)=R/p,
$
which is assumed to be regular. Since $R$ is $\BZ$-flat,   $R[\zeta_{p^r}]$ is a flat $\BZ[\zeta_{p^r}]$-algebra. Since $R[\zeta_{p^r}]/(\varpi_r)$ is regular, $R[1/p]$ is regular, and $\BZ[1/p]\to \BZ[\zeta_{p^r}][1/p]$ is \'etale, we conclude that $R[\zeta_{p^r}]$ is regular. 
 \end{proof}

\subsection{Pro-excision and Mayer-Vietoris} 

In what follows, for simplicity, we set
\[
\A=R[t][G]=\BZ[G]\otimes_\BZ R[t], \quad \M= M_G\otimes_\BZ R[t]. 
\]
We also set 
\[
J=I\otimes_\BZ R[t]=p^m M_G\otimes_\BZ R[t]\subset \A\subset \M,
\]
which is a common ideal of $\A$ and $\M$.
 We will apply pro-excision to the pairs $(\A, J)$ and $(\M, J)$.
 Indeed, by \cite[Thm. 0.2, Thm. 0.3, Cor. 0.4]{Morrowpro2} (which builds on results of \cite{GHbi}) 
 pro-excision for $J$ holds. As in Remark 1.5 of \cite{Morrowpro}, we see that
 this implies that the homotopy limit
 \[
 \prolim_a \RK(\A, \M, J^a)
 \]
 of bi-relative $\RK$-theory spectra is contractible. This gives, as in \emph{loc. cit.},  
a
homotopy cartesian diagram of (connective) spectra 
 \begin{equation}\label{CD}
\begin{aligned}
\xymatrix{
\RK(\A) \ar[r]\ar[d] & \RK(\M) \ar[d]\\
\RK^c(\wh \A)\ar[r] &\RK^c(\wh \M)\\
}
\end{aligned}
\end{equation}
with
\[
\RK^c(\wh \A):=\prolim_a \RK( \A/J^a),\qquad  \RK^c(\wh \M):=\prolim_a \RK(\M/J^a).
\]
Here, $\wh \A=\prolim_a \A/J^a\simeq \prolim_a \A/p^a$ and $\wh \M=\prolim_a \M/J^a\simeq \prolim_a \M/p^a$,
i.e. they are the $p$-adic completions of $A$ and $\M$ respectively, by cofinality using the inclusion
 in   Lemma \ref{lemma:conductor}.
Also, similarly
\[
\RK^c(\wh \A) \simeq \prolim_a \RK( \A/p^a), \quad
 \RK^c(\wh \M) \simeq \prolim_a \RK(\M/p^a).
\]
Set $\RF=\RK(\A, \M)$ for the fiber of $\RK(\A)\to \RK(\M)$. Since \eqref{CD} is homotopy Cartesian, 
the two horizontal maps in \eqref{CD} have homotopically equivalent fibers. Hence, there is also a fiber sequence  
\begin{equation}\label{fib1}
\RF\to \RK^c(\wh \A)\to \RK^c(\wh \M).
\end{equation}
We note from \eqref{CD} that we also obtain an exact Mayer-Vietoris sequence of homotopy groups
\begin{equation}\label{MV}
 \RK_{n+1}(\M)\oplus \RK^c_{n+1}(\wh \A)\to \RK^c_{n+1}(\wh \M)\to \RK_n(\A) \to \RK_{n}(\M)\oplus \RK^c_{n}(\wh \A)\to \RK^c_{n}(\wh \M)\ .
\end{equation}
Set
$
\K_n:=\ker( \RK_n(\A) \to \RK_{n}(\M)).
$
Then, \eqref{MV} gives  an exact sequence
\begin{equation}\label{MV2}
{\rm {coker}}(\RK^c_{n+1}(\wh \A)\to \RK^c_{n+1}(\wh \M))\to \K_n\to \ker(\RK^c_{n}(\wh \A)\to \RK^c_{n}(\wh \M))\ .
\end{equation}
Since $ M_G\otimes_\BZ R$ is regular,
$
\RK_n(\M)=\RK_n(( M_G\otimes_\BZ R)[t])=\RK_n( M_G\otimes_\BZ R).
$
So $\K_n$ is also the kernel of the composition
\[
\RK_n(\A)\xrightarrow{t\mapsto 0}  \RK_n(R[G])\to \RK_n(M_G\otimes_\BZ R).
\]
We conclude that
$\RNK_n(R[G])$ is a subgroup of $\K_n$.

\begin{Remark}
{\rm  The sequences \eqref{MV} and \eqref{MV2} are used
only in \S \ref{ss:n1case} and not in the argument of the proof of Theorem \ref{thmexpo}. For that, we argue directly with Nil $\RK$-groups as follows.}
\end{Remark}

 \subsection{A version for Nil $\RK$-theory} For a ring $B$, we set $\RNK(B):=\RK(B[t], (t))$ and 
\[
\RNK^c(\wh B):= \prolim_a \RNK( B/ p ^a):=\prolim_a \RK( B[t]/p^a, (t)).
\] 
We have natural splittings
\[
\RK(B[t])=\RNK(B)\oplus \RK(B),\quad \RK^c(\wh B\lan t\ran)=\RNK^c(\wh B)\oplus \RK^c(\wh B),
\]
where $\wh B=\prolim_a B/p^a$, $ \wh B\lan t\ran=\prolim_a B[t]/p^a$. 

The fiber $\RF={\rm Fib}(\RK(A)\to \RK(\M))$ decomposes 
\[
\RF=\RNF\oplus {\rm Fib}(\RK(R[G])\to \RK(M_G\otimes_\BZ R)),
\]
where $\RNF$ is the fiber of $\RNK(R[G])\to \RNK(M_G\otimes_\BZ R)$. However, by Lemma \ref{regular} and  the homotopy theorem (Theorem \ref{cor:easyregular}), $\RNK(M_G\otimes_\BZ R)$ is contractible. Hence, $\RNK(R[G]) \simeq \RNF$. Since the diagram \eqref{CD} respects these splittings we also have a fibration
\begin{equation}\label{nfib1}
\RNF\to   \RNK^c(\wh R[G])\to \RNK^c(M_G\otimes_\BZ \wh R).
\end{equation}

\subsection{Localization}
We now consider $p$-adically continuous $\RK$-theory relative to the ideal $(p)$ in $\A$ and in $\M$. We set
\[
\RK^c(\wh \A, (p)):=\prolim_a \RK(\A/p^a, (p)),\quad \RK^c(\wh \M, (p)):=\prolim_a \RK(\M/p^a, (p)).
\]
Recall that the fiber $\RF$ above is equivalent to the fiber of $\RK^c(\wh \A)\to \RK^c(\wh \M)$. We have a diagram of fibrations
 \begin{equation}\label{CDfib1}
\begin{aligned}
\xymatrix{
\RF(-,(p))\ar[r]\ar[d] & \RF\ar[r]\ar[d] & \RF(/p)\ar[d]\\
\RK^c(\wh \A, (p))\ar[r]\ar[d] &\RK^c(\wh \A) \ar[r]\ar[d] & \RK( \A/p)\ar[d]\\
\RK^c(\wh \M, (p)) \ar[r] & \RK^c(\wh \M)\ar[r] & \RK( \M/p)\\
}
\end{aligned}
\end{equation}
which defines $\RF(-, (p))$ and $\RF(/p)$. 

Similarly, we set
\[
\RNK^c(\wh R[G], (p)):=\prolim_a \RNK(R[G]/p^a, (p)),
\]
\[
  \RNK^c(M_G\otimes_\BZ \wh R, (p)):=\prolim_a \RNK(M_G\otimes_\BZ  R/p^a, (p)).
\]
We also have a diagram of fibrations
\begin{equation}\label{CDfib2}
\begin{aligned}
\xymatrix{
\RNF(-,(p))\ar[r]\ar[d] & \RNF\ar[r]\ar[d] & \RNF(/p)\ar[d]\\
\RNK^c(\wh R[G], (p))\ar[r]\ar[d] &\RNK^c(\wh R[G]) \ar[r]\ar[d] & \RNK( R[G]/p)\ar[d]\\
\RNK^c(M_G\otimes_\BZ \wh R, (p)) \ar[r] & \RNK^c(M_G\otimes_\BZ \wh R)\ar[r] &\RNK( M_G\otimes_\BZ R/p)\\
}
\end{aligned}
\end{equation}
which  splits off as a direct summand of \eqref{CDfib1}.

Using $\RNK(R[G])\simeq \RNF$, and the resulting exact sequence
\begin{equation}\label{pinFexact}
  \pi_n(\RNF(-, (p)))\to  \RNK_n(R[G])\simeq \pi_n(\RNF)\to \pi_n(\RNF(/p))  ,
\end{equation}
we see that it is enough to bound the exponents of 
$\pi_n(\RNF(-, (p)))$ (or of its supergroup $\pi_n(\RF(-,(p)))$) and of $\pi_n(\RNF(/p))$.
Note that by the above, 
we have an exact sequence
\begin{equation}\label{pinF/p}
 \RNK_{n+1}(M_G\otimes_\BZ R/p) \to  \pi_n(\RNF(/p))\to \RNK_n(R[G]/p).
\end{equation}

 \subsection{The strategy for the proof of Theorem \ref{thmexpo} for cyclic $p$-groups}\label{ss:strategy}
   Our    goal is to find an exponent that annihilates:
  \begin{itemize}
  \item[A)]  $\pi_n(\RF(-, (p)))$, by comparing with relative
  cyclic homology,  
  
  \item[B)]  $\pi_n(\RNF(/p))$, by using \eqref{pinF/p} and giving exponents for the groups $\RNK_n(R[G]/p)$ and $\RNK_{n+1}(M_G\otimes_\BZ R/p)$. 
    \end{itemize}
  
  Combining these with \eqref{pinFexact}  gives an annihilator of  $\RNK_n(R[G])$ when $G$ is a cyclic $p$-group  and thereby leads to a proof of Theorem \ref{thmexpo} (see \S \ref{ss:Finale}).

    In the next section we discuss (B).

\section{$\RK$-theory of some truncated polynomial ${\mathbb F}_p$-algebras}\label{s:truncated}

We continue to assume that $G$ is a cyclic $p$-group and that $R$ satisfies the assumptions of Theorem \ref{thm:MainIntro}. We will use results of Hesselholt-Madsen \cite{HesseMad} and Hesselholt \cite{HesseHandbook} to find  exponents for the groups 
$\RNK_{n+1}(M_G\otimes_\BZ R/p)$ and $\RNK_n(R[G]/p)$.

\subsection{Nil $\RK$-groups of some ${\mathbb F}_p$-algebras}
The rings $M_G\otimes_\BZ R/p$ and $R[G]/p$ are both direct products of ${\mathbb F}_p$-algebras of the form $B[x]/(x^e)$ in which $B=R/p$ is regular Noetherian  and  $e\geq 1$.   

We will denote, as usual, by $\lfloor \alpha \rfloor$ the \emph{floor} of the real number $\alpha$, i.e. the greatest integer less than or equal to $\alpha$. 
For $x\geq 0$, we define 
\[
\log^*_p(x)=\begin{cases} \log_p(x), & \hbox{\rm if}\ x>0,\\ -1, &\hbox{\rm if}\ x=0,
\end{cases}
\]
with $\log_p(x)$ the real logarithm of $x$ with base $p$.
 
\begin{proposition}\label{lem:dumbbound}  Suppose $i \ge 1$, $B$ is a regular noetherian ring of characteristic $p$  and $e = p^\nu e'$ with $e'$ prime to $p$.  Then the group $\RNK_{i-1}(B[x]/(x^e) )$ is annihilated by $p^z$ when 
\begin{equation}
\label{eq:zdef}
z = \lfloor \log^*_p(e\lfloor i/2\rfloor ) \rfloor  + \lfloor \log^*_p(\lfloor (i-1)/2\rfloor) \rfloor + 2.
\end{equation}  
\end{proposition}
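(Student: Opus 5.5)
The plan is to reduce to relative $\RK$-groups of truncated polynomial algebras over the regular $\BF_p$-algebra $B[t]$, and then read off exponents from the Hesselholt--Madsen description of these groups via de Rham--Witt complexes.

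\emph{Reduction to relative groups.} Set $B'=B[t]$; it is again regular Noetherian of characteristic $p$. By Theorem \ref{cor:easyregular}, $\RNK_*(B)=0$, and $\RK_*(B)=\RK_*(B')$ by homotopy invariance. Since $B[x]/(x^e)\to B$ is split by $B\subset B[x]/(x^e)$, the same holds with $B'$ in place of $B$. Hence
\[
\RNK_{i-1}(B[x]/(x^e))=\ker\Bigl(\RK_{i-1}(B'[x]/(x^e),(x))\xrightarrow{t\mapsto 0}\RK_{i-1}(B[x]/(x^e),(x))\Bigr),
\]
and this kernel is a direct summand. It therefore suffices to find a power of $p$ that annihilates $\RK_{i-1}(C[x]/(x^e),(x))$ for every regular Noetherian $\BF_p$-algebra $C$, uniformly in $C$; applying this with $C=B'$ gives the claim.

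\emph{Hesselholt--Madsen input.} By \cite{HesseMad} and \cite{HesseHandbook}, for regular $C$ of characteristic $p$ (after passing to ind-smooth algebras via N\'eron--Popescu if needed), the relative group $\RK_{i-1}(C[x]/(x^e),(x))$ is built, through a filtration or exact sequence, out of pieces of the form
\[
\mathbb{W}_{me}\Omega^{i-2m}_C\big/ V_e\,\mathbb{W}_m\Omega^{i-2m}_C,
\qquad 1\le m\le \lfloor i/2\rfloor ,
\]
together with a second family of de Rham--Witt terms of length at most $\lfloor (i-1)/2\rfloor$. The latter arise from the odd-degree part, i.e. the terms with $i-1-2m\geq 0$.

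\emph{Exponents of the pieces.} Decompose the big de Rham--Witt groups $p$-typically: $\mathbb{W}_N\Omega^j_C$ is a product of copies of $W_s\Omega^j_C$ with $s\le \lfloor\log_p N\rfloor+1$. In characteristic $p$, the group $W_s\Omega^j_C$ is killed by $p^s$. Consequently the first family, where $N\le e\lfloor i/2\rfloor$, is killed by $p^{\lfloor \log_p(e\lfloor i/2\rfloor)\rfloor+1}$, and the second family by $p^{\lfloor \log_p\lfloor (i-1)/2\rfloor\rfloor+1}$. When a range is empty the corresponding family vanishes. The convention $\log^*_p(0)=-1$ makes its contribution $0$, which also covers small $i$, e.g. $i=1,2$. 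An extension of a group killed by $p^a$ by a group killed by $p^b$ is killed by $p^{a+b}$, and this yields exactly $z$ in \eqref{eq:zdef}.

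\emph{Main obstacle.} The hard step is the second one. I need the precise form of Hesselholt's theorem for non-perfect regular $C$: which de Rham--Witt quotients occur, and whether one gets an isomorphism or only a filtration whose graded pieces split into the two families above. I would first check the statement in \cite{HesseHandbook} for smooth $\BF_p$-algebras and extend to regular Noetherian $C$ by Popescu and continuity of $\RK$-theory under filtered colimits. I would then verify carefully how the index ranges $m\le\lfloor i/2\rfloor$ and $m\le\lfloor (i-1)/2\rfloor$ arise. The remaining steps are formal.
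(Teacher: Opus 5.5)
Your proposal is correct and follows the paper's proof. Like the paper, you use Hesselholt's long exact sequence: the cokernel-of-$V_e$ side (lengths $me$ with $m\le\lfloor i/2\rfloor$) is killed by $p^{\lfloor\log_p(e\lfloor i/2\rfloor)\rfloor+1}$, the kernel side (lengths $m\le\lfloor (i-1)/2\rfloor$) by $p^{\lfloor\log^*_p\lfloor(i-1)/2\rfloor\rfloor+1}$, both via the $p$-typical decomposition and $p^sW_s\Omega=0$, and the two bounds add to $z$. The only difference is minor: you apply the relative-to-$(x)$ sequence over $C=B[t]$ and split off the nil part as a direct summand, whereas the paper quotes Hesselholt's nil version (Corollary 14) with relative de Rham--Witt groups of $(B[t],(t))$; both give the same bound.
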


\begin{proof} In \cite[Corollary 14]{HesseHandbook} one finds the following exact sequence involving the  nil $\RK$-group $\RNK_{i-1}( B[x]/(x^e) )$ when $e = p^{\nu} e'$ with $e'$ prime to $p$:

  Let $I_p$ be the set of positive integers that are prime to $p$.  
  Then there is a natural long exact sequence
of abelian groups 
$$
\cdots \rightarrow \bigoplus_{m \ge 1} \bigoplus_{j \in e' I_p} W_{s- \nu} 
\Omega_{(B[t], (t))}^{i - 2m}
\xrightarrow{e' V^\nu} \bigoplus_{m \ge 1} \bigoplus_{j \in I_p} W_{s} \Omega_{(B[t], (t))}^{i - 2m}$$
$$\xrightarrow{} \RNK_{i-1}(B[x]/(x^e) ) \xrightarrow{\partial} 
\bigoplus_{m \ge 1} \bigoplus_{j \in e' I_p} W_{s- \nu} \Omega_{(B[t], (t))}^{i - 1 - 2m}\xrightarrow{}  \cdots
$$
in which $s   = s(m,j)$ is the unique integer such that $p^{s-1} j \le me < p^s j$ and the group
$W_d \Omega_{(B[t], (t))}^a$ of relative de Rham-Witt differentials is defined to be $\{0\}$ if $d < 0$ or $a < 0$.  

Assume first that $i\geq 3$, so that $\lfloor i/2 \rfloor$ and $\lfloor (i-1)/2 \rfloor$ are both $\geq 1$. 

We find that the only $m$ that can contribute to the terms immediately to the left of the group $\RNK_{i-1}(B[x]/(x^e) )$ in the above sequence are those in the range
$1 \le m \le \lfloor i/2 \rfloor$.  In order for $s = s(m,j)$ to be non-negative for some $j \in I_p$ we must have $p^{-1} j \le p^{s-1} j \le me   < p^s j$.    Now $p^{s-1} j \le me$ implies
 $s \le  \mathrm{log}_p(me/j) + 1 \le \mathrm{log}_p(me)+1 $.  We now observe that since $B$ is an $\mathbb{F}_p$-algebra, the group $W_s \Omega_{(B[t], (t))}^{i - 2m}$ is a module for the Witt vectors $W_s(\mathbb{F}_p)$.  Hence $p^s$ annihilates $W_s \Omega_{(B[t], (t))}^{i - 2m}$. We conclude from this that the highest power of $p$ less than or equal to 
 $$
 p^{\mathrm{log}_p(me)+ 1} = pme
 $$
 is an annihilator of the term $W_s \Omega_{(B[t], (t))}^{i - 2m}$ immediately to the left of the group  $\RNK_{i-1}(B[x]/(x^e))$ in the above 
 long exact sequence.  Since $m \le \lfloor i/2 \rfloor$, the exponent of this power is bounded by
 $\lfloor \mathrm{log}_p(e\lfloor i/2\rfloor ) \rfloor +  1$.

 We now apply a similar analysis to the terms to the right of the group $\RNK_{i-1}(B[x]/(x^e) )$.  Only the terms with $1 \le m \le \lfloor (i-1)/2 \rfloor$ can contribute. Since $j \in e'I_p$ in this case we have $j \ge e'$.  So
 $$
 s - \nu \le  \mathrm{log}_p(me/j) + 1 - \nu \le \mathrm{log}_p(me/e')+1 - \nu =  
 \mathrm{log}_p(m)  + 1.
 $$  
Hence the  highest power of $p$ less than or equal to $pm$ is an annihilator for these terms.  The exponent of this power is bounded by $\lfloor \mathrm{log}_p(\lfloor (i-1)/2\rfloor) \rfloor + 1$.

 For $i=1$, $\RNK_{0}(B[x]/(x^e) )=(0)$. For $i=2$, the terms on the right of $\RNK_{1}(B[x]/(x^e) )$ are trivial
 while the terms immediately to the left can be analyzed exactly as above.
 
Putting these results together we find that the middle  $\RNK_{i-1}(B[x]/(x^e) )$ in the above 
 long exact sequence is annihilated by $p^z$ when $z$ is as in (\ref{eq:zdef}), for all $i\geq 1$.
 \end{proof}

 \subsection{Exponents for $\RNK_{n+1}(M_G\otimes_\BZ R/p)$ and $\RNK_n(R[G]/p)$.}
 
 We now use Proposition \ref{lem:dumbbound}  to obtain information about these groups.  

Set
\[
b(p, n)=\lfloor \log^*_p(\lfloor (n+1)/2\rfloor ) \rfloor  + \lfloor \log^*_p(\lfloor n/2\rfloor) \rfloor.
\]
We have $b(p,0)=-2$, $b(p,1)=-1$, and $b(p, n)=0$ if $2\leq n< 2p-1$. In particular,
 $b(p, n)=0$ if $n\geq 2$ and $p> 1/2+n/2$.

\begin{corollary}\label{cor:truncated}
Suppose that $G$ is cyclic of order $p^m $, $m\geq 1$, and $R/p$ is regular and Noetherian. If $n\geq 0$, the groups
$\RNK_{n+1}(M_G\otimes_\BZ R /p )$  and $\RNK_n(R[G]/p)$ are annihilated by 
$p^{m+2+b(p, n+1) }$
and $p^{m+2+b(p, n) }$, respectively.  
\end{corollary}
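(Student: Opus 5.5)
We plan to decompose both rings as finite products of truncated polynomial algebras $B[x]/(x^e)$ with $B=R/p$. Then we apply Proposition \ref{lem:dumbbound} factor by factor, using that $\RNK$ commutes with finite products of rings. The key point is to identify the possible $e$ and check that $e\le p^m$ in all cases, so that $\lfloor \log_p(e\lfloor i/2\rfloor)\rfloor\le m+\lfloor\log_p\lfloor i/2\rfloor\rfloor$.

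\emph{The group ring.} Since $G\simeq \BZ/p^m$, we have $\BZ[G]=\BZ[y]/(y^{p^m}-1)$. Hence $R[G]/p=B[y]/(y^{p^m}-1)=B[y]/((y-1)^{p^m})\simeq B[x]/(x^{p^m})$ with $x=y-1$, so $e=p^m$. We apply Proposition \ref{lem:dumbbound} with $i=n+1$:
\[
z=\lfloor \log^*_p(p^m\lfloor (n+1)/2\rfloor)\rfloor+\lfloor\log^*_p(\lfloor n/2\rfloor)\rfloor+2 .
\]
If $\lfloor (n+1)/2\rfloor\ge1$, the first term equals $m+\lfloor\log_p\lfloor (n+1)/2\rfloor\rfloor$, and so $z=m+2+b(p,n)$. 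If $n=0$, then $\RNK_0(B[x]/(x^e))=0$ (this is the $i=1$ case in the proof of the proposition), and the claim holds trivially. Note that the formula with $\log^*(0)=-1$ would give $-1$ rather than $m-1$, so this degenerate case has to be handled separately. Either way, $p^{m+2+b(p,n)}$ annihilates the group.

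\emph{The maximal order.} We have $M_G=\prod_{r=0}^m \BZ[\zeta_{p^r}]$, so $M_G\otimes R/p=\prod_r R[\zeta_{p^r}]/p$. Since $\Phi_{p^r}(y)\equiv (y-1)^{\phi(p^r)}\bmod p$, we get $R[\zeta_{p^r}]/p\simeq B[x]/(x^{e_r})$ with $e_r=p^{r-1}(p-1)<p^m$ for $r\ge1$, and $e_0=1$. We apply Proposition \ref{lem:dumbbound} with $i=n+2$ and $e=e_r$. Using $e_r\le p^m$, the first term is at most $m+\lfloor\log_p\lfloor (n+2)/2\rfloor\rfloor$, since $\lfloor(n+2)/2\rfloor\ge1$. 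The second term is $\lfloor\log^*_p\lfloor (n+1)/2\rfloor\rfloor$. Their sum plus $2$ is at most $m+2+b(p,n+1)$. The annihilator of the product is the maximum over factors, so $p^{m+2+b(p,n+1)}$ kills $\RNK_{n+1}(M_G\otimes_\BZ R/p)$.

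The only delicate points are bookkeeping: checking monotonicity of $\log^*_p$ in $e$, and treating the edge cases $n=0$ (and $n=1$, where $\lfloor n/2\rfloor=0$ contributes $-1$ consistently in both $z$ and $b$). We also need to confirm that regularity of $B=R/p$ is all that Proposition \ref{lem:dumbbound} requires, which it is.
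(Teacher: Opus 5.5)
Your proposal is correct and follows essentially the paper's own proof: you decompose $R[G]/p$ and $M_G\otimes_\BZ R/p$ into truncated polynomial algebras $(R/p)[x]/(x^e)$ and apply Proposition \ref{lem:dumbbound} factor by factor, with your observation $e_r=p^{r-1}(p-1)<p^m$ doing the job of the paper's inequality $\lfloor \log_p(p-1)+\alpha\rfloor\le 1+\lfloor\alpha\rfloor$. Your separate treatment of $n=0$ is harmless but not actually needed, since there the formula gives $z=0\le m=m+2+b(p,0)$ directly.
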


\begin{proof}  As noted above,  $M_G\otimes_\BZ R /p$ is a product of finitely many rings each isomorphic to $R/p[x]/(x^e)$ with $e = 1$ or $e = p^\nu \cdot (p-1)$ for some $0 \le \nu \le m-1$. Also  $R[G]/p$ is isomorphic to $R/p[x]/(x^{p^m})$. Applying  Proposition 
\ref{lem:dumbbound}, and using the obvious inequality $\lfloor \log_p(p-1)+\alpha\rfloor \leq 1+\lfloor \alpha\rfloor$, for $\alpha\geq 0$, now gives the result.
\end{proof}

\begin{Remark}\label{rem:smallerexpon}
{\rm  For $n=1$, Proposition 
\ref{lem:dumbbound} actually gives that  the group $\RNK_{2}(M_G\otimes_\BZ R /p)$ is annihilated by the smaller power $p^{m+1 }$.
 For $n=0$, $\RNK_{1}(M_G\otimes_\BZ R /p)$ is annihilated by  $p^{m }$ and $\RNK_{0}( R[G] /p)=(0)$.}
\end{Remark}

From Corollary \ref{cor:truncated}, \eqref{pinF/p} and the above remark, we obtain

\begin{corollary}\label{cor:truncated2}
The group $\pi_n(\RNF(/p))$ is annihilated by
$
 p^{2m+4+b(p, n)+b(p, n+1) }  
$
if $n\geq 2$, and by the smaller power $p^{2m+2}$, if $n=1$, resp. $p^m$, if $n=0$.
If $p\geq 3+n/2$, the group $\pi_n(\RNF(/p))$ is annihilated by $p^{2m+4}$.\qed
\end{corollary}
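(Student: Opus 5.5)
The bound comes directly from the exact sequence \eqref{pinF/p},
\[
\RNK_{n+1}(M_G\otimes_\BZ R/p) \xrightarrow{\ \partial\ } \pi_n(\RNF(/p))\xrightarrow{\ \rho\ } \RNK_n(R[G]/p).
\]
If $p^{\alpha}$ annihilates the left-hand group and $p^{\beta}$ annihilates the right-hand group, then $p^{\alpha+\beta}$ annihilates the middle. Indeed, for $x\in\pi_n(\RNF(/p))$ we have $\rho(p^\beta x)=p^\beta\rho(x)=0$. So $p^\beta x=\partial(y)$ for some $y$, and then $p^{\alpha+\beta}x=\partial(p^\alpha y)=0$. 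The proof therefore reduces to substituting the exponents we already have, case by case in $n$.

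For $n\ge 2$ I would use Corollary \ref{cor:truncated}. It gives $\alpha=m+2+b(p,n+1)$ and $\beta=m+2+b(p,n)$, whose sum is $2m+4+b(p,n)+b(p,n+1)$. This is the stated bound.

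For $n=1$ I would use Remark \ref{rem:smallerexpon}, which gives $\alpha=m+1$ for $\RNK_2(M_G\otimes_\BZ R/p)$. Corollary \ref{cor:truncated} gives $\beta=m+2+b(p,1)=m+1$ for $\RNK_1(R[G]/p)$, using $b(p,1)=-1$. The total is $p^{2m+2}$.

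For $n=0$ the right-hand group $\RNK_0(R[G]/p)$ vanishes by the remark, so $\beta=0$. The left-hand group $\RNK_1(M_G\otimes_\BZ R/p)$ is killed by $p^m$, so $\pi_0(\RNF(/p))$ is killed by $p^m$.

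For the last sentence, suppose $p\ge 3+n/2$. We have $b(p,k)=0$ whenever $2\le k<2p-1$. If $n\ge 2$, then $n+1<2p-1$ is equivalent to $p>1+n/2$, which holds, so $b(p,n)=b(p,n+1)=0$ and the exponent is $2m+4$. For $n=0,1$ the exponents already found, $m$ and $2m+2$, are at most $2m+4$. There is no real obstacle here; the only thing to check is this bookkeeping of the values of $b(p,\cdot)$ at the small values of $n$.
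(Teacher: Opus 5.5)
Your proposal is correct and is essentially the paper's argument: the paper also obtains the corollary directly from the exact sequence \eqref{pinF/p}, multiplying the annihilators of the two outer groups supplied by Corollary \ref{cor:truncated} and Remark \ref{rem:smallerexpon}. Your case-by-case bookkeeping checks out, including the sharper bound $p^{m+1}$ for $\RNK_2(M_G\otimes_\BZ R/p)$ when $n=1$ and the vanishing of $b(p,n)$ and $b(p,n+1)$ when $p\geq 3+n/2$.
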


\section{Cyclic homology and the $p$-adic Beilinson isomorphism}\label{s:Beilinson}

Here we realize (A) of \S \ref{ss:strategy}. Our main tool is the ``$p$-adic Beilinson isomorphism" of \cite{NikolausBFSq} between $p$-adic relative $\RK$-theory and $p$-adic relative cyclic homology.

\subsection{The $p$-adic Beilinson isomorphism}\label{ss:cont}

Assume $S$ is a commutative ring which is $p$-adically complete and $p$-torsion free.
Recall the definition
\[
\RK^c(S, (p))=\varprojlim_a \RK(S/p^a, (p)).
\]

The following is a corollary of the work in  \cite{henselian}  and \cite{NikolausBFSq}. 
 
\begin{theorem}\label{masterprop} 
Let $S$ be a ring as above. There is a finite zig-zag of quasi-isogenies of spectra, in the sense of \cite[\S 2.3]{NikolausBFSq},
between $\RK^c(S, (p))$ and $\Sigma\HC(S, (p);\BZ_p)$, which are natural in $S$. Each quasi-isogeny induces an equivalence of $(2p-5)$-truncations
and hence the zig-zag induces an equivalence
\[
\tau_{\leq 2p-5}\,\RK^c(S, (p))\xrightarrow{\sim} \tau_{\leq 2p-5}\,\Sigma\HC(S, (p);\BZ_p)
\] 
after $(2p-5)$-truncation. As a result, when $n\leq 2p-5$, there is an isomorphism
\[
 \RK^c_n(S, (p))\xrightarrow{ \sim\ }{\rm HC}_{n-1}(S, (p);\BZ_p)
\]
 which is natural in $S$ as above.
\end{theorem}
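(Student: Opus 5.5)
This statement is cited as a consequence of \cite{henselian} and \cite{NikolausBFSq}, so the plan is to assemble their results rather than prove anything new. The starting point is the $p$-adic Beilinson fiber square of \cite{NikolausBFSq}. For a ring $S$ that is $p$-adically complete (more generally henselian along $(p)$), it gives a natural cartesian square of $p$-completed spectra with rows
\[
\RK(S;\BZ_p)\to \mathrm{TC}(S;\BZ_p), \qquad \RK(S[1/p];\BZ_p)\to \mathrm{HP}(S\otimes\Q_p)\ \text{(suitably completed)},
\]
up to quasi-isogeny. Taking fibers of the two vertical maps gives a natural quasi-isogeny
\[
\mathrm{Fib}\bigl(\RK(S;\BZ_p)\to \RK(S/p;\BZ_p)\bigr)\;\to\;\mathrm{Fib}\bigl(\mathrm{TC}(S;\BZ_p)\to \mathrm{TC}(S/p;\BZ_p)\bigr).
\]
Here one uses rigidity (Clausen--Mathew--Morrow, \cite{henselian}) to pass between $S$ and $S/p$, since the map $\RK\to\mathrm{TC}$ is an equivalence on relative terms for henselian pairs after $p$-completion.

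The next step is to identify the left-hand fiber with $\RK^c(S,(p))$. By definition $\RK^c(S,(p))=\prolim_a \RK(S/p^a,(p))$, and continuity of $\RK$ with $\BZ_p$ coefficients for $p$-complete rings gives $\RK(S;\BZ_p)\simeq \prolim_a \RK(S/p^a;\BZ_p)$. It remains to compare integral and $p$-adic coefficients on the relative term. Each relative group $\RK_*(S/p^a,(p))$ for a nilpotent ideal is $p$-primary of bounded exponent (by the Goodwillie and Weibel results), so the pro-limit is already derived $p$-complete. Combining this with the Milnor sequence identifies $\RK^c(S,(p))$ with the $p$-completed relative fiber.

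On the right-hand side, the relative $\mathrm{TC}$ is compared with $\Sigma \HC(S,(p);\BZ_p)$. The paper \cite{NikolausBFSq} proves that, for $p$-torsion-free $p$-complete $S$, the relative $\mathrm{TC}$ term is related to $\Sigma\HC(-;\BZ_p)$ of the pair by a chain of natural maps whose fibers are controlled by the motivic filtration. Concretely, the graded pieces $\Z_p(i)$ and the Hodge-completed derived de Rham pieces agree up to bounded isogeny, and exactly so for weights $i\le p-2$. This follows from the comparison of syntomic cohomology with the filtered crystalline/de Rham complexes in the Fontaine--Messing range.

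Finally, I would check the truncation claim. A weight-$i$ piece contributes only in homotopical degrees near $2i-1$ and $2i$. The discrepancy in the comparison maps therefore first appears in weight $i=p-1$, i.e. in degrees $\ge 2p-4$. Hence each quasi-isogeny is an equivalence on $\tau_{\le 2p-5}$, and composing the zig-zag yields the stated isomorphism $\RK^c_n(S,(p))\cong \HC_{n-1}(S,(p);\BZ_p)$ for $n\le 2p-5$. Naturality in $S$ is automatic, since every map used is functorial.

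I expect the main obstacle to be this bookkeeping of degrees: pinning down that the error terms from weights $\ge p-1$ really vanish in degrees $\le 2p-5$ for the relative (rather than absolute) theories. This needs a careful look at the connectivity statements in \cite[\S 2.3, \S 6]{NikolausBFSq} for the filtered fibers.
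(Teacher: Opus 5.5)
Your overall route, assembling Dundas--Goodwillie--McCarthy, the rigidity and continuity results of \cite{henselian}, and the integral Beilinson comparison of \cite{NikolausBFSq}, is the paper's route. The paper's proof is just a citation of \cite[Remark 2.17, Theorem 2.20]{NikolausBFSq} (see also \cite[Corollary B]{NikolausBFSq}). The problem is the part you try to supply yourself, namely the $(2p-5)$ range, which rests on a false degree count.

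You claim that weight $i$ contributes only near homotopical degrees $2i-1$ and $2i$. That is roughly true for $S=\BZ_p$, but not for general $S$, and not for the rings the paper actually needs. On the cyclic side, the weight-$i$ piece of $\Sigma\HC$ is built from $L\Omega^{<i}[2i-1]$. For these rings the cohomology of $L\Omega^{<i}_S$ is spread over a range of degrees that grows with the dimension of $S$:
\begin{itemize}
\item Already for $R=\BZ$, the $p$-completed cotangent complex of $\wh\A=\BZ_p\lan t\ran[G]$ (with $x$ a generator of $G$) is $\wh\A\,dt\oplus(\wh\A/p^m)\,dx$. So $L\Omega^{<i}_{\wh\A}$ can have cohomology in degrees $0,1,2$, and for $p\ge 5$ weight $p-1$ already reaches homotopical degree $2p-5$.
\item For $R$ of larger dimension it reaches much lower. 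The syntomic pieces $\BZ_p(i)[2i]$ spread in the same way.
\end{itemize}
So even granting exactness in weights $i\le p-2$, your argument does not keep the error out of degrees $\le 2p-5$. For general $S$ it would give an equivalence only up to roughly degree $p$.

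The uniform bound holds because the defects of the quasi-isogenies in \cite{NikolausBFSq} are universal, i.e.\ independent of $S$. The paper's remark ties the constants $M^\pm_i(p,n)$ to $p$-torsion in the stable homotopy groups of spheres, which first appears in degree $2p-3$. For instance, for $S=\BZ_p$ and $p$ odd, $\RK^c_{2p-3}(\BZ_p,(p))$ contains the $\BZ/p$ coming from the image of $J$, whereas $\HC_{2p-4}(\BZ_p,(p);\BZ_p)\cong\BZ_p$ is torsion-free. This $S$-independent control is exactly what \cite[Remark 2.17, Theorem 2.20]{NikolausBFSq} provide. The paper imports it directly rather than re-deriving it by weight bookkeeping, and you should do the same.

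Separately, your first step is garbled. The square you write, with $\RK(S[1/p];\BZ_p)$ and $\mathrm{HP}$ in the bottom row, is not the Beilinson fiber square. Taking fibers of its vertical maps would not produce relative terms for $S\to S/p$. What you actually need there is
\[
\mathrm{Fib}(\RK(S;\BZ_p)\to\RK(S/p;\BZ_p))\simeq\mathrm{Fib}(\mathrm{TC}(S;\BZ_p)\to\mathrm{TC}(S/p;\BZ_p)),
\]
and this is an honest equivalence from Dundas--Goodwillie--McCarthy together with the rigidity theorem of \cite{henselian}, not a quasi-isogeny.
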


\begin{proof}
This follows from    \cite[Remark 2.17]{NikolausBFSq} and \cite[Theorem 2.20]{NikolausBFSq} and its proof, see also \cite[Corollary B]{NikolausBFSq}. (As explained in \emph{loc. cit.}, this also uses the cyclotomic trace theorems of Dundas--Goodwillie--McCarthy and results of Clausen--Mathew--Morrow \cite{henselian} on continuity of topological cyclic homology and $\RK$-theory.)  
\end{proof}

\begin{Remark}
{\rm The isomorphism is an integral refinement of a similar $\Q_p$-rational isomorphism in \cite{Be14}.}
\end{Remark}

\begin{Remark}\label{rem:smallprimes}
{\rm The effect of this finite zig-zag of quasi-isogenies on homotopy groups will be described in Proposition \ref{prop:smallprimes}. The behavior of the quasi-isogenies at homotopy groups of degree $n>2p-5$  is more complicated and to study them one has to examine carefully the arguments of
\cite{NikolausBFSq}. The main point, as stated in Remark 2.25 in \emph{loc. cit.}, is that the ``denominators" of the quasi-isogenies in degree $\leq n$ are bounded by a number which is independent of $S$ and only depends on $n$ and $p$, see also below. We thank A. Mathew for some very helpful email correspondence clarifying this point.  }
\end{Remark}

\subsubsection{A relative version}
We now apply this to $S=\wh \A$ and $S=\wh \M $ and use the naturality of the quasi-isogenies for the inclusion $f: \wh \A\to \wh \M $. In fact, it is more convenient to apply Theorem \ref{masterprop}  for this relative situation as follows.

Let $\RK^c(f, (p))=\RK^c(\wh \A, \wh \M, (p))$ be  the mapping fiber of the map of relative continuous $\RK$-theory spectra $\RK^c(\wh \A, (p))\to \RK^c(\wh \M, (p))$ induced by $f$, and similarly for 
 $ \HC(f, (p);\BZ_p)$. The fiber $\HC(f, (p);\BZ_p)$ will be described more explicitly in the next paragraph. The natural zig-zag of quasi-isogenies obtained by Theorem \ref{masterprop} also induces a  zig-zag of quasi-isogenies 
between the fibers $\RK^c(f, (p))$ and $\Sigma\HC(f, (p);\BZ_p)$. These are now 
equivalences after $(2p-6)$-truncation. 
This zig-zag gives   isomorphisms
\begin{equation}\label{relativeBeil}
 \RK^c_n(f, (p))\xrightarrow{ \sim\ }{\rm HC}_{n-1}(f, (p);\BZ_p)
\end{equation}
for all $n\leq 2p-6$. 
Hence, we have isomorphisms
\begin{equation}\label{isopinHC}
  \pi_n(\RF(-, (p)))\xrightarrow{ \sim\ } {\rm HC}_{n-1}(f, (p);\BZ_p)
\end{equation}
for all $n\leq 2p-6$, i.e. for $p\geq 3+n/2$.

To explain the situation at all primes, including small primes, we fix $p$ and $n$ and consider $\RK^c_n(f, (p))$ and  $\HC_{n-1}(f , (p);\BZ_p)$ as functors from 
$p$-adically complete torsion free $\BZ_p$-algebras $\wh R$ to abelian groups. Indeed, this makes sense since $f: \wh \A\to \wh \M$ is the base change of $\BZ[G]\to M_G$
to $\wh R\lan t\ran$.

Using Remark \ref{rem:smallprimes} and the above we see that, in general, the following holds, as a corollary of Theorem \ref{masterprop} and its proof.

\begin{proposition}\label{prop:smallprimes} There is an integer $N$, and a collection of 
functors $\calF_i$, for $i=0,\ldots, N$, and $\calG_i$, for $i=0,\ldots, N-1$, from 
 $p$-adically complete torsion free $\BZ_p$-algebras $\wh R$ to abelian groups, together  with natural transformations 
 \[
 \calF_i\leftarrow \calG_i\to \calF_{i+1},
 \]
  for $i=0,\ldots , N-1$, such that 
 \begin{itemize}
     \item $\calF_0(\wh R )=\RK^c_n(f, (p))$ and $\calF_{N}(\wh R )=\HC_{n-1}(f , (p);\BZ_p)$, as functors,
     \item there are integers $M^-_i(p, n)$, resp. $M^+_i(p, n)$, which are independent of $G$, such that for all $i$, and all $\wh R$, both the cokernel and kernel of
     \[ 
      \calF_i(\wh R)\leftarrow \calG_i(\wh R), \ \ \hbox{\rm resp. }\ \ \calG_i(\wh R)\to \calF_{i+1}(\wh R),
     \] 
     are annihilated by $p^{M^-_i(p, n)}$, resp. $p^{M^+_i(p, n)}$.
 \end{itemize}
 If $n\leq 2p-6$, then we can take $N=1$, $\calG_0=\calF_0$, and $M^-_0(p, n)=M^+_0(p, n)=0$. \qed
 \end{proposition}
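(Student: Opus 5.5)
The plan is to obtain Proposition \ref{prop:smallprimes} by unwinding Theorem \ref{masterprop} and its proof in \cite{NikolausBFSq}. We would first pass from absolute to relative statements, as in the discussion leading to \eqref{relativeBeil}. For fixed $p$, consider the functor $\wh R\mapsto f_{\wh R}\colon \wh\A\to\wh\M$, where $\wh\A$ and $\wh\M$ are the $p$-adic completions of $\BZ[G]\otimes\wh R[t]$ and $M_G\otimes\wh R[t]$. Both source and target are $p$-adically complete and $p$-torsion free rings to which Theorem \ref{masterprop} applies. However, $\wh\A$ is not commutative, so we would use the version of \cite[Theorem 2.20]{NikolausBFSq} valid for (associative) $p$-complete, $p$-torsion free rings. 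The construction there produces a finite zig-zag of natural quasi-isogenies of spectra
\[
\RK^c(S,(p))=X_0\leftarrow Y_0\to X_1\leftarrow\cdots\to X_N=\Sigma\HC(S,(p);\BZ_p).
\]
Its terms are built from $\RK^c$, $\mathrm{TC}$, $\mathrm{TC}^-$, $\mathrm{HC}^-$ and their relative fibres, together with the motivic/Nygaard filtration comparisons of \cite{NikolausBFSq}.

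Next, we would apply the exact functor "fibre along $f$" to every spectrum in the zig-zag. Naturality makes each map a map of fibre sequences. Taking $\pi_n$ of the $i$-th spectra (shifted by one on the $\HC$ side) then defines the functors $\calF_i$ and $\calG_i$, with $\calF_0=\RK^c_n(f,(p))$ and $\calF_N=\HC_{n-1}(f,(p);\BZ_p)$. By construction these are natural transformations of functors of $\wh R$.

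The key input is the definition of a quasi-isogeny in \cite[\S 2.3]{NikolausBFSq}, together with \cite[Remark 2.25]{NikolausBFSq}. Each map $\phi$ in the zig-zag has fibre whose homotopy groups in degrees $\le n+1$ are annihilated by a power $p^{c(p,n)}$ that depends only on $p$ and $n$, not on $S$. Essentially, this bound comes from the denominators $j!$ (or the comparison constants between $\mathrm{HC}^-$ and $\mathrm{TC}$ on graded pieces of weight $\le n$). Passing to the fibre along $f$, a short five-lemma-type diagram chase in the map of long exact sequences shows the following. If the kernel and cokernel of $\phi$ on $\pi_k$ for the source and target of $f$ are killed by $p^c$ for $k\in\{n,n+1\}$, then the kernel and cokernel on $\pi_n$ of the relative fibres are killed by $p^{2c}$, or by a similar fixed multiple of $c$. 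This yields the constants $M_i^{\pm}(p,n)$. Since the only dependence on $G$ is through $f$, and the constants come from the absolute statements uniformly in $S$, they are independent of $G$ and of $\wh R$.

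For $n\le 2p-6$, Theorem \ref{masterprop} gives an equivalence on $(2p-5)$-truncations for source and target. On fibres, this gives an isomorphism on $\pi_n$ for $n\le 2p-6$, which is \eqref{relativeBeil}. So one may collapse the zig-zag to a single natural isomorphism, taking $N=1$, $\calG_0=\calF_0$ and $M_0^\pm=0$.

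The main obstacle is extracting the uniform bound from \cite{NikolausBFSq}: verifying that every step of their zig-zag is natural, that it applies to the noncommutative $\wh\A$, and that its degree-$\le n+1$ isogeny constants depend only on $(p,n)$. I would rely on Remark \ref{rem:smallprimes} and \cite[Remark 2.25]{NikolausBFSq} for this rather than recompute the constants.
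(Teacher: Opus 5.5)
Your proposal follows the paper's route: apply the natural zig-zag of quasi-isogenies from \cite{NikolausBFSq} to $\wh\A$ and $\wh\M$, and pass to fibres along $f$, so that equivalences on $\tau_{\le 2p-5}$ become isomorphisms on $\pi_n$ for $n\le 2p-6$; the uniform constants come from \cite[Remark 2.25]{NikolausBFSq}, as in Remark \ref{rem:smallprimes}, and your mod-$p^c$ five-lemma chase only spells out what the paper leaves implicit. One correction: in this section $G$ is a cyclic $p$-group, so $\wh\A$ (like $\wh\M$) is commutative and Theorem \ref{masterprop} applies as stated---you need no associative version of \cite{NikolausBFSq}, and that part of your ``main obstacle'' disappears.
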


 \begin{Remark}
{\rm  Precise values for $M^\pm_i(p, n)$ seem difficult to pin down. The $M^\pm_i(p, n)$ are related, among other things, to the $p$-power torsion in the stable homotopy groups of spheres. On the other hand, one can expect that an upper bound for $M^\pm_i(p, n)$ can be obtained from an effective comparison of syntomic cohomology with derived de Rham cohomology at the integral level. Such a comparison result has recently been announced by J. Lurie and we thank A. Mathew for bringing this to our attention.}
\end{Remark}

\subsection{$p$-adic relative   cyclic homology}

 We first review some facts about ($p$-adic, relative) cyclic homology and discuss complexes which are used for its calculation. We then provide an annihilator for the relative group
$\HC_{n-1}(f, (p);\Z_p)$.

\subsubsection{Cyclic complexes} 

Suppose $S$ is a (commutative) $\BZ_p$-algebra which is $p$-adically complete and $p$-torsion free, i.e. flat over $\BZ_p$.
 (For our application, we only need to consider $S=\wh \A$ or $S=\wh \M $.)

Note that, for a cyclic object $Z$ in a general abelian category, one can define the cyclic homology chain complex $C_*(Z)$, see \cite[\S I]{GoodTop}. The homology of $C_*(Z)$ is 
the cyclic homology of $Z$. A basic example of a cyclic object in the category of $\BZ$-modules is given by 
\[
Z_n(S)=S^{\otimes (n+1)}=S\otimes_{\BZ} S\otimes_{\BZ}\cdots \otimes_{\BZ}S,
\]
and the standard face, degeneracy and cyclic action maps (\cite{LQ}, \cite{GoodTop}).
Then $\HC(S)=C_*(Z(S))$ is the  complex calculating the 
cyclic homology of $S$. (So, ${\rm HC}(S)$ is also the total complex ${\rm Tot}{\mathscr C}(S)$ associated to the double complex ${\mathscr C}(S)$   in \cite{LQ}; here, we mostly follow the notation of \cite{GoodTop}.) 

Next, we consider the derived $p$-adic completions 
$\wh Z(S)$ of $Z(S)$ and $\wh{\rm HC}(S)$ of ${\rm HC}(S)$:
\[
\wh Z(S):={\rm homlim}_m (Z(S)\otimes^{\BL}\Z/p^m), \quad \wh{\rm HC}(S):={\rm homlim}_m ({\rm HC}(S)\otimes^{\BL}\Z/p^m).
\]
 Since $S$ is $\BZ$-flat,  $Z(S)\otimes^{\BL}\Z/p^m$ has terms
 \[
\otimes^{n+1}_{\BZ/p^m}S/p^m= (S/p^m\otimes_{\BZ/p^m}   \cdots \otimes_{\BZ/p^m}S/p^m)=Z_n(S)\otimes_{\Z_p}\Z/p^m.
 \]
 Since the connecting maps $\otimes^{n+1}_{\BZ/p^{m+1}}S/p^{m+1}\to \otimes^{n+1}_{\BZ/p^{m}}S/p^{m}$ are surjective, 
 the derived limit $\wh Z(S)$ of $Z(S)\otimes^{\BL}\Z/p^m$ coincides with the naive limit which has terms
 \[
\wh Z_n(S)=\varprojlim_m(Z_n(S)\otimes_{\Z_p}\Z/p^m)=\varprojlim_m(S/p^m \otimes_{\BZ/p^m}\cdots \otimes_{\BZ/p^m}S/p^m)=
\]
\[
=S\hat\otimes_{\BZ_p}\cdots \hat\otimes_{\BZ_p}S=S^{\wh\otimes (n+1)}.
\]
(Here $\wh\otimes$ denotes the $p$-adically completed tensor product. Recall  that $S$ is   $p$-adically complete.) The $p$-adic completion $\wh Z(S)$ is a cyclic object in the category of $\BZ_p$-modules. The corresponding cyclic homology chain complex $C_*(\wh Z(S))$ is   also the $p$-adic completion $\wh{{\rm {HC}}}(S)$ of $\HC(S)$ and represents $\HC(S; \BZ_p)$. In particular, we have
\[
\HC_*(S; \BZ_p):={\rm H}_*(\wh{{\rm {HC}}}(S)).
\] 

 To give a complex representing the relative cyclic homology $\HC(S, (p);\BZ_p)$ we consider the DGA
 $\calS:=S_0\oplus S_1$
given by $S_0=S$, $S_1=S\cdot x$, with multiplication satisfying $x^2=0$ and differential $\partial: S_1\to S_0$ given by $\partial(x)=p$.  This DGA is a derived replacement for $S/p$.  Then we first have the cyclic object
 $\wh Z(S/p)$ in the category of complexes of $\BZ_p$-modules with terms the complexes
 \[
 \wh Z_n(S/p)=\wh Z_n(\calS, \partial)=(S_0\xleftarrow{\partial} S_1)^{\hat\otimes (n+1)}.
 \] 
These are the $p$-adic completions of
 \[
 Z_n(S/p)= Z_n(\calS, \partial)=(S_0\xleftarrow{\partial} S_1)^{ \otimes (n+1)}.
 \]
 Then, the cyclic object $\wh Z( S, (p))$ is the mapping fiber of the natural map
\[
\wh Z(S)\to \wh Z(S/p)=\wh Z(\calS, \partial),
\]
 and following \cite{GoodTop}, we can consider the complex $C_*(\wh Z(\calS, \partial))$ in the category of complexes of $\BZ_p$-modules.
 The relative cyclic homology  $\HC(S, (p);\BZ_p)$ is now 
represented by the total complex ${\rm Tot}\, C_*(\wh Z( S, (p)))$.

\subsubsection{Relative cyclic homology} 

We now continue with our usual assumptions and notations.
In particular, $G$ is a cyclic group of order $p^m$, $R$ is as in Theorem \ref{thm:MainIntro}, and $f$ is the inclusion $\wh \A\to \wh \M$. 
We will apply  the above to $S=\wh \A$ and $S=\wh\M$ and show the following.

 \begin{proposition}\label{lemma:HCconductor}
With the above notations, for all $n\geq 0$, we have
\[
p^{(n+1)m}\cdot \HC_{n-1}(f, (p);\Z_p)=(0).
\]
\end{proposition}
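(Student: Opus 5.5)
The plan is to use the conductor relation of Lemma \ref{lemma:conductor}, $p^m\wh\M\subset\wh\A\subset\wh\M$, at the level of the explicit complexes. The relative group $\HC_{n-1}(f,(p);\Z_p)$ is the homology in degree $n-1$ of the fiber of the map of total complexes
\[
{\rm Tot}\, C_*(\wh Z(\wh\A,(p)))\to {\rm Tot}\, C_*(\wh Z(\wh\M,(p))).
\]
In the cyclic object of tensor powers, the term of simplicial degree $k$ is built from $(k+1)$-fold completed tensor powers of $S$ and of the DGA $\calS$. So it is enough to show two things. First, on the part of the complex of simplicial degree $k$, the map induced by $f$ has a ``quasi-inverse up to $p^{(k+1)m}$''. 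Second, the total degree $n-1$ only involves simplicial degrees $k\le n$; the extra degree comes from the $x$-part of $\calS$ and the mapping fiber. Together these give the bound $p^{(n+1)m}$.

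\textbf{Step 1: the degreewise map.} Let $\phi\colon \wh\M\to\wh\A$ be multiplication by $p^m$. This is $\wh\A$-linear, lands in $\wh\A$ by Lemma \ref{lemma:conductor}, and both $f\circ\phi$ and $\phi\circ f$ equal multiplication by $p^m$. Consider the map $\phi^{\otimes(k+1)}\colon \wh\M^{\wh\otimes(k+1)}\to\wh\A^{\wh\otimes(k+1)}$, and its analogue for $(\calS,\partial)$, which is compatible with $\partial$ since $\phi$ is linear. Both composites with $f^{\otimes(k+1)}$ are multiplication by $p^{(k+1)m}$. The problem is that $\phi^{\otimes(k+1)}$ does not commute with the face maps, because these use multiplication and $\phi(ab)\ne\phi(a)\phi(b)$. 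So I will not try to build a map of cyclic objects.

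\textbf{Step 2: filtration argument.} I will argue with a filtration instead. Filter the fiber complex by simplicial degree; the cyclic bicomplex has only finitely many columns contributing in total degree $\le n$. The graded pieces are the fibers of $f^{\otimes(k+1)}$ on the Hochschild-type terms (after the cyclic $B$/periodicity bookkeeping). By Step 1, each fiber of $f^{\otimes(k+1)}$ on a single term is annihilated by $p^{(k+1)m}$ on homology in each degree. Indeed, for an injective map $u$ with a map $v$ satisfying $uv=vu=N$, the cokernel is $N$-torsion, and the fiber's homology is the cokernel shifted.

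A naive spectral sequence estimate would then multiply these bounds over columns, giving something much worse than $p^{(n+1)m}$. To avoid this, I will use a sharper observation. For a fixed tensor-degree term, the relevant fiber is the quotient $\wh\M^{\wh\otimes(k+1)}/\wh\A^{\wh\otimes(k+1)}$ (and its $\calS$-analogue). I will check that this quotient is killed by $p^{(k+1)m}$ as a module, not just on homology. Then the whole fiber complex in simplicial degrees $\le n$ is a complex of modules killed by $p^{(n+1)m}$, since $k\le n$. Its homology is therefore killed by $p^{(n+1)m}$.

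\textbf{Step 3: relating the fiber to the quotient complex.} For this I need the map of complexes to be termwise injective. Then the fiber is quasi-isomorphic to the quotient complex, and the quotient terms are literally $p^{(k+1)m}$-torsion. Injectivity after completed tensor products holds because both rings are $p$-torsion free and finite free over $\wh R$ in the $G$-direction. Both $\BZ_p[G]$ and $M_G\otimes\BZ_p$ are free $\BZ_p$-modules of rank $p^m$, so tensor powers over $\wh R\langle t\rangle$-flat data remain injective.

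\textbf{Step 4: degree bookkeeping (the main obstacle).} I expect the main difficulty to be confirming that homology in degree $n-1$ of the relative-to-$(p)$ total complex only sees tensor degree $\le n$. The $x$-generators raise internal degree, and the mapping fiber with respect to $S\to\calS$ shifts by one. This must be handled carefully so that the count of tensor factors is $n+1$ and not $n+2$. My first attempt will be to use the normalized cyclic complex and count explicitly: the total degree $n-1$, with one fiber shift, yields simplicial degree at most $n$, hence at most $n+1$ tensor factors.
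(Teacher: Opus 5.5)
Once you set aside the quasi-inverse idea of Step 1, your argument is the paper's proof: termwise injectivity of $f^{\hat\otimes k}$ (Lemma \ref{lemma:incl}, proved via $\BZ_p$-flatness of the completed tensor powers of $\wh R\lan t\ran$) makes the relative complex the shifted quotient complex, whose terms are sums of $(\wh\M)^{\hat\otimes k}/f^{\hat\otimes k}((\wh\A)^{\hat\otimes k})$, each killed by $p^{km}$, with only $k\le n+1$ contributing. For Step 4, note that $\wh Z(S)\to \wh Z(\calS,\partial)$ is itself termwise injective with cokernel the part of internal degree $\geq 1$, so the shift from the $(p)$-fiber is absorbed by the degree of $x$ and only the shift from the $f$-fiber remains, giving simplicial degree $\le n$, i.e.\ at most $n+1$ tensor factors.
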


\begin{proof} We use the constructions of the previous subsection.
The inclusion $f: \wh \A\to \wh \M$
 induces a map of cyclic objects in the category of complexes of $\BZ_p$-modules
 \[
 \wh Z(\wh \A, (p))\to  \wh Z(\wh \M, (p))
 \]
 (i.e. respecting all the structures) and we let 
 \[
\wh Z(f, (p)):={\rm Fib}(\wh Z(\wh \A, (p))\to  \wh Z(\wh \M, (p)))
\]
be the mapping fiber of this map.

 Note that the terms of the complexes $\wh Z(\wh \A, (p))$ and $\wh Z(\wh \M, (p))$ are abstractly isomorphic, as $\BZ_p$-modules, to direct sums of copies of the   completed tensor products $(\wh \A)^{\hat\otimes k}$ and $(\wh \M)^{\hat\otimes k}$, respectively.  Under these decompositions, the induced map
 $\wh Z(\wh \A, (p))\to  \wh Z(\wh \M, (p))$ is a direct sum of the maps $f^{\hat \otimes k}$.
 Hence, Lemma \ref{lemma:incl} below implies that 
the map $\wh Z(\wh \A, (p))\to  \wh Z(\wh \M, (p))$  is injective on all terms of the complexes. Assuming Lemma \ref{lemma:incl} for the moment, we now complete the proof of the proposition.
The injectivity implies that
\[
{\rm Fib}(\wh Z(\wh \A, (p))\to  \wh Z(\wh \M, (p)))= \left(\wh Z(\wh \M, (p))/f(\wh Z(\wh \A, (p)))\right)[1].
\]
Hence, all the terms of  $\wh Z(f, (p)) $ are direct sums of $\BZ_p$-modules of the form 
$ (\wh \M)^{\hat\otimes k}/f^{\hat\otimes k}( (\wh \A)^{\hat\otimes k})$. Since $p^mM_G\subset \BZ_p[G]$ by Lemma \ref{lemma:conductor}, we have
$p^m \wh \M\subset \wh \A$ and so
\begin{equation}\label{pmkills}
p^{km}\cdot (\wh \M)^{\hat\otimes k}/f^{\hat\otimes k}( (\wh \A)^{\hat\otimes k})=(0).
\end{equation}
The mapping fiber $ \wh Z(f, (p))$  is again a cyclic object in the category of complexes of $\BZ_p$-modules and we have an exact sequence
\[
0\to {\rm Tot}\, C_*(\wh Z(\wh \A, (p)))\to  {\rm Tot}\, C_*(\wh Z(\wh \M, (p)))\to {\rm Tot}\, C_*(\wh Z(f, (p)))[-1]\to 0
\]
 (\cite[Prop. 2.7]{GoodTop}). Hence, the relative cyclic homology $\HC(f, (p);\BZ_p)$ is represented by ${\rm Tot}\, C_*(\wh Z(f, (p)))$. By the above,  
$\HC_{n-1}(f, (p);\BZ_p)$ is a subquotient of a direct sum of copies of $(\wh \M)^{\hat\otimes k}/f^{\hat\otimes k}( (\wh \A)^{\hat\otimes k})$,
for $1\leq k\leq n+1$. But by \eqref{pmkills}, this is annihilated by $p^{(n+1)m}$.
\end{proof}
 
  \begin{lemma}\label{lemma:incl}
 For each $k\geq 1$, the homomorphism
 \[
 f^{\hat \otimes k}: (\wh \A)^{\hat\otimes k}\to (\wh \M)^{\hat\otimes k}
 \]
 induced by $f$ is injective.
 \end{lemma}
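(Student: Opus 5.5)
The plan is to show that $f$ is the $p$-adic completion of an injective map of finite free (or at least flat) modules with torsion-free cokernel after inverting $p$, and then pass to completed tensor powers. Recall $\wh\A=\wh R\lan t\ran[G]=\BZ_p[G]\otimes_{\BZ_p}\wh R\lan t\ran$ and $\wh\M=(M_G\otimes_\BZ\BZ_p)\otimes_{\BZ_p}\wh R\lan t\ran$. Set $T=\wh R\lan t\ran$, a $p$-adically complete, $p$-torsion free $\BZ_p$-algebra. The map $f$ is $\iota\otimes{\rm id}_T$, where $\iota:\BZ_p[G]\to M_G\otimes\BZ_p$ is an injection of finite free $\BZ_p$-modules of the same rank $p^m$. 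Both sides are finite free over $T$, so no completion is needed at this level.

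First I will identify the completed tensor powers. For finite free $\BZ_p$-modules $P_1,\dots,P_k$,
\[
(P_1\otimes T)\hat\otimes_{\BZ_p}\cdots\hat\otimes_{\BZ_p}(P_k\otimes T)\cong (P_1\otimes_{\BZ_p}\cdots\otimes_{\BZ_p}P_k)\otimes_{\BZ_p} T^{\hat\otimes k}.
\]
This holds because completed tensor product commutes with finite direct sums. Hence $f^{\hat\otimes k}=\iota^{\otimes k}\otimes{\rm id}_{T^{\hat\otimes k}}$, where $\iota^{\otimes k}:\BZ_p[G]^{\otimes k}\to (M_G\otimes\BZ_p)^{\otimes k}$ is a map of finite free $\BZ_p$-modules. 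It is injective, since it becomes an isomorphism after inverting $p$: both sides become $\Q_p[G]^{\otimes k}$. Moreover, by Lemma \ref{lemma:conductor} its cokernel $Q_k$ is killed by $p^{km}$.

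Next, tensoring the exact sequence $0\to \BZ_p[G]^{\otimes k}\to (M_G\otimes\BZ_p)^{\otimes k}\to Q_k\to 0$ with $T^{\hat\otimes k}$ preserves injectivity provided ${\rm Tor}_1^{\BZ_p}(Q_k,T^{\hat\otimes k})=0$. Since $Q_k$ is a finite $p$-power torsion module, this Tor group is a finite sum of the $p^r$-torsion submodules of $T^{\hat\otimes k}$. It therefore suffices to know that $T^{\hat\otimes k}$ is $p$-torsion free.

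The main (mild) obstacle is precisely this torsion-freeness of the completed tensor power $T^{\hat\otimes k}=\varprojlim_a (T/p^a)^{\otimes_{\BZ/p^a} k}$. I would argue as follows. $T$ is $p$-torsion free, hence flat over $\BZ_p$, so $T^{\otimes k}$ is $\BZ_p$-flat. Consequently the transition maps and the sequences
\[
0\to T^{\otimes k}/p^{a}\xrightarrow{p}T^{\otimes k}/p^{a+1}\to T^{\otimes k}/p\to 0
\]
are exact. Passing to the limit, which is exact here because the systems are Mittag-Leffler with surjective transitions, shows that multiplication by $p$ on $\varprojlim_a T^{\otimes k}/p^a=T^{\hat\otimes k}$ is injective. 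Equivalently, the classical completion of a flat module agrees with its derived completion and is flat. Combining this with the previous step gives the injectivity of $f^{\hat\otimes k}$.
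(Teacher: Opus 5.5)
Your proposal is correct and follows essentially the same route as the paper: you write $f^{\hat\otimes k}=\iota^{\otimes k}\otimes\mathrm{id}$ with $\iota^{\otimes k}$ an injection of finite free modules, and reduce to the flatness (equivalently, over $\BZ_p$, the $p$-torsion-freeness) of $\wh R\lan t\ran^{\hat\otimes k}$. The only difference is in that last step: the paper cites a Stacks Project lemma on limits of flat $\BZ/p^a$-modules, while you check $p$-torsion-freeness directly using left exactness of $\varprojlim_a$ and the injectivity of $p\colon T^{\otimes k}/p^a\to T^{\otimes k}/p^{a+1}$, which is equally valid.
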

 
 \begin{proof} Observe that, since $\BZ[G]$ and $M_G$ are free finitely generated abelian groups, we have
 \[
  (\wh \A)^{\hat\otimes k}=(\BZ[G])^{\otimes k}\otimes_{\BZ} \wh R\lan t\ran^{\hat\otimes k}, \quad   (\wh \M)^{\hat\otimes k}=(M_G)^{\otimes k}\otimes_{\BZ} \wh R\lan t\ran^{\hat\otimes k},
 \]
where $\wh R\lan t\ran$ is the $p$-adic completion of $R[t]$. If $\iota: \BZ[G]\to M_G$ denotes the inclusion, then
\[
\iota^{\otimes k}: (\BZ[G])^{\otimes k}\xrightarrow{\ } (M_G)^{\otimes k}
\]
is injective and 
$ f^{\hat \otimes k}=\iota^{\otimes k}\otimes_{\BZ}\wh R\lan t\ran^{\hat\otimes k}$.
Hence, it is enough to show that $\wh R\lan t\ran^{\hat\otimes k}=\wh R\lan t\ran\hat\otimes_{\BZ_p} \cdots \hat\otimes_{\BZ_p}\wh R\lan t\ran$ is  flat over $\BZ_p$ and hence over $\BZ$. But
\[
\wh R\lan t\ran^{\hat\otimes k}=\varprojlim_a  R[ t]^{\otimes k}/p^a=\varprojlim_a \left((R[t]/p^a)\otimes_{\BZ/p^a}\cdots\otimes_{\BZ/p^a} (R[t]/p^a)\right),
\]
 and $(R[t]/p^a)^{\otimes k}$ is flat over $\BZ/p^a$, since $R[t]/p^a$ is $\BZ/p^a$-flat. The flatness of $\wh R\lan t\ran^{\hat\otimes k}$ over $\BZ_p$ now follows by
\cite[Lemma 15.27.4]{StacksProj}.
 \end{proof}

 \begin{corollary}\label{cd2} Assume $G$ is a cyclic group of order $p^m$ and $R$ satisfies the assumptions of Theorem \ref{thm:MainIntro}. 
 There exists an integer $c(p, n)$, which is independent of $R$ and $G$, such that 
\[
p^{(n+1)m+c(p, n)}\cdot \pi_n(\RF(-, (p))) =(0).
\]
If $p\geq 3+n/2$,  then $c(p, n)=0$. \end{corollary}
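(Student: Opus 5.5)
The plan is to combine Proposition \ref{lemma:HCconductor} with the comparison between relative continuous $\RK$-theory and relative cyclic homology (Theorem \ref{masterprop} and Proposition \ref{prop:smallprimes}). The first step is to identify $\RF(-,(p))$ with the relative continuous theory $\RK^c(f,(p))$. By diagram \eqref{CDfib1}, $\RF(-,(p))$ is the fiber of $\RF\to\RF(/p)$. Since fibers commute with fibers, it is also the fiber of $\RK^c(\wh\A,(p))\to\RK^c(\wh\M,(p))$. This is by definition $\RK^c(f,(p))$, so $\pi_n(\RF(-,(p)))=\RK^c_n(f,(p))=\calF_0(\wh R)$.

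\textbf{Case $p\geq 3+n/2$.} Here $n\leq 2p-6$, so the isomorphism \eqref{isopinHC} gives $\pi_n(\RF(-,(p)))\simeq \HC_{n-1}(f,(p);\BZ_p)$. Proposition \ref{lemma:HCconductor} then shows this group is killed by $p^{(n+1)m}$, so we may take $c(p,n)=0$.

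\textbf{General case.} Use the zig-zag $\calF_0\leftarrow\calG_0\to\calF_1\leftarrow\cdots\to\calF_N$ of Proposition \ref{prop:smallprimes}. The key elementary observation is as follows. Suppose $\phi:X\to Y$ has kernel and cokernel killed by $p^a$. If $p^b$ kills $Y$, then $p^{a+b}$ kills $X$: for $x\in X$, $\phi(p^b x)=0$, so $p^bx\in\ker\phi$. Likewise, if $p^b$ kills $X$, then $p^{a+b}$ kills $Y$: for $y\in Y$, $p^a y\in\operatorname{im}\phi$, so $p^{a+b}y=0$.

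Start from $\calF_N(\wh R)=\HC_{n-1}(f,(p);\BZ_p)$, which is killed by $p^{(n+1)m}$. Propagate the bound backwards along the zig-zag, using only the kernel half of the observation for the maps $\calG_i\to\calF_{i+1}$ and the cokernel half for the maps $\calG_i\to\calF_i$. Each step adds at most $M^\pm_i(p,n)$ to the exponent. Hence $\calF_0(\wh R)$ is killed by $p^{(n+1)m+c(p,n)}$ with
\[
c(p,n)=\sum_{i=0}^{N-1}\bigl(M^-_i(p,n)+M^+_i(p,n)\bigr).
\]

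\textbf{Independence of $R$ and $G$.} The integers $M^\pm_i(p,n)$ are independent of $G$ and uniform in $\wh R$ by Proposition \ref{prop:smallprimes}. This is where the main difficulty lies: we are relying on the uniformity statement of Remark \ref{rem:smallprimes}, namely that the denominators in \cite{NikolausBFSq} depend only on $(p,n)$. Some care is also needed because the functors $\calF_i,\calG_i$ are relative (fiber) constructions for $f$, the base change of $\BZ[G]\to M_G$. For that we use that the zig-zag is natural, so that $N$ and the $M^\pm_i$ do not depend on $m$. With this granted, the rest is formal, and the $G$-dependence enters only through the $p^{(n+1)m}$ from Proposition \ref{lemma:HCconductor}.
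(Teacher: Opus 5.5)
Your proposal is correct and is the paper's argument: the paper likewise sets $c(p,n)=\sum_{i=0}^{N-1}\bigl(M^+_i(p,n)+M^-_i(p,n)\bigr)$ and deduces the bound from Proposition \ref{prop:smallprimes} combined with Proposition \ref{lemma:HCconductor}. You additionally spell out what the paper leaves implicit: the identification $\pi_n(\RF(-,(p)))=\RK^c_n(f,(p))$, the kernel/cokernel propagation of the exponent along the zig-zag, and the need for $N$ itself to be independent of $G$.
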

\begin{proof}
Set $c(p,n)=\sum_{i=0}^{N-1}(M^+_i(p, n)+M^-_i(p,n))$. Then this follows from Proposition \ref{prop:smallprimes} and Proposition \ref{lemma:HCconductor}.
\end{proof}

\subsection{Proof of Theorem \ref{thmexpo}}\label{ss:Finale} We can finally assemble all the ingredients following the outline in \S \ref{ss:strategy}. By combining Corollary \ref{cd2}, Corollary 
\ref{cor:truncated2} and the exact sequence \eqref{pinFexact}, we see that $\pi_n(\RNF)\simeq \RNK_n(R[G])$ is annihilated by
\[
p^{(n+3)m+4+a(p,n)},
\]
where we set
\[
a(p, n):=b(p, n+1)+b(p,n)+c(p,n).
\]
 Theorem \ref{thmexpo} now follows. \qed

\subsection{The case $n=1$}\label{ss:n1case} Under some additional conditions on $R$, we can give somewhat better bounds for the exponent of $\RNK_1(R[G])$.
To keep the presentation simple, we just outline the argument when $R=\BZ$   and omit the details. Assume $G=C_{p^m}$ is cyclic of order $p^m$. 
Then, using \cite[Theorem 1.2, Theorem 1.3]{CPTII}, we can obtain 
\[
{\bf R}^1\prolim_a \RK_2(\A/p^a)=(0),\quad  \prolim_a \RK_1(\A/p^a)\simeq \RK_1(\wh \A).
\]
This, combined with the Milnor exact sequence, gives $\RK^c_1(\wh \A)\simeq \RK_1(\wh \A)$. Similarly we obtain $\RK^c_1(\wh \M)\simeq \RK_1(\wh \M)$. By 
\cite[Lemma 3.8]{CPTII} we have $\rSK_1(\wh \A)=(0)$ and now this quickly gives $\ker(\RK^c_1(\wh \A)\to \RK^c_1(\wh \M))=(0)$.
 We can now use \eqref{MV2} and the discussion just below it, to see that it is enough to find an exponent for the cokernel of $\RK^c_2(\wh \A)\to \RK^c_2(\wh \M)$. We then follow the same strategy, using localization at $(p)$, as in \S \ref{ss:strategy}. In this case, because of the above,   there is no need to bound $\RK_1(\A/p)$ and we can use Remark \ref{rem:smallerexpon} to bound $\RK_2(\M/p)$. This way we can ``shave off" $m+3$ from the exponent. Using the same bound as before for the relative to $(p)$ part, we can now conclude that
 \[
 p^{3m+1+a(p,1)}\cdot \RNK_1(\BZ[C_{p^m}])=(0).
 \]
This leads to the result that
\[
e(G)^3|G|\prod_{p||G|}  p^{a(p,1)} \cdot \RNK_1(\BZ[G])=(0),
\]
 for an arbitrary finite group $G$ which improves on Theorem \ref{thm:general} in this case.

 \section{Boundedness of Farrell Nil groups}\label{s:AppendixB}

 \subsection{Farrell Nil groups} For a unital associative ring $A$ with an automorphism $\alpha: A\to A$, we denote by $A_\alpha[t]$ 
  the skew polynomial ring with coefficients in $A$, variable $t$, and multiplication satisfying $t\cdot a=\alpha(a)t$, for all $a\in A$. 
  By definition, the Farrell Nil groups of $(A,\alpha)$  are
  \[
\RNK_n(A, \alpha):=   \ker(\RK_n(A_\alpha[t])\xrightarrow{\epsilon} \RK_n(A)),
  \]
where $\epsilon$ is obtained by the augmentation $A_\alpha[t]\to A, t\mapsto 0$. We will use:
\begin{theorem}\label{skewhomotopy}
If $A$ is left regular Noetherian then so is $A_\alpha[t]$ and we have $\RNK_n(A, \alpha)=(0)$.
\end{theorem}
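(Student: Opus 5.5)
The statement has two parts: regularity of $A_\alpha[t]$, and vanishing of $\RNK_n(A,\alpha)$. I would handle them in that order, the second resting on Quillen's resolution theorem for graded rings.

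\emph{Noetherianity.} I would invoke the skew Hilbert basis theorem. Since $\alpha$ is an automorphism, every element of $A_\alpha[t]$ can be written with coefficients on the left or on the right. The leading coefficients of the elements of degree $d$ in a left ideal $L\subset A_\alpha[t]$ form a left ideal $L_d$ of $A$ (twisted by $\alpha^{d}$, which preserves the left ideal structure because $\alpha$ is bijective). These form an ascending chain $L_0\subset \alpha^{-1}(L_1)\subset\cdots$ after suitable twisting, so they stabilize. The usual argument then shows $L$ is finitely generated.

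\emph{Regularity.} For a finitely generated left $A_\alpha[t]$-module $M$, I would use the standard exact sequence
\[
0\to A_\alpha[t]\otimes_A M\to A_\alpha[t]\otimes_A M\to M\to 0,
\]
where the first map is $s\otimes m\mapsto st\otimes m - s\otimes tm$, adjusted by $\alpha$ so that it is well defined. The module $A_\alpha[t]$ is free as a right $A$-module, so base change along $A\to A_\alpha[t]$ is exact and carries projectives to projectives. Given a finite projective $A$-resolution of $M$, this produces a finite projective resolution of $M$ over $A_\alpha[t]$, of length at most one more. I would first reduce to $M$ finitely generated over $A$ by a filtration argument, or simply quote the standard result (e.g.\ McConnell--Robson, Thm.\ 7.5.3) for the global-dimension bound $\le \mathrm{gldim}(A)+1$ in the finite-dimensional case. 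For regularity in the ``every f.g.\ module has finite projective dimension'' sense, the displayed sequence handles any $M$, provided the $A$-module underlying $M$ has finite projective dimension. That is the delicate point, since $M$ need not be finitely generated over $A$. I would circumvent it by appealing directly to the graded version of Quillen's argument below.

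\emph{Vanishing of $\RNK_n$.} This is the main step. $A_\alpha[t]$ is an $\mathbb{N}$-graded ring with degree-zero part $A$, and it is flat (indeed free) as a right $A$-module. Quillen's theorem for graded rings (\cite{Rosenberg}, or Quillen's ``Higher algebraic K-theory I'', \S6, Thm.\ 7) applies to $B=\bigoplus_{d\ge 0} B_d$ left Noetherian, flat over $B_0$ as a right module, with $B_0$ of finite Tor-dimension. It gives $\RG_n(B_0)\cong \RG_n(B)$ via $M\mapsto B\otimes_{B_0}M$. That proof uses only gradings and the graded Koszul-type sequence $0\to B(-1)\otimes_{B_0}N\to B\otimes_{B_0}N\to N\to0$ for graded $N$, where $B(-1)$ denotes the degree-shifted module; commutativity is never needed, and the twist by $\alpha$ only changes how $t$ acts.

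From there the conclusion is formal. Regularity of $A$ and of $A_\alpha[t]$ gives $\RK=\RG$ on both sides, by resolution, which is exactly where the first two parts are used. The composite $\RK_n(A)\to\RK_n(A_\alpha[t])\xrightarrow{\epsilon}\RK_n(A)$ is the identity, and the first map is now an isomorphism. Hence $\epsilon$ is an isomorphism, so $\RNK_n(A,\alpha)=\ker\epsilon=0$.

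I expect the main obstacle to be checking that Quillen's graded argument goes through verbatim for the skew grading. In particular, one has to confirm that the shift-and-twist maps are module homomorphisms; this is where $\alpha$ being an automorphism, not merely an endomorphism, is essential. I would verify this first, and otherwise cite Farrell--Hsiang, who proved exactly this statement.
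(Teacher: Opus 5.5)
\textbf{The regularity step is circular when $A$ has infinite global dimension.} Your overall route is the standard argument behind the references the paper simply cites (Farrell--Hsiang, Farrell--Jones, Grayson, Bartels--L\"uck): the skew Hilbert basis theorem, regularity of $A_\alpha[t]$, Quillen's isomorphism $\RG_*(A)\cong\RG_*(A_\alpha[t])$ for the positively graded ring $A_\alpha[t]$ (free over $A$, with $A=A_\alpha[t]/tA_\alpha[t]$ of Tor-dimension one), then resolution and the retraction argument. The Noetherian and $\RG$-theory steps are fine. The problem is regularity of $A_\alpha[t]$ in the generality the paper needs. There, ``left regular'' means every finitely generated left module has finite projective dimension, with no uniform bound. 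In Proposition~\ref{propB} the ring $R$ may have infinite Krull dimension, and only the localizations $\M\otimes_\BZ R_{\mathfrak m}$ are shown to have finite global dimension.

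Your exact sequence only gives $\mathrm{pd}_{A_\alpha[t]}M\le \mathrm{pd}_A M+1$, and $\mathrm{pd}_A M$ is uncontrolled because $M$ is not finitely generated over $A$. Appealing to Quillen's graded argument does not help. It yields $\RG_*(A)\cong\RG_*(A_\alpha[t])$, but converting this into a statement about $\RK_*(A_\alpha[t])$ requires $\RK(A_\alpha[t])\simeq\RG(A_\alpha[t])$, which is exactly the regularity you wanted to extract from it. So outside the finite-global-dimension case (where McConnell--Robson suffices), the proof is incomplete.

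\textbf{A direct fix.}
\begin{enumerate}
\item Write $B=A_\alpha[t]$ and $F_n=\bigoplus_{i\le n}At^i$. Choose a finitely generated $A$-submodule $M_0$ with $BM_0=M$, and filter $M$ by $F_nM_0$.
\item The filtration is positive and $\mathrm{gr}\,B=B$, so lifting graded free resolutions gives $\mathrm{pd}_BM\le\mathrm{pd}_B\,\mathrm{gr}M$. It therefore suffices to treat a finitely generated graded $B$-module $N$; each $N_d$ is finitely generated over $A$.
\item The $t$-power torsion $T\subset N$ is a finitely generated graded submodule killed by some $t^K$. It is thus finitely generated over $A$, since $B/Bt^K=\bigoplus_{i<K}At^i$.
\item On $N/T$, left multiplication by $t$ is an injective $\alpha$-semilinear map $(N/T)_d\to(N/T)_{d+1}$. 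It is surjective once $d$ exceeds the degrees of a set of generators.
\item Hence $\mathrm{pd}_A(N/T)=\sup_d\mathrm{pd}_A(N/T)_d$ is a maximum over finitely many finitely generated $A$-modules; twisting by a power of $\alpha$ does not change projective dimension.
\item So $\mathrm{pd}_AN\le\max(\mathrm{pd}_AT,\mathrm{pd}_A(N/T))<\infty$, and your exact sequence gives $\mathrm{pd}_BN<\infty$.
\end{enumerate}
For $n<0$ you should also invoke the vanishing of negative $\RK$-groups of left regular Noetherian rings, applied to $A$ and $A_\alpha[t]$.
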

\begin{proof} This is standard, see e.g. \cite[Thm. 25]{FaHs}, \cite[Prop. 1.1]{FaJo}, \cite{Grayson}, \cite[Theorem 10.1 and Corollary 10.3]{BaLuck}.\end{proof}

 Let $\alpha: G \to G$ be an automorphism of a finite group $G$. We shall also write $\alpha: R[G] \to R[G]$ for the $R$-linear ring automorphism of $R[G]$ induced by the group automorphism.  
The main result of this section is:

  \begin{theorem}\label{thm1_appB}
Let $G$ be a finite group,  $\alpha: G\to G$ an automorphism, and $R$ a commutative ring which  is regular, Noetherian, flat over $\BZ$, and such that $R/p$ is regular for all primes $p$ that divide $|G|$. Then, for any $n\in\BZ$, the group $\NK_n(R[G],\alpha)$ is annihilated by a power of $|G|$.
\end{theorem}

Note that 
$\NK_n(R[G],{\rm id})=\NK_n(R[G])$ so this result also gives an unspecified finite annihilator for $\NK_n(R[G])$. However, for $\alpha={\rm id}$, Theorem \ref{thm:general} is stronger since it gives an annihilator for $\NK_n(R[G])$ which only depends on $|G|$, $n$, 
and the primes that divide $|G|$ and are non-units in $R$.

\begin{proof}
This follows the main steps in the proof of Theorem \ref{thm1_app}, adjusted to the skew set-up. We thank M.~Morrow for suggesting a simplification to our original argument. 

For simplicity, we set $q=|G|$. By \cite[Theorem 1.2]{FaJo}  there exists a $\BZ$-order $\M$ in $\Q[G]$ which contains $\BZ[G]$, is $\alpha$-invariant, and is a hereditary  ring, hence left/right regular.
In addition, we have $q\M\subset \BZ[G]$.

\begin{proposition}\label{propB}
Under the above assumptions on $R$,  $\M\otimes_\BZ R$ is left regular Noetherian.
\end{proposition}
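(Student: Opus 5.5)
The approach is to check regularity locally, separating the primes that divide $q$ from the rest. Noetherianity is immediate, since $\M$ is a finitely generated $\BZ$-module and so $\M\otimes_\BZ R$ is a finite $R$-algebra with $R$ Noetherian. For regularity (finite global dimension on finitely generated modules, or finite projective dimension of every f.g. module), we localize at each maximal ideal $\mathfrak m$ of the commutative ring $R$. It suffices that $\M\otimes_\BZ R_{\mathfrak m}$ has finite left global dimension for every $\mathfrak m$. The reason is that $\mathrm{Ext}$ of f.g. modules over a module-finite $R$-algebra commutes with localization at primes of $R$. The dimension of $R$ is locally finite, but it is not uniformly bounded, so "regular" is read as every f.g. module having finite projective dimension, which is local.

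\emph{Primes not dividing $q$.} If the residue characteristic of $\mathfrak m$ does not divide $q$, or if $R_{\mathfrak m}$ is a $\Q$-algebra, then $q$ is a unit. Hence $\M\otimes R_{\mathfrak m}=\BZ[G]\otimes R_{\mathfrak m}=R_{\mathfrak m}[G]$, because $q\M\subset\BZ[G]\subset \M$. This group ring is a separable algebra over the regular local ring $R_{\mathfrak m}$, so its global dimension equals $\dim R_{\mathfrak m}<\infty$.

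\emph{Primes $p\mid q$.} Now let $p\mid q$ and assume $p\in\mathfrak m$. Write $\Lambda=\M\otimes\BZ_{(p)}$, a hereditary $\BZ_{(p)}$-order, and consider $\Lambda_R:=\Lambda\otimes_{\BZ_{(p)}}R_{\mathfrak m}$. The planned steps are as follows.
\begin{enumerate}
\item Apply the standard change-of-rings bound for a central non-zero-divisor: if $p$ is a non-zero-divisor on $\Lambda_R$ (true by $\BZ$-flatness of $R$), then $\mathrm{gldim}\,\Lambda_R\le \mathrm{gldim}(\Lambda_R/p)+1$, valid for Noetherian rings with $p$ in the Jacobson radical. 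Here $p\in\mathfrak m$ lies in the radical of the module-finite $R_{\mathfrak m}$-algebra.
\item Describe $\Lambda_R/p=(\Lambda/p)\otimes_{\BF_p}(R_{\mathfrak m}/p)$. By hypothesis $R/p$ is regular, so $B:=R_{\mathfrak m}/p$ is a regular local $\BF_p$-algebra. Thus we need $\mathrm{gldim}\big((\Lambda/p)\otimes_{\BF_p}B\big)<\infty$.
\item Use the structure of hereditary orders. The completion $\wh\Lambda$ is a product of hereditary orders in simple algebras over local fields, and these have the standard block-triangular description over a maximal order. As a result, $\Lambda/p$ is a finite-dimensional $\BF_p$-algebra that is Morita equivalent to iterated extensions built from $\mathrm{rad}$ and semisimple quotients $\Lambda/\mathrm{rad}\Lambda$ that are products of matrix algebras over finite fields; these finite fields are separable over $\BF_p$.
\end{enumerate}

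The main obstacle is the finiteness of global dimension in step 3: $\Lambda/p$ itself usually has infinite global dimension. For example, $\BF_p[x]/(x^e)$ arises from a maximal order in a ramified extension. So I will not reduce mod $p$ naively. Instead I will reduce modulo the Jacobson radical $\mathfrak r$ of $\Lambda$, using the hereditary property: $\mathfrak r$ is projective as a left and right $\Lambda$-module, $\mathfrak r$ is invertible, and $\mathfrak r^h=\pi\Lambda$ for a suitable central-ish element. I will replace step 1 by the exact sequence $0\to \mathfrak r\otimes R_{\mathfrak m}\to\Lambda_R\to (\Lambda/\mathfrak r)\otimes R_{\mathfrak m}\to 0$, in which the first term is projective over $\Lambda_R$ because $\mathfrak r$ is projective over $\Lambda$ and $R$ is flat. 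Combined with $\mathfrak r$-adic filtration, this shows that a f.g. $\Lambda_R$-module $X$ has finite projective dimension as soon as the $(\Lambda/\mathfrak r)\otimes B$-modules $\mathfrak r^iX/\mathfrak r^{i+1}X$ do. The key tool is the generalization of the Auslander--Goldman/ Kaplansky argument for hereditary orders: $\mathrm{pd}_{\Lambda_R}$ of a module killed by $\mathfrak r$ is bounded by $1+\mathrm{pd}$ over $\Lambda/\mathfrak r\otimes B$.

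Finally, $(\Lambda/\mathfrak r)\otimes_{\BF_p}B$ is a product of matrix rings over $k\otimes_{\BF_p}B$ with $k/\BF_p$ finite separable. Such a ring $k\otimes_{\BF_p} B$ is finite étale over the regular ring $B$, hence regular of finite global dimension. This gives $\mathrm{gldim}\,\Lambda_R\le \dim B+2<\infty$, which is the local statement needed. The point requiring the most care is making the radical-filtration argument rigorous over the non-local base $R_{\mathfrak m}$, since $\mathfrak r\otimes R_{\mathfrak m}$ need not be the radical of $\Lambda_R$. I would handle this by proving directly that $\mathfrak r\Lambda_R$ is invertible and projective and that $\Lambda_R/\mathfrak r\Lambda_R$ has finite global dimension, and then applying the induction on the $\mathfrak r$-adic filtration (note $\mathfrak r^h\ni p$, and $p$ is a non-zero-divisor).
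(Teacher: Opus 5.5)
Your route differs from the paper's and can be made to work, but as written it has a gap at the last step.

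\textbf{The gap.} The $\mathfrak r$-adic filtration $X\supseteq \mathfrak r X\supseteq \mathfrak r^2X\supseteq\cdots$ of a finitely generated $\Lambda_R$-module is finite only when $X$ is killed by a power of $\mathfrak r\Lambda_R$. Since $p\Lambda\subseteq\mathfrak r$ and $\mathfrak r^h\subseteq p\Lambda$ for $h\gg0$, this means only $p$-power torsion modules. For $p$-torsion-free modules the filtration never terminates: $\Lambda_R$ itself, its left ideals, every first syzygy. Having discarded your step 1, nothing in your argument covers these modules.

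\textbf{The fix.} What is missing is a Nakayama argument with the central non-zero-divisor $p\in\mathfrak m$.
\begin{enumerate}
\item Let $Y$ be finitely generated and $p$-torsion-free. Then $Y/pY$ is killed by $\mathfrak r^h$, so your filtration argument gives $\mathrm{pd}(Y/pY)\le\dim B+1$.
\item Apply $\mathrm{Ext}_{\Lambda_R}(-,Z)$, with $Z$ finitely generated, to $0\to Y\xrightarrow{p}Y\to Y/pY\to0$. For $i\ge\dim B+1$, multiplication by $p$ is then surjective on the finitely generated $R_{\mathfrak m}$-module $\mathrm{Ext}^i_{\Lambda_R}(Y,Z)$.
\item By Nakayama this Ext group vanishes, so $\mathrm{pd}\,Y\le\dim B$.
\item Syzygies of any finitely generated $X$ are $p$-torsion-free, so $\mathrm{pd}\,X\le\dim B+1=d+1$. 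This matches the paper's bound; your $\dim B+2$ is not sharp.
\end{enumerate}

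\textbf{Your flagged concern is harmless.} You never need $\mathfrak r\Lambda_R$ to be the radical of $\Lambda_R$. Nor do you need invertibility of $\mathfrak r$ or the block-triangular structure of hereditary orders. You only use three facts:
\begin{itemize}
\item $\mathfrak r\Lambda_R$ is a projective two-sided ideal, by flat base change.
\item $\Lambda_R/\mathfrak r\Lambda_R=(\Lambda/\mathfrak r)\otimes_{\BF_p}B$ has finite global dimension.
\item $\mathfrak r^h\subseteq p\Lambda$.
\end{itemize}
The bound for modules killed by $\mathfrak r$ is just the elementary inequality $\mathrm{pd}_{\Lambda_R}X\le\mathrm{pd}_{\Lambda_R/\mathfrak r\Lambda_R}X+\mathrm{pd}_{\Lambda_R}(\Lambda_R/\mathfrak r\Lambda_R)\le\dim B+1$. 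No Auslander--Goldman-type argument is needed.

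\textbf{Comparison with the paper.} The paper writes $\mathfrak m=(p,x_1,\dots,x_d)$ and passes to the unramified DVR $S=R_{\mathfrak m}/(x_1,\dots,x_d)$. It shows $\M\otimes_\BZ S$ is hereditary via completion, Reiner's criterion (an order over a complete DVR is hereditary iff its radical is projective), and faithfully flat descent. It then climbs back up by $d$ applications of the central-regular-element Lemma~\ref{lemmaB2} to the $x_i$. You cut in the transverse direction, by the radical of the order itself. Projectivity of $\mathfrak r$ survives the flat base change for free, and the quotient is a product of matrix rings over finite \'etale $B$-algebras. So you need no choice of parameters, no completion or descent, and only one Nakayama step, with $p$ instead of the $x_i$. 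Both arguments rest on the same two inputs: $\M_{(p)}$ is hereditary, and its semisimple quotient is separable over the perfect field $\BF_p$. Both bound the local global dimension by $d+1$.
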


\begin{proof}
The ring $\M\otimes_\BZ R$ is left (and right) Noetherian. Since $\M[1/q]=\BZ[G][1/q]$ we can easily see that 
$(\M\otimes_\BZ R)[1/q]=R[G][1/q]$ is left regular: Indeed, suppose $M$ is a finitely generated $R[G][1/q]$-module.  Since $q=|G|$, the standard averaging argument shows that $M$ is a direct summand of an induced module $\BZ[G]\otimes_\BZ N$, with $N$ a finitely generated $R[1/q]$-module. But $\BZ[G]\otimes_\BZ N$ has a finite projective resolution obtained by tensoring a finite projective resolution of $N$ which exists since $R[1/q]$ is regular.  It now remains to show that the localization $\M\otimes_\BZ R_{\frak m}$ is left regular, for  every maximal ideal
$\frak m$ of $R$ above a prime $p$ which divides $q=|G|$. By our assumptions on $R$, the prime $p$ is part of a regular system of parameters of $R_{\frak m}$ and 
we can write ${\frak m}=(p,x_1,\ldots, x_d)$, where $d+1=\dim(R_{\frak m})$.  
We  set $S=R_{\frak m}/(x_1,\ldots, x_d)$. This is a regular local ring of dimension $1$,
with uniformizer $p$, i.e. a DVR which is flat and unramified over $\BZ_{(p)}$. 

\begin{lemma}\label{lemmaB1}
 The ring $\M\otimes_\BZ S$ is hereditary.
 \end{lemma}
 
 \begin{proof} Let
 $k=S/(p)$ be the residue field of $S$ which is a (separable) extension of the perfect field $\BF_p$. 
 We have $\M\otimes_\BZ S=\M_{(p)}\otimes_{\BZ_{(p)}}S$. Since $\M_{(p)}$ is hereditary, if $S/\BZ_{(p)}$ is in addition finite,
 $\M_{(p)}\otimes_{\BZ_{(p)}}S$ is hereditary by \cite{Jan}. For a general flat and unramified extension $S/\BZ_{(p)}$ of DVRs, 
 a similar proof applies: Let $J:={\rm rad}(\M_{(p)})$ so that $\M_{(p)}/J$ is a finite dimensional semisimple $\BF_p$-algebra. 
  Since  $\M_{(p)}$ is hereditary, $J$ is a projective $\M_{(p)}$-module.
Consider the $p$-adic completion $\hat S$ and set $B:=\M_{(p)}\otimes_{\BZ_{(p)}}\hat S$, $J_B:=J\otimes_{\BZ_{(p)}}\hat S$.
Since $pB\subset J_B$ and $J_B/pB$ is nilpotent, $J_B$ is contained in the Jacobson radical ${\rm rad}(B)$. Now
\[
B/J_B=(\M_{(p)}/J)\otimes_{\BF_p}k,
\]
which is still semisimple. It follows that $J_B={\rm rad}(B)$. Since 
$J$ is a projective $\M_{(p)}$-module, $J_B$ is a projective $B$-module.
Recall that an order over a complete DVR is hereditary if and only if its Jacobson radical is projective, see \cite[Thm. (39.1)]{Reiner}.
Hence, $B=\M_{(p)}\otimes_{\BZ_{(p)}}\hat S$ is hereditary.  We can now conclude by faithfully flat descent along $S\to \hat S$ that $\M_{(p)}\otimes_{\BZ_{(p)}}S=\M\otimes_\BZ S$
is hereditary.
 \end{proof}

We will  need the following
rather standard lemma.
 
 \begin{lemma}\label{lemmaB2}
 Suppose $(A,\frak m)$ is a Noetherian commutative local ring and $x\in \frak m$ a non-zero divisor in $A$. 
 Let $\Lambda$ be a unital associative $A$-algebra which is a finite  free $A$-module. If $\Lambda/x\Lambda$ has finite left global dimension equal to $N$,
 then $\Lambda$ has finite left global dimension equal to $N+1$.
 \end{lemma}
 
 \begin{proof} This follows from \cite[Prop. 5.6]{Ramras} since, under our assumptions, $x$ is also regular in $\Lambda$. 
 \end{proof}

 We can now complete the proof of Proposition \ref{propB}. By Lemma \ref{lemmaB1}, $\M\otimes_\BZ S$  has left global dimension $1$. 
 An inductive argument starting from $S=R_{\frak m}/(x_1,\ldots, x_d)$ and using Lemma \ref{lemmaB2} for the induction step, 
 gives that
 $\M\otimes_{\BZ}R_{\frak m}$ has left global dimension $d+1$ and so is left regular. \end{proof}
 \smallskip

Let us now continue with the proof of Theorem \ref{thm1_appB}. Set $I=q\M\otimes_\BZ R\subset R[G]=\BZ[G]\otimes_\BZ R$. This is a two-sided ideal of $R[G]$ which is mapped isomorphically to the central ideal $J=q(\M\otimes_\BZ R)$ of $\M\otimes_\BZ R$
  under the natural injection $R[G]=\BZ[G]\otimes_\BZ R\hookrightarrow \M\otimes_\BZ R$. By \cite[Theorem 13.3,  Exercise 1ZA(c)]{GoodWar},
 central ideals in Noetherian rings have the Artin-Rees property. Hence, by \cite[Lemma 2.1, Remark 2.2]{Morrowpro2} the pro-Tor groups vanish:
 \[
 \{{\rm Tor}^{\M\otimes_\BZ R}_i((\M\otimes_\BZ R)/J^r, (\M\otimes_\BZ R)/J^r)\}_r=(0),
 \]
for all $i>0$. By \cite[Theorem 0.2]{Morrowpro2}, the ideal $I$ is pro-Tor unital and by work of Geisser-Hesselholt \cite{GHbi}, $\RK$-theory pro-excision holds.
Hence, the bi-relative pro-abelian groups vanish:
\[
\{\RK_n(R[G], \M\otimes_\BZ R, I^r)\}_r=(0),
\]
 for all $n\in \BZ$. A similar argument works for the skew polynomial rings:
We consider $R[G]_\alpha[t]\hookrightarrow (\M\otimes_\BZ R)_\alpha[t]$ and recall that  $I$ is $\alpha$-invariant. This inclusion
maps the two-sided ideal $I_\alpha[t]$ of polynomials with coefficients in $I$ isomorphically to
the ideal $(q)=(q\M\otimes_\BZ R)_\alpha[t]$.  We have $(I_\alpha[t])^r=I^r_\alpha[t]$, for $r\geq 1$, and this maps isomorphically to $(q^r)$ in $(\M\otimes_\BZ R)_\alpha[t]$.
As before, we conclude  that 
\[
\{\RK_n(R[G]_\alpha[t], (\M\otimes_\BZ R)_\alpha[t], I^r_\alpha[t])\}_r=(0),
\]
for all $n\in \BZ$. We now obtain 
 exact sequences of pro-abelian groups
  \begin{multline}
  \cdots\to \RNK_n(R[G],\alpha)\to \RNK_n(\M\otimes_\BZ R,\alpha)\oplus \{\RNK_n(R[G]/I^r, \alpha)\}_r \to\\\to \{\RNK_n((\M\otimes_\BZ R)/q^{r},\alpha)\}_r\to\cdots. 
  \end{multline}
By Proposition \ref{propB} and Theorem \ref{skewhomotopy}, $\RNK_n(\M\otimes_\BZ R,\alpha)=(0)$, so these become exact sequences of pro-abelian groups
\[
  \{\RNK_{n+1}(\M\otimes_\BZ R/q^{r}, \alpha)\}_r   \to \RNK_n(R[G],\alpha)\to  \{\RNK_n(R[G]/I^r, \alpha)\}_r. 
\]
By Lemma \ref{lemma_app}, it is enough to show that, for each $n$ and each $r$, the groups
$\RNK_{n}(R[G]/I^r, \alpha)$ and $\RNK_{n }(\M\otimes_\BZ R/q^{r}, \alpha)$ are annihilated by a power of $q$  (which is allowed to depend on $r$).
Note that $q\BZ[G]\subset q\M$ and so, for each $r$, $R[G]/I^r$, $\M\otimes_\BZ R/q^{r}$, and the corresponding skew polynomial rings are
all annihilated by some power of $q$. Using \cite[Thm. A]{GH2} (Theorem \ref{GH_app}) and applying a devissage similar to the one in the proof of Theorem \ref{thm1_app} we reduce to showing 
that  
\[
\RNK_{n}(R[G]/I', \alpha), \qquad \RNK_{n }(\M\otimes_\BZ R/q', \alpha),
\]
are annihilated
by a power of $q'$, where $q'$ is the product of the distinct prime factors of $q$ and $I'=I+(q')$. We now decompose into a direct sum over these prime factors
and concentrate on one of them, denoted by $p$. Consider the Jacobson radicals $J(\BF_p[G])$ and $J(\M/p)$ 
of $\BF_p[G]$ and $\M/p$, respectively. These are two-sided $\alpha$-invariant nilpotent ideals of these Artinian rings and they generate
corresponding two-sided nilpotent ideals $J_1\subset R[G]/I'$ and $J_2\subset \M\otimes_\BZ R/q'$, respectively.
Apply \cite[Thm. A]{GH2} once again: First to  $\RK$-groups relative to the ideals $J_1$ and $J_2$, and then to $\RK$-groups relative to the ideals generated by $J_1$ and $J_2$ in the 
skew polynomial rings $(R[G]/I')_\alpha[t]$ and $(\M\otimes_\BZ R/q')_\alpha[t]$, respectively. This further reduces us to bounding
\begin{equation}\label{lastNils}
\RNK_{n}(R[G]/(I'+J_1), \alpha), \qquad \RNK_{n }((\M\otimes_\BZ R/q')/J_2, \alpha).
\end{equation}
However, both $R[G]/(I'+J_1)$ and $(\M\otimes_\BZ R/q')/J_2$ are  tensor products of semisimple Artinian rings over $\BF_p$
with $R/p$. Hence, they are both isomorphic to a finite direct product of rings of the form ${\rm Mat}_{m\times m}({\BF}_{p^f})\otimes_{\BF_p}R/p$.
Since $R/p$ is regular, these tensor products are left regular. Hence, the Nil groups \eqref{lastNils} are trivial by Theorem \ref{skewhomotopy}.
\end{proof}
\smallskip

\subsection{$\RK$-theory of virtually cyclic groups}
Recall that a group is called \emph{virtually  cyclic} if it contains a finite index subgroup which is cyclic. Such a group is either finite, or 
\emph{virtually  infinite cyclic}.
This class of groups plays an important role in the $\RK$-theory of group rings. 
Indeed, the Farrell–Jones conjecture predicts, in broad terms, that the algebraic $\RK$-theory of arbitrary group rings is assembled from the contributions of virtually cyclic subgroups.

\begin{theorem}\label{cor2_appB}
For every $n\in \BZ$ and every virtually   cyclic group $\Gamma$, the group $\RK_n(\BZ[\Gamma])$ is isomorphic to a direct sum 
$
\Lambda\oplus (\oplus_\infty T)
$
of a finitely generated abelian group $\Lambda$
with an infinite countable direct sum  of copies of a finite abelian group $T$.
\end{theorem}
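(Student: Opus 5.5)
The plan is to split according to the structure of virtually cyclic groups. If $\Gamma$ is finite, $\RK_n(\BZ[\Gamma])$ is finitely generated. This holds for $n\geq 0$ by Quillen/Kuku finiteness for orders, for $n=-1$ by Carter, and for $n\le -2$ the groups vanish. We then take $\Lambda=\RK_n(\BZ[\Gamma])$ and $T=0$.

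If $\Gamma$ is infinite, the standard structure theorem says that $\Gamma$ is either $G\rtimes_\alpha\BZ$ for a finite group $G$ and automorphism $\alpha$, or $\Gamma=\Gamma_1*_G\Gamma_2$ is an amalgam of finite groups along a common subgroup $G$ of index two.

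\textbf{Semidirect products.} In the first case I will use the Farrell--Hsiang twisted Bass--Heller--Swan decomposition:
\[
\RK_n(\BZ[G\rtimes_\alpha\BZ])\cong \RK_n\text{-part}\oplus \RNK_n(\BZ[G],\alpha)\oplus\RNK_n(\BZ[G],\alpha^{-1}).
\]
Here the ``$\RK_n$-part'' sits in the Wang sequence
\[
\RK_n(\BZ[G])\xrightarrow{1-\alpha_*}\RK_n(\BZ[G])\to (\cdot)\to \RK_{n-1}(\BZ[G])\xrightarrow{1-\alpha_*}\RK_{n-1}(\BZ[G]),
\]
so it is finitely generated, being an extension of finitely generated groups. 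By Theorem~\ref{thm1_appB} with $R=\BZ$, each Farrell Nil group has finite exponent. By \cite[Theorem C]{LPW}, each is then isomorphic to $\oplus_\infty H_\pm$ for a finite abelian group $H_\pm$, or is zero. Since
\[
(\oplus_\infty H_+)\oplus(\oplus_\infty H_-)\cong\oplus_\infty(H_+\oplus H_-),
\]
we may take $T=H_+\oplus H_-$.

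\textbf{Amalgams.} In the second case I will invoke Davis--Khan--Ranicki \cite{DKR}. They identify the Waldhausen Nil term $\widetilde{\mathrm{Nil}}$ of the amalgam with a Farrell Nil group $\RNK_n(\BZ[G],\alpha)$ of the canonical index-two subgroup $G\rtimes_\alpha\BZ$, up to a shift of indexing as in their paper. Waldhausen's splitting then gives
\[
\RK_n(\BZ[\Gamma])\cong(\text{finitely generated})\oplus \widetilde{\mathrm{Nil}}_n .
\]
The finitely generated part comes from the Mayer--Vietoris sequence
\[
\RK_n(\BZ[G])\to\RK_n(\BZ[\Gamma_1])\oplus\RK_n(\BZ[\Gamma_2])
\]
with finitely generated terms. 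Again Theorem~\ref{thm1_appB} and \cite[Theorem C]{LPW} give $\widetilde{\mathrm{Nil}}_n\cong\oplus_\infty T$.

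\textbf{Main obstacle.} The delicate step is making sure these decompositions are genuine direct sum splittings, and not merely extensions. Once the Nil summand splits off, an abstract extension involving $\oplus_\infty T$ is no longer a problem. So I will rely on the naturality of the retractions in Farrell--Hsiang and Waldhausen, and on \cite{DKR} for matching Waldhausen Nil with Farrell Nil in all degrees $n\in\BZ$, including negative ones. Negative degrees are covered because Theorem~\ref{thm1_appB} holds for all $n\in\BZ$.

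A second point to check is that \cite[Theorem C]{LPW} applies to $\RNK_n(\BZ[G],\alpha)$ for every $n$. Its only input is that the group is finite exponent and countable, together with Farrell's Frobenius/Verschiebung operators. Countability is clear because $\BZ[\Gamma]$ is countable.
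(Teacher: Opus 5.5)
Your proposal is correct and follows the paper's proof: the finite case via Kuku and Carter, the split into the two types of infinite virtually cyclic groups, the Farrell--Hsiang and Waldhausen splittings together with \cite{DKR} to isolate one or two Farrell Nil summands, and then Theorem~\ref{thm1_appB} for $R=\BZ$ combined with \cite[Theorem C]{LPW}. One thing to add to your discussion of \cite{LPW}: its hypothesis that the automorphism have finite order is automatic here, since $\alpha$ (and likewise $\alpha^{-1}$) is induced by an automorphism of the finite group $G$.
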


\begin{proof} If $\Gamma$ is  finite, $\RK_n(\BZ[\Gamma])$ is finitely generated   by results of Kuku \cite{Kuku2} and Carter \cite{Carter}. 

Suppose now that $\Gamma$ is infinite, so it
is virtually infinite cyclic. There are two types of virtually infinite cyclic groups $\Gamma$, see \cite[Lemma 3.31]{DKR}:
\begin{itemize}
\item[i)]   $\Gamma\simeq G\rtimes_\alpha \BZ$, where $G$ is a finite group and $\BZ$ acts on $G$ by an automorphism $\alpha: G\to G$. 

\item[ii)] $\Gamma\simeq G_0\star_H G_1$, an amalgamated free product, where $G_0$, $G_1$ are finite groups and $H$ is a subgroup of $G_0$ and of $G_1$ with $[G_0:H]=[G_1:H]=2$.
In this case, $\Gamma$ surjects  to the infinite dihedral group $D_\infty=\BZ\rtimes\BZ_2 $ with finite kernel. This gives a canonical index $2$ subgroup $\ov\Gamma\subset \Gamma$
which is of type (i), in fact, $\ov\Gamma\simeq H\rtimes_\alpha \BZ$, with $\alpha: H\to H$ an automorphism, see \cite{DKR}.
\end{itemize}

The algebraic $\RK$-theory decomposition theorems of Waldhausen (see for example \cite[(1), (2), p. 2392]{DKR}), combined with \cite[Thm 0.1]{DKR},
express $\RK_n(\BZ[\Gamma])$ as a direct sum of two abelian groups: The first  group involves $\RK$-groups of integral group rings of \emph{finite} groups ($\BZ[G]$ in type (i), or $\BZ[G_0]$, $\BZ[G_1]$, $\BZ[H]$, in type (ii), with   notations as above). The second group is either isomorphic to a Farrell Nil group (in type (ii)), or to a direct sum of two Farrell Nil groups (in type (i)). By results of Kuku \cite{Kuku2} and Carter \cite{Carter}, the first group is  finitely generated. By Theorem \ref{thm1_appB} for $R=\BZ$ combined with
\cite[Theorem C]{LPW}, the second group is isomorphic  to $\oplus_\infty T$ as above.
\end{proof}

 \subsubsection{The  Whitehead group} The case $n=1$  is of particular interest due to a connection with the Whitehead group. Recall that the Whitehead group ${\rm Wh}(\Gamma)$ of a group $\Gamma$ is the quotient of $\RK_1(\mathbb{Z}[\Gamma])$ by the image of $\{\pm \gamma: \gamma \in \Gamma\} \subset \mathbb{Z}[\Gamma]^*$ (see \cite[2.4.1]{Rosenberg}).  
We obtain:  

\begin{corollary}\label{cor3_appB} Suppose $\Gamma$ is a virtually cyclic group.
The torsion subgroup ${\rm Wh}(\Gamma)_{\rm tor}$ of the Whitehead group ${\rm Wh}(\Gamma)$
has finite   exponent.
\end{corollary}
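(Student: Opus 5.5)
We will deduce Corollary \ref{cor3_appB} from the structure of $\RK_1(\BZ[\Gamma])$ given by Theorem \ref{cor2_appB} (for $n=1$). That theorem gives an isomorphism
\[
\RK_1(\BZ[\Gamma])\simeq \Lambda\oplus (\oplus_\infty T),
\]
with $\Lambda$ finitely generated and $T$ a finite abelian group. The torsion subgroup of this group is $\Lambda_{\rm tor}\oplus(\oplus_\infty T)$. It is annihilated by $|\Lambda_{\rm tor}|\cdot e(T)$, where $e(T)$ is the exponent of $T$. Hence $\RK_1(\BZ[\Gamma])_{\rm tor}$ has finite exponent.

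We then pass to the quotient ${\rm Wh}(\Gamma)=\RK_1(\BZ[\Gamma])/U$, where $U$ is the image of $\{\pm\gamma\}$. The subtlety is that the torsion of a quotient need not lift to torsion, so we need information on $U$. The key observation is that $U$ is the image of the abelianization $\Gamma^{\rm ab}\times\{\pm1\}$, and $\Gamma$ is finitely generated since it is virtually cyclic. Therefore $U$ is a finitely generated abelian group.

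Now let $x\in{\rm Wh}(\Gamma)_{\rm tor}$ and choose a lift $\tilde x\in \RK_1(\BZ[\Gamma])$. Consider the subgroup $A=\langle \tilde x\rangle+U$, which is finitely generated. The image of $A$ in ${\rm Wh}(\Gamma)$ is cyclic and finite, but we want a bound on the order of $x$ that is independent of $x$. For this we argue globally.

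Let $P:\RK_1(\BZ[\Gamma])\to \oplus_\infty T$ be the projection, and set $E=e(T)$. Then $E\cdot\RK_1(\BZ[\Gamma])\subset \Lambda':=\Lambda\oplus 0$, which is finitely generated.

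Suppose $kx=0$ in ${\rm Wh}(\Gamma)$, so $k\tilde x\in U$. Then $E k\tilde x\in U$, and also $E\tilde x\in\Lambda'$. Hence $E\tilde x$ lies in the subgroup $\{y\in \Lambda'+U : \exists k\ge1,\ ky\in U\}$. This is the saturation of $U$ inside the finitely generated group $\Lambda'+U$. Modulo $U$, it is the torsion of the finitely generated group $(\Lambda'+U)/U$, which is finite of some exponent $N$.

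It follows that $NE\tilde x\in U$, so $NE\cdot x=0$. Thus $NE$ annihilates ${\rm Wh}(\Gamma)_{\rm tor}$.

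The main obstacle is exactly this quotient issue, and the step above resolves it: the infinitely generated part is killed by $E$ into a finitely generated group, where torsion exponents are automatic. The only input beyond Theorem \ref{cor2_appB} is the finite generation of $U$, which follows from $\Gamma$ being finitely generated.
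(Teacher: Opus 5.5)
Your proposal is correct and follows the same route as the paper, which only remarks that the corollary ``follows quickly from Theorem \ref{cor2_appB} for $n=1$.'' You supply the details the paper omits: $E\cdot\RK_1(\BZ[\Gamma])$ lies in a finitely generated subgroup, and the image $U$ of $\{\pm\gamma\}$ is finitely generated because $\Gamma$ is, so the finite torsion of $(\Lambda'+U)/U$ yields the uniform annihilator $NE$.
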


\begin{proof} This follows quickly from Theorem \ref{cor2_appB} for $n=1$. 
 \end{proof}

\bigskip
 
\appendix
 
 \section{Boundedness of $\NK$-groups via pro-cdh descent}\label{s:Appendix}

 \centerline{\sc by M. Morrow}
 
 \numberwithin{equation}{section}
\newtheorem{theoremA}[equation]{Theorem}
\newtheorem{propositionA}[equation]{Proposition}
\newtheorem{lemmaA}[equation]{Lemma}
\newtheorem{conjectureA}[equation]{Conjecture}
\newtheorem{remark}[equation]{Remark}

\bigskip

I am grateful to Ted Chinburg, George Pappas, and Martin Taylor for offering me this opportunity to record what I know about the following conjecture, in which we fix a prime number $p$:

\begin{conjectureA}\label{conjecture_app}
Let $X$ be a Noetherian scheme such that $X\otimes_{\bbZ}{\bbZ}[1/p]$ is regular. Then, for any $n\in\bbZ$, the group \[\NK_n(X):=\op{Ker}(\RK_n(\bbA_X^1)\to \RK_n(X))\] is annihilated by a power of $p$.
\end{conjectureA}

Recall that according to a classical result of Weibel \cite[Corol.~6.4]{WMV} one has \[\NK_n(X)[1/p]=\NK_n(X\otimes_\BZ\BZ[1/p]),\] which vanishes since $X\otimes_\BZ\BZ[1/p]$ is assumed to be regular. The conjecture is predicting that more is true, namely that the torsion group $\NK_n(X)$ has finite $p$-power exponent (possibly depending on $n$).

If memory serves, this conjecture dates back to conversations with Moritz Kerz and Georg Tamme during a visit to the University of Regensburg in 2015. I have also benefited from discussions about it at least with Ben Antieau, Elden Elmanto, and Akhil Mathew. In any case, although some of the results in this appendix are not explicitly recorded in the literature, a number of the arguments are likely ``well-known to experts.''

The statements and arguments below work for \[\mathrm{N}^m\RK_n(X):=\op{Ker}(\RK_n(\bbA_X^m)\to \RK_n(X))\] for any $m\ge 1$. We restrict to $m=1$ for simplicity of notation.

We begin with the case of interest for the main article, explaining how pro-excision and a theorem of Geisser--Hesselholt can be used to give a short proof of the following result; it shows that the NK-groups of Theorem \ref{thmexpo} are of finite exponent but a priori gives no indication of an explicit bound:

\begin{theorem}\label{thm1_app}
Let $G$ be a cyclic $p$-group, and $R$ a commutative ring satisfying the assumptions of Theorem \ref{thm:MainIntro}, i.e., $R$ is regular, Noetherian, flat over $\BZ$, and $R/p$ is regular. Then, for any $n\in\BZ$, the group $\NK_n(R[G])$ is annihilated by a power of $p$.
\end{theorem}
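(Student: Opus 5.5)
We follow the pro-excision argument already used in the proof of Theorem \ref{thm1_appB}, specialized to the commutative cyclic case. Throughout, $M_G$ is the maximal order and $I=p^mM_G\otimes_\BZ R$ is the conductor ideal, common to $R[G]$ and $M_G\otimes_\BZ R$ (Lemma \ref{lemma:conductor}).

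\emph{Step 1: reduce to quotient rings.} By the pro-excision of \cite{Morrowpro2} and \cite{GHbi}, recalled in \S\ref{sec:ProMV}, the bi-relative pro-groups $\{\RK_n(R[G][t],(M_G\otimes_\BZ R)[t],I^r[t])\}_r$ vanish. The same holds after splitting off the Nil summands. Lemma \ref{regular} and Theorem \ref{cor:easyregular} give $\RNK_n(M_G\otimes_\BZ R)=0$. Combining these, we get an exact sequence of pro-abelian groups
\[
\{\RNK_{n+1}(M_G\otimes_\BZ R/p^{mr})\}_r\to \RNK_n(R[G])\to \{\RNK_n(R[G]/I^r)\}_r .
\]
We then need a small lemma on pro-groups (this is Lemma \ref{lemma_app}). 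Suppose $A$ is a constant group sitting in an exact sequence of pro-groups $\{B_r\}\to A\to\{C_r\}$, and each $B_r$ and each $C_r$ is killed by some power $p^{N_r}$. Then $A$ is killed by a power of $p$. The proof is to unwind the definition of pro-exactness at a single level $r$, where exactness holds up to a transition map. Granting this lemma, it suffices to show the following: for fixed $n$ and $r$, the groups $\RNK_j(R[G]/I^r)$ and $\RNK_j(M_G\otimes_\BZ R/p^{mr})$ are killed by a power of $p$.

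\emph{Step 2: dévissage to $\BF_p$-algebras.} Both $R[G]/I^r$ and $M_G\otimes_\BZ R/p^{mr}$ are $\BZ/p^N$-algebras for some $N$. By \cite[Thm.~A]{GH2}, relative $\RK$-groups of a nilpotent ideal in a $\BZ/p^N$-algebra are $p$-primary of bounded exponent. The same applies to the polynomial rings over these algebras, since the ideal generated in the polynomial ring is again nilpotent. We apply this to the ideals $(p)$, which are nilpotent in these rings. Comparing the long exact sequences for the pairs $(B[t],pB[t])$ and $(B,pB)$ then reduces us to $R[G]/(I^r+p)$ and $M_G\otimes R/p$. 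Next we apply \cite[Thm.~A]{GH2} again to the nilradicals. The nilradical of $\BF_p[G]=\BF_p[x]/(x^{p^m})$ is generated by $x$. The nilradical of $M_G/p$ is a product of pieces $\BF_p[x]/(x^e)$. This leaves $R/p$ itself, or finite products of copies of $R/p$.

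\emph{Step 3: conclude.} The ring $R/p$ is regular, so its Nil groups vanish by Theorem \ref{cor:easyregular}. Hence the original Nil groups are killed by a bounded power of $p$.

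The main obstacle is Step 2. One must check that \cite[Thm.~A]{GH2} gives a bound that is uniform in the non-finite-type rings involved. One must also keep track of how the Nil parts interact with each relative sequence: the bound for each term must depend only on $n$ and the nilpotence degree, so that a bounded number of extensions still yields a bounded exponent.
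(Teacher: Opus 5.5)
Your proposal is correct and follows the paper's proof essentially step for step: pro-excision plus the regularity of $M_G\otimes_\BZ R$ gives the pro-exact sequence, Lemma \ref{lemma_app} reduces to level-wise annihilation, and Geisser--Hesselholt (Theorem \ref{GH_app}), applied to the rings and their polynomial extensions, passes to the reduced rings $R/p$ and $(R/p)^{m+1}$, whose Nil groups vanish (the paper uses the cofinal system $\{R[G]/p^r\}_r$ and goes to the reduced quotient in one step rather than two, which changes nothing). The uniformity worries in your last paragraph are unnecessary: Lemma \ref{lemma_app} lets the power of $p$ depend on $r$, Theorem \ref{GH_app} has no finiteness hypotheses on the ring, and a finite chain of extensions of groups each killed by some power of $p$ is again killed by a power of $p$.
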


We will systematically use pro-abelian groups and so offer a short reminder on them; the reader can consult \cite{ArtinMazur1969}, \cite{Isaksen2002}, or \cite{Morrowpro2} for further details.

Given a category $\cal C$ (for us it will always be an abelian category), the associated category $\op{Pro}\cal C$ of countable pro-objects is defined as follows: its objects consist of inverse systems $\cdots\to A_2\to A_1$ in $\cal C$, denoted by $\{A_r\}_r$, and morphisms in $\op{Pro}\cal C$ are given by \[\Hom_{\op{Pro}\cal C}(\{A_r\}_r,\{B_s\}_s):=\varprojlim_s\indlim_r\Hom_{\cal C}(A_r,B_s).\] We refer to either of the first two references above for the unsurprising definition of composition. There is a fully faithful embedding $\cal C\to\op{Pro}\cal C$ by sending an object of $\cal C$ to the associated constant inverse system.

Now suppose that $\cal C$ is an abelian category. Then so is $\op{Pro}\cal C$, and kernels and cokernels are computed term-wise when it makes sense. An object $\{A_r\}_r\in\op{Pro}\cal C$ is isomorphic to zero if and only if for each $r\ge 1$ there exists $s\ge r$ such that the transition map $A_s\to A_r$ is zero. Such inverse systems are often called Mittag--Leffler zero, and in this case $\op{Pro}\cal C$ can be alternatively defined as the abelian category of countable inverse systems modulo the Serre subcategory of Mittag--Leffler zero systems.

Taking $\cal C=\textrm{Ab}$ to be abelian groups, we obtain the category of {\em pro-abelian groups} $\op{Pro}\textrm{Ab}$. The following simple lemma (which in more conceptual terms concerns a compatibility between Pro and Serre subcategories of abelian categories) will be crucial:

\begin{lemma}\label{lemma_app}
Let $B$ be an abelian group, $\{A_r\}_r$ and $\{C_r\}_r$ pro-abelian groups, and \[\{A_r\}_r\stackrel f\To B\stackrel g\To \{C_r\}_r\] an exact sequence in $\op{Pro}\mathrm{Ab}$. Assume, for each $r\ge1$, that $A_r$ and $C_r$ are annihilated by some power of $p$ (possibly depending on $r$). Then $B$ is annihilated by some power of $p$.
\end{lemma}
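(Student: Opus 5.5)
We argue directly in $\op{Pro}\mathrm{Ab}$, which is abelian, treating the constant object $B$ through the embedding $\mathrm{Ab}\to\op{Pro}\mathrm{Ab}$. Exactness at $B$ gives $\ker(g)=\op{im}(f)$ as subobjects of $B$. We then read off the pro-structure of each side separately: the image of $f$ gives a bound coming from one $A_r$, and the kernel of $g$ gives a quotient controlled by one $C_r$.

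\textbf{The image of $f$.} By definition of Hom in $\op{Pro}\mathrm{Ab}$, a morphism $\{A_r\}_r\to B$ into a constant object is an element of $\indlim_r\Hom(A_r,B)$. So $f$ is represented by a single homomorphism $f_{r_0}\colon A_{r_0}\to B$ for some $r_0$, and $\op{im}(f)$ in $\op{Pro}\mathrm{Ab}$ is the constant group $f_{r_0}(A_{r_0})\subset B$. Indeed, the image is the cokernel of the kernel, computed levelwise on the reindexed system $\{A_r\}_{r\ge r_0}$, and for $r\ge r_0$ these images coincide up to the transition maps. In any case $f_{r_0}(A_{r_0})$ is a quotient of $A_{r_0}$, so it is annihilated by $p^{a}$, where $p^{a}A_{r_0}=0$.

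\textbf{The kernel of $g$.} The morphism $g\colon B\to\{C_r\}_r$ is a compatible family $g_r\colon B\to C_r$, and $\ker(g)$ is the pro-object $\{\ker g_r\}_r$. Since its image under $B\to B$ is identified with a constant object, we want to show that $\ker g$ is pro-isomorphic to a constant subgroup $K\subset B$ with $B/K$ killed by a power of $p$. The cleanest route is to take $\op{coker}(f)$, which is the constant group $B/f_{r_0}(A_{r_0})$. Exactness says $\op{coker}(f)\to\{C_r\}$ is a monomorphism in $\op{Pro}\mathrm{Ab}$.

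We then check what a monomorphism from a constant object $D$ into $\{C_r\}$ means. The kernel $\{\ker(D\to C_r)\}_r$ is a decreasing system of subgroups of $D$ with identity-induced transition maps. It is Mittag--Leffler zero only if for each $r$ there is $s$ with $\ker(D\to C_s)\to\ker(D\to C_r)$ zero. Since these maps are inclusions, this forces $\ker(D\to C_s)=0$ for some $s$. Hence $D$ embeds into $C_s$, so $p^{c}D=0$, where $p^{c}C_s=0$.

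\textbf{Conclusion.} Then $p^{c}B\subset f_{r_0}(A_{r_0})$, and so $p^{a+c}B=0$.

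The main care point is the bookkeeping of Pro-Hom: that a map out of a pro-system into a constant object factors through one stage, and that a pro-monomorphism from a constant object is injective at some stage. Both follow from the Mittag--Leffler description recalled above, and neither is a real obstacle.
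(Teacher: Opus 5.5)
Your final inequality $p^{c}B\subseteq f_{r_0}(A_{r_0})$ is true, but the route you take to it does not work. You claim that $\op{im}(f)$ is the constant group $f_{r_0}(A_{r_0})$ and that $\op{coker}(f)$ is the constant group $B/f_{r_0}(A_{r_0})$; both claims are false.

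Computed levelwise on $\{A_r\}_{r\ge r_0}$, the image is the tower $\{I_r\}_r$ with $I_r=\op{Im}(A_r\to A_{r_0}\xrightarrow{f_{r_0}}B)$. This is a decreasing chain of subgroups of $B$ that need not stabilize, and the cokernel is $\{B/I_r\}_r$.

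Here is a counterexample. Take $B=\bigoplus_{i\ge1}\BZ/p$ and $A_r=\bigoplus_{i\ge r}\BZ/p\subseteq B$, with inclusions as transition maps. Let $f$ be represented by $A_1=B$. Let $C_r=\bigoplus_{i<r}\BZ/p$, with $g_r$ the projection.
\begin{itemize}
\item The sequence is exact at $B$: both $\op{im} f$ and $\ker g$ are the tower $\{A_r\}_r$, which is not pro-constant.
\item We have $f_1(A_1)=B$, yet $\op{coker}(f)=\{B/A_r\}_r\cong\{C_r\}_r\neq 0$.
\end{itemize}

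More to the point, $g$ does not factor through $B/f_{r_0}(A_{r_0})$ in general. The condition $g\circ f=0$ only says that each $g_r$ kills $I_s$ for $s$ large enough; it does not say that $g_r$ kills $I_{r_0}$. So there is no morphism $B/f_{r_0}(A_{r_0})\to\{C_r\}_r$ to which your criterion for monomorphisms out of a constant object could be applied. That criterion is itself correct.

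The repair is the paper's argument. Exactness at $B$ says that the towers $\{\ker g_r\}_r$ and $\{I_s\}_s$ define the same subobject of $B$ in $\op{Pro}\mathrm{Ab}$, i.e. they are intertwined. Equivalently, run your cokernel argument with the true cokernel $\{B/I_s\}_s$: after reindexing, its being a subobject of $\{C_r\}_r$ means exactly that for each $s$ some $\ker g_r$ lies in $I_s$.

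Taking $s=r_0$ gives an $r$ with $\ker g_r\subseteq f_{r_0}(A_{r_0})$. Then $B/f_{r_0}(A_{r_0})$ is a quotient of $B/\ker g_r$, which embeds in $C_r$, so it is killed by $p^{c}$ where $p^{c}C_r=0$. Combined with $p^{a}f_{r_0}(A_{r_0})=0$, this gives $p^{a+c}B=0$, as you intended.
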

\begin{proof}
The morphism $f$ is represented by a map of abelian groups $f_t:A_t\to B$ for some $t\ge 1$ which we fix, while the morphism $g$ is represented by compatible maps of abelian groups $g_r:B\to C_r$ defined for all $r\ge1$. Exactness at the middle of the complex means that the decreasing towers of subgroups of $B$
\[\op{Ker}(g_1)\supseteq \op{Ker}(g_2)\supseteq\op{Ker}(g_3)\supseteq\cdots\]
\[\op{Im}(f_t)\supseteq \op{Im}(A_{t+1}\to A_t\xto{f_t} B)\supseteq \op{Im}(A_{t+2}\to A_t\xto{f_t} B)\supseteq\cdots\]
are intertwined. In particular, there exists $r\ge t$ such that $\op{Im}(f_t) \supseteq \op{Ker}(g_r)$. But our hypotheses imply that $\op{Im}(f_t)$ and $B/\op{Ker}(g_r)$ are annihilated by a power of $p$, whence the same is true of $B$.
\end{proof}

Next we recall:

\begin{theorem}[Geisser--Hesselholt]\label{GH_app}
Let $A$ be a ring in which $p$ is nilpotent, and let $I\subseteq A$ be a nilpotent ideal. Then, for any $n\in\bb Z$, the relative $\RK$-group $\RK_n(A,I)$ is annihilated by a power of $p$.
\end{theorem}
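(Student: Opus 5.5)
The plan is to reduce, via the relative-to-absolute long exact sequence, to the case of a square-zero ideal in an $\BF_p$-algebra, and then to read off the answer from the Hesselholt--Madsen / Dundas--Goodwillie--McCarthy computation of relative $\RK$-theory through topological cyclic homology. Since $p$ is nilpotent in $A$, say $p^N A=0$, and $I^k=0$, we first filter. The chain $A\to A/I^{k-1}\to\cdots\to A/I$ together with the long exact sequences of the triples $(A, I^j, I^{j+1})$ expresses $\RK_n(A,I)$ as built, through finitely many extensions, from groups $\RK_n(A/I^{j+1}, I^j/I^{j+1})$. These involve square-zero ideals. The exponent of an extension is bounded by the product of the exponents, and the number of steps is bounded by $k$, so it suffices to treat $I^2=0$. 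Similarly, the filtration $A\supset pA\supset\cdots\supset p^NA=0$ shows that $pA$ is a nilpotent ideal. Using the triangle $\RK(A,I)$, $\RK(A/pA, I/(I\cap pA))$ and the bi-relative term, I would reduce to the statement for the nilpotent ideals $pA$ and $pA+I$, and then to the statement for $A/p$, which is an $\BF_p$-algebra.

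For a nilpotent ideal, the theorem of Dundas--Goodwillie--McCarthy gives $\RK(A,I)\simeq \mathrm{TC}(A,I)$. Rationally, Goodwillie's theorem identifies $\RK(A,I)\otimes\BQ$ with $\HC(A,I)\otimes\BQ$ shifted by one. When $p$ is nilpotent in $A$, $\HC(A,I)$ is $p^N$-torsion, so $\RK_n(A,I)$ is a torsion group. It is moreover $\ell$-divisible, uniquely so, for every prime $\ell\neq p$: by Weibel's result, relative $\RK$-theory of a nilpotent ideal is $\BZ[1/p]$-local once $p$ is nilpotent. Hence $\RK_n(A,I)$ is a $p$-primary torsion group. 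It remains to bound its exponent uniformly, and this is where I would use $\mathrm{TC}$. For $I^2=0$ in an $\BF_p$-algebra (or a $\BZ/p^N$-algebra), the relative $\mathrm{THH}$ and its fixed points $\mathrm{TR}^r(A,I;p)$ are modules over $\mathrm{TR}^r(\BZ/p^N)$, whose homotopy in each fixed degree is annihilated by a bounded power of $p$. I would then argue that the limit over $r$ and the equalizer of $R$ and $F$ in degree $n$ only involve finitely many $r$ in a bounded way. The reason is that the relative $\mathrm{TR}$ of a square-zero extension decomposes, via the cyclic bar construction, into pieces indexed by the weight $j\geq1$, and in weight $j$ and degree $n$ only the $r$ with $p^{r-1}$ bounded in terms of $n$ contribute nontrivially. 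This is the same counting that produced the floors of logarithms in Proposition \ref{lem:dumbbound}.

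The main obstacle is precisely this uniform bound on the $p$-power in each homotopy degree. The torsion itself is easy, but naive $\mathrm{TR}^r$ has exponent growing with $r$. I would handle it by the weight decomposition of $\mathrm{THH}$ of a square-zero extension $A\ltimes I$: in weight $j$ the spectrum is induced from $C_j$, as $S^1_+\wedge_{C_j}(\ldots)$, with connectivity growing linearly in $j$. Consequently only weights $j\leq n+1$ affect $\pi_n$, and for those the $C_{p^r}$-fixed points stabilize once $p^r>j$, with the $p$-adic valuation of $j$ governing the exponent. Combined with the $\BZ/p^N$-module structure, this gives an annihilator of the form $p^{c(N,k,n)}$. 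General $A$ with non-square-zero $I$ then follows from the filtration argument above.
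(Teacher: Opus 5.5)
The paper does not prove this statement; it simply quotes \cite[Thm.~A]{GH2}. Your reproof has a genuine gap exactly at the step you identify as the main obstacle.

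The decomposition of $\mathrm{THH}$ and $\mathrm{TR}^r$ into weight pieces induced from $C_j$, with connectivity growing in $j$, comes from the grading of the cyclic bar construction of a \emph{split} square-zero extension $B\ltimes M$. It is not available for an arbitrary ring $A$ with $I^2=0$, yet your reductions produce general square-zero extensions, and these need not split:
\begin{itemize}
\item The $pA$ step forces you to treat $\BZ/p^2\to\BF_p$, which has no ring section.
\item Already the $I$-adic filtration of $\BF_p[u]/(u^3)$ with $I=(u)$ produces $\BF_p[u]/(u^3)\to\BF_p[u]/(u^2)$. This has no section, since every lift $u+cu^2$ of $u$ squares to $u^2\neq 0$. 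A general $A$ carries no grading making $I$ the positive part.
\end{itemize}
Without such a grading, nothing forces only boundedly many $r$ to contribute in a fixed degree. The fundamental cofibration sequences give exponents growing with $r$. The module structure over $\mathrm{TR}^r(\BZ/p^N;p)$ gives nothing uniform in $r$ either, since already $\pi_0\mathrm{TR}^r(\BF_p;p)=W_r(\BF_p)=\BZ/p^r$. So at best your argument covers split extensions.

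Your reduction to $\BF_p$-algebras is also circular. Comparing $\RK(A,I)$ with $\RK(A,I+pA)$ leaves the terms $\RK(A,pA)$ and $\RK(A/I,p(A/I))$. These are instances of the theorem itself for $\BZ/p^N$-algebras, not of the $\BF_p$ case. Filtering $pA$ by the $p^jA$ just returns non-split extensions such as $\BZ/p^2\to\BF_p$.

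Passing from split to non-split extensions needs a further idea. For instance, if $I^2=0$ then $(a,b)\mapsto(b,a-b)$ is a ring isomorphism $A\times_{A/I}A\cong A\ltimes I$. In the resulting Milnor square, $\ker(\mathrm{pr}_2)=I\times 0$ maps isomorphically onto $I$ under $\mathrm{pr}_1$. This bounds $\RK_n(A,I)$ in terms of the split group $\RK_n(A\ltimes I,0\ltimes I)$ and a bi-relative $\RK$-group. However, bounded exponent for bi-relative $\RK$-groups of rings in which $p$ is nilpotent is itself a result of comparable depth (the bi-relative part of \cite{GH2}). As it stands, your sketch does not constitute a proof.
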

\begin{proof}
This is \cite[Thm.~A]{GH2}.
\end{proof}

We may now prove Theorem \ref{thm1_app}. In contrast to the main article (see especially the beginning of \S 4.2), we work within the category of pro-abelian groups instead of passing to the limit via the functor $\op{Pro}\mathrm{Ab}\to\mathrm{Ab}$, $\{A_r\}_r\mapsto\varprojlim_r A_r$.

\begin{proof}[Proof of Theorem \ref{thm1_app}]
We adopt the notation of \S4. In particular, $G$ is a cyclic $p$-group of order $p^m$, $R$ a commutative ring satisfying the assumptions of Theorem \ref{thm:MainIntro}, and $M_G$ is the normalisation of $\BZ[G]$ in $\Q[G]$. Recall that
 \[
 p^m R[G]\subseteq p^mM_G\otimes_\bbZ R\subseteq R[G]\subseteq M_G\otimes_\bbZ R
 \]
  and similarly after adding a polynomial variable. Pro-excision for $\RK$-theory \cite[Corol.~0.4]{Morrowpro2} therefore implies that the bi-relative pro-abelian groups $\{\NK_n(R[G],M_G\otimes_\bbZ R, p^{mr}M_G\otimes_\bbZ R)\}_r$ vanish for all $n\in\BZ$, thereby yielding a long exact Mayer--Vietoris sequence of pro-abelian groups 
  \begin{multline}
  \cdots\to \NK_n(R[G])\to \NK_n(M_G\otimes_\bbZ R)\oplus \{\NK_n(R[G]/p^{r})\}_r\to\\ \to \{\NK_n(M_G/p^{r}\otimes_\bbZ R)\}_r\to\cdots. 
  \end{multline}

The regularity of $M_G\otimes_\BZ R$ (Lemma \ref{regular}) implies that $\NK_n(M_G\otimes_\BZ R)=0$, so we obtain exact sequences
 \[
 \{\NK_{n+1}(M_G/p^{r}\otimes_\BZ R)\}_r\To \NK_n(R[G])\To \{\NK_n(R[G]/p^{r})\}_r.
\]
 To appeal to Lemma \ref{lemma_app} to complete the proof, we must check that the NK-groups appearing in the pro-abelian groups on the left and right are annihilated by a power of $p$ (which is allowed to depend on $r$). Applying Theorem \ref{GH_app} to $M_G/p^r\otimes_\bbZ R$, to $R[G]/p^r$, and to their polynomial algebras, the problem then reduces to showing that each $\NK$-group of $(M_G/p^r\otimes_\bbZ R)_{\rm red}$ and $(R[G]/p^r)_{\rm red}$ is annihilated by some power of $p$. But the latter two reduced rings are respectively equal to $(R/p)^{m+1}$ and $R/p$, which are regular, so their NK-groups vanish.
\end{proof}

The previous proof consisted of three steps: (1) use pro-excision to reduce to rings in which $p$ was nilpotent; (2) use Theorem \ref{GH_app} to pass to $\bbF_p$-algebras; (3) get lucky that the $\bbF_p$-algebras $(M_G/p^r\otimes_\bbZ R)_{\rm red}$ and $(R[G]/p^r)_{\rm red}$ were regular and so did not contribute any NK-groups to the problem.

Theorem \ref{thm2_app} below records the strongest general form of Conjecture \ref{conjecture_app} which I know at present how to prove. It is obtained by replacing pro-excision by pro-cdh descent and the good fortune of step (3) by the next theorem.

\begin{theorem}\label{theorem_app_EM}
Let $X$ be a quasi-excellent Noetherian $\bbF_p$-scheme. Then, for any $n\in\bbZ$, the group $\NK_n(X)$ is annihilated by a power of $p$.
\end{theorem}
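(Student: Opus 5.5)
The plan is to reduce to the case of a reduced scheme via nilpotent invariance up to $p$-power torsion, and then to use the cdh-local nature of $\NK$ for $\bbF_p$-schemes together with resolution-type results. We proceed in three steps.

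\textbf{Step 1: reduce to affine and reduced schemes.} Since $\NK_n$ satisfies Zariski descent (Thomason--Trobaugh), a finite Mayer--Vietoris argument over an affine cover reduces us to $X=\op{Spec}A$ with $A$ quasi-excellent and Noetherian over $\bbF_p$. Exponents add along long exact sequences, and the cover is finite, so a uniform bound survives.

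Let $N$ be the nilradical of $A$. It is nilpotent, and $N[t]$ is nilpotent in $A[t]$. Applying Theorem \ref{GH_app} to $(A,N)$ and to $(A[t],N[t])$ shows that the relative groups are killed by a power of $p$. Hence $\NK_n(A)\to\NK_n(A_{\rm red})$ has kernel and cokernel annihilated by powers of $p$, and we may assume $A$ is reduced.

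\textbf{Step 2: use cdh-descent for $\NK$ in characteristic $p$.} Here I would invoke the results of Kerz--Strunk--Tamme (or Elmanto--Morrow) on cdh descent. After inverting $p$, or more precisely up to bounded $p$-power torsion, $\RK$-theory of $\bbF_p$-schemes agrees with its cdh-sheafification; concretely, $\RK\to L_{\rm cdh}\RK$ has fiber controlled by $\mathrm{TC}$ or $\mathrm{HH}$-type invariants. Since $L_{\rm cdh}\RK=KH$ is $\bbA^1$-invariant, we get $\mathrm{N}KH=0$. It therefore suffices to show that the fiber $\mathcal F:=\op{Fib}(\RK\to KH)$ has $\mathrm N\mathcal F_n$ of bounded $p$-exponent.

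For the characteristic $p$ case, I would use the fact that $\RK/p\simeq$ (fiber of $\RK$) is cdh-local up to $\RK^{\rm inv}$ pieces identified with $\mathrm{TC}$. By Geisser--Hesselholt, and using quasi-excellence to have alterations or pro-cdh descent (Kerz--Strunk--Tamme pro-cdh descent for abstract blowups) available, the fiber $\op{Fib}(\mathrm{TC}\to L_{\rm cdh}\mathrm{TC})$ is expressed through de Rham--Witt type terms. On $\bbA^1$-directions these are $W_s$-modules with $s$ bounded in each degree, exactly as in the proof of Proposition \ref{lem:dumbbound}.

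\textbf{Step 3: bound the exponents.} I would first handle the $p$-adic part: the $p$-completion of $\NK$ is $\mathrm N\mathrm{TC}$ up to the cdh-local part. I would compute $\mathrm N\mathrm{TC}_n$ via $\mathrm{N}W_s\Omega$ in degrees $\le n+1$, with $s$ bounded in terms of $n$, so that it is killed by $p^{c(n)}$.

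I would then handle the rational and prime-to-$p$ parts. Weibel's theorem shows $\NK_n(A)$ is $p$-primary torsion, because $\NK_n(A)$ is a $W(\bbF_p)$-module and $\RK$ of $\bbF_p$-algebras with $p$ inverted is $\bbA^1$-invariant up to $\NK$ being a $p$-group. Combining this with the $p$-completion bound, a $p$-group whose derived $p$-completion has bounded exponent and which is torsion itself has bounded exponent.

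\textbf{Main obstacle.} The main difficulty is Step 2: making precise that $\op{Fib}(\RK\to KH)$ for quasi-excellent $\bbF_p$-schemes is controlled by truncated de Rham--Witt data whose polynomial-direction part has exponent bounded in each degree. This bound must hold uniformly, not just degreewise in $s$. I would try first to use Elmanto--Morrow's motivic filtration on $\RK$ of $\bbF_p$-schemes, where $\op{gr}^j$ of the cdh-fiber is $W\Omega^j_{\log}$-type syntomic cohomology $\mathbb Z_p(j)$. The $\mathrm N$-part of each graded piece should then be killed by a fixed power of $p$ depending on $j$, and the filtration is finite in a given degree.
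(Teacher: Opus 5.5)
There is a genuine gap, and it sits where you flagged it. Steps 2--3 rest on the claim that for an arbitrary quasi-excellent Noetherian $\bbF_p$-algebra $A$, the nil part of $\mathrm{TC}$ (or of the cdh-fiber of $\RK$) is computed by relative de Rham--Witt groups $W_s\Omega^{*}$ with $s$ bounded in terms of $n$. No such description exists in this generality.

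The bounded-$s$ computation behind Proposition \ref{lem:dumbbound} is Hesselholt's theorem for the special algebras $B[x]/(x^e)$ with $B$ \emph{regular}. For singular $A$, the graded pieces of the motivic filtration are syntomic complexes built from derived crystalline cohomology, not $W\Omega^j_{\log}$-cohomology. Nothing known gives an explicit exponent for their $\bbA^1$-nil parts. Saying that ``the $\mathrm N$-part of each graded piece should be killed by a fixed power of $p$'' restates the theorem rather than proving it.

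There is also a structural obstruction to your cdh strategy. To pass from vanishing of $\NK$ (or of the nil part of $\op{Fib}(\RK\to\mathrm{TC})$) on regular schemes to vanishing on all quasi-excellent schemes via cdh descent, you would need resolution of singularities, which is not available in characteristic $p$. Alterations are h-covers, not cdh-covers. Since their degrees may be divisible by $p$, they give descent for $\RK$ only after rationalizing.

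The missing idea, which is how the paper argues, is to avoid any explicit computation:
\begin{enumerate}
\item $\RK(-;\bbQ_p)$ ($p$-completed $\RK$-theory with $p$ inverted) is an h-sheaf on qcqs $\bbF_p$-schemes, by Elmanto--Morrow. Hence $\NK(-;\bbQ_p)$ is an h-sheaf.
\item $\NK(-;\bbQ_p)$ vanishes on regular Noetherian schemes, so by alterations it vanishes on every quasi-excellent Noetherian $\bbF_p$-scheme.
\item Combined with Weibel's result $\NK(X)[1/p]=0$, this gives $\NK(X)\simeq\NK(X;\bbZ_p)$. So each $\NK_n(X)$ is derived $p$-complete and $p$-power torsion.
\item A derived $p$-complete, $p$-power torsion group has bounded exponent, by Bhatt's result.
\end{enumerate}

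Your closing algebraic observation, that a $p$-power torsion group with bounded derived $p$-completion is bounded, resembles that last step. But you feed it an input you cannot produce: an explicit bound on the homotopy of $\NK(X)^\wedge_p$. The paper needs only the qualitative fact that $\NK_n(X)$ is already derived $p$-complete. Your Step 1 reductions (Zariski descent, and nil-invariance via Theorem \ref{GH_app}) are correct but unnecessary.
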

\begin{proof}
Let $\NK:=\op{fib}(\RK(-)\to\RK(\bbA^1_-))$ as a presheaf of spectra on qcqs $\bb F_p$-schemes. We need the non-trivial fact that $\RK(-;\bbQ_p)$ (i.e., $p$-complete K-theory then invert $p$) is an h-sheaf on qcqs $\bbF_p$-schemes: see the proof of the implication (1)\, $\Rightarrow\, $(2) of 
\cite[Lem.~4.25]{ElmantoMorrow}. Therefore $\RK(\bbA_-^1;\bbQ_p)$ and $\NK(-;\bbQ_p)$ are also $h$-sheaves on qcqs schemes. But $\NK(-;\bbQ_p)$ vanishes on regular Noetherian schemes (as $\NK$ does); so it vanishes on all quasi-excellent Noetherian $\bbF_p$-schemes by the existence of alterations.

For any quasi-excellent Noetherian $\bb F_p$-scheme $X$, we now know that both $\NK(X)[1/p]$ and $\NK(X;\bbQ_p)$ vanish: the former by Weibel \cite[Corol.~3.3]{WMV} and the latter by the previous paragraph. It follows that \[\NK(X)\quis\NK(X;\bbZ_p),\] i.e., the spectrum $\NK(X)$ is $p$-complete. Each group $\NK_n(X)$ is therefore simultaneously derived $p$-complete and $p$-power torsion (again by Weibel), whence a result of Bhatt \cite{Bhatt_Small} states it is annihilated by a power of $p$.
\end{proof}

The following resolves Conjecture \ref{conjecture_app} assuming quasi-excellence and a version of resolution of singularities:

\begin{theorem}\label{thm2_app}
Let $X$ be a Noetherian scheme such that $X\otimes_\bbZ\bbF_p$ is quasi-excellent; assume also there exist a regular scheme $X'$ and a proper morphism $X'\to X$ inducing an isomorphism $X'\otimes_{\bbZ}\bbZ[1/p]\xrightarrow{\sim} X\otimes_{\bbZ}\bbZ[1/p]$ (in particular, $X\otimes_\bbZ\bbZ[1/p]$ is regular). Then, for any $n\in\bbZ$, the group $\NK_n(X)$ is annihilated by a power of $p$.
\end{theorem}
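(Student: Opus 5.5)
Let $\pi\colon X'\to X$ be the given proper morphism from a regular scheme, an isomorphism away from $p$. Let $Y\subset X$ be the closed subscheme cut out by $p$ (or any closed subscheme with support $X\otimes\BF_p$ outside of which $\pi$ is an isomorphism), and let $Y'=X'\times_X Y$. The first step is pro-cdh descent for $\RK$-theory of abstract blow-up squares of Noetherian schemes, in the form of Kerz--Strunk--Tamme / Morrow. Applied to $\bbA^1$ of the square $(X',Y',Y,X)$ and to the square itself, and then passing to $\NK$ by taking fibres, it gives a long exact Mayer--Vietoris sequence of pro-abelian groups
\[
\cdots\to\{\NK_{n+1}(rY')\}_r\to \NK_n(X)\to \NK_n(X')\oplus\{\NK_n(rY)\}_r\to\{\NK_n(rY')\}_r\to\cdots,
\]
where $rY$ and $rY'$ denote the $r$-th infinitesimal thickenings, i.e. the closed subschemes cut out by $p^r$. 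Since $X'$ is regular Noetherian, $\NK_n(X')=0$ by the homotopy invariance theorem. We are left with an exact sequence
\[
\{\NK_{n+1}(rY')\}_r\to\NK_n(X)\to\{\NK_n(rY)\}_r
\]
in $\op{Pro}\mathrm{Ab}$. This is exactly the shape needed to apply Lemma \ref{lemma_app}.

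By that lemma, it then suffices to show that for each fixed $r$ the groups $\NK_n(rY)$ and $\NK_{n+1}(rY')$ are annihilated by some power of $p$, which may depend on $r$. Both $rY$ and $rY'$ are Noetherian schemes on which $p$ is nilpotent. We pass to their reductions, or to $Y=X\otimes\BF_p$ and $Y'=X'\otimes\BF_p$, which are $\BF_p$-schemes, using Theorem \ref{GH_app}. That theorem is stated for rings, so we globalise it by Zariski descent: $\RK$ and $\NK$ satisfy Zariski (Thomason--Trobaugh) descent, and relative $\RK$-theory for a nilpotent ideal of a $p$-nilpotent ring is $p$-power torsion of bounded exponent on affines. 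A finite affine cover, which exists since the schemes are Noetherian hence quasi-compact and with separated intersections, together with a Mayer--Vietoris or descent spectral sequence of finite length, turns bounded exponents on affines into a bounded exponent globally. The same argument applies to the polynomial extensions $\bbA^1_{rY}$ and $\bbA^1_{rY'}$, so $\NK_*(rY)$ differs from $\NK_*(Y_{\rm red})$ by groups of bounded $p$-power exponent. Here $Y$, a closed subscheme of a Noetherian scheme, satisfies the quasi-excellence hypothesis by assumption. The scheme $Y'$ is of finite type over the quasi-excellent $Y$, since $\pi$ is proper and hence of finite type, so $Y'$ is quasi-excellent too. Theorem \ref{theorem_app_EM} therefore applies to $Y_{\rm red}$ and $Y'_{\rm red}$, so their $\NK$-groups are annihilated by powers of $p$. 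Combining these by the five-lemma-type argument for extensions of bounded-exponent groups finishes the proof.

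The main obstacle is the first step: having pro-cdh descent available in the generality of an arbitrary proper morphism that is an isomorphism off a closed subscheme, with $X$ only Noetherian. This is a pro-version of the Kerz--Strunk--Tamme theorem (pro-cdh descent for abstract blow-ups of Noetherian schemes), and I would cite it as such. We also need it to commute with $\bbA^1\times-$, which holds because $\bbA^1$ of an abstract blow-up square is again one. A secondary point is the globalisation of Theorem \ref{GH_app} from affines to schemes; the finite-cover argument handles it, since exponents multiply over finitely many steps.
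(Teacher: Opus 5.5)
Your proposal is correct and follows the paper's proof essentially step for step. Both arguments use pro-cdh descent (Kerz--Strunk--Tamme) for the abstract blow-up square $X'\otimes\BF_p\to X'$ over $X\otimes\BF_p\to X$ and its base change to $\bbA^1$, then the vanishing of $\NK_*(X')$ and Lemma \ref{lemma_app}, then Theorem \ref{GH_app} to reduce to $r=1$, and finally Theorem \ref{theorem_app_EM}. Your additional remarks fill in points the paper leaves implicit: Zariski descent over a finite affine cover globalizes Theorem \ref{GH_app} from rings to Noetherian schemes, and $X'\otimes\BF_p$ is quasi-excellent because it is of finite type over $X\otimes\BF_p$.
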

\begin{proof}
The cartesian square of schemes
\[\xymatrix{
X'\otimes_{\bbZ}\bbF_p\ar[r] \ar[d] &X'\ar[d]\\
X\otimes_{\bbZ}\bbF_p\ar[r]&X
}\]
is an abstract blow-up square by hypothesis, and so remains so after base changing to $\bb A_X^1$. Pro-cdh descent for algebraic $K$-theory \cite[Thm.~A]{KerzStrunkTamme_procdh} therefore implies that the bi-relative pro-abelian groups $\{\NK_n(X,X',X\otimes_{\bbZ}\bbZ/p^r\bbZ)\}_r$ vanish and that there is a resulting long exact Mayer--Vietoris sequence of pro-abelian groups
  \begin{multline}
\cdots \to \NK_n(X)\to \NK_n(X')\oplus \{\NK_n(X\otimes_{\bbZ}\bbZ/p^r\bbZ)\}_r\to\\
\to \{\NK_n(X'\otimes_{\bbZ}\bbZ/p^r\bbZ)\}_r\to\cdots. 
  \end{multline}
 The regularity of $X'$ implies that $\NK_n(X')=0$ for all $n$, and so we obtain exact sequences \[\{\NK_{n+1}(X'\otimes_{\bbZ}\bbZ/p^r\bbZ)\}_r\to \NK_n(X)\to \{\NK_n(X\otimes_{\bbZ}\bbZ/p^r\bbZ)\}_r.\] By Theorem \ref{GH_app} it is enough to prove that each of the groups $\NK_{n+1}(X'\otimes_{\bbZ}\bbZ/p^r\bbZ)$ and $\NK_n(X\otimes_{\bbZ}\bbZ/p^r\bbZ)$ is annihilated by a power of $p$. But this follows by combining Theorems \ref{GH_app} (to reduce to $r=1$) and \ref{theorem_app_EM}.
\end{proof}

For example, under the mild extra assumption that $R/p$ be quasi-excellent, the previous theorem recovers Theorem \ref{thm1_app} (take $X={\rm Spec}(R[G])$ and $X'={\rm Spec}(M_G\otimes_\bbZ R)$).

\end{document}